\documentclass[11pt]{article}

\usepackage[T1]{fontenc}
\usepackage[utf8]{inputenc}
\usepackage{lmodern}
\usepackage{microtype}
\usepackage[margin=1.1in]{geometry}

\usepackage{amsmath,amssymb,amsfonts,amsthm,mathtools}
\usepackage{bm}
\usepackage{enumitem}
\usepackage{booktabs}
\usepackage{array}
\usepackage{xcolor}

\usepackage[colorlinks=true,linkcolor=blue!45!black,citecolor=blue!45!black,urlcolor=blue!45!black]{hyperref}

\makeatletter
\@ifundefined{theorem}{\newtheorem{theorem}{Theorem}[section]}{}
\@ifundefined{proposition}{\newtheorem{proposition}[theorem]{Proposition}}{}
\@ifundefined{lemma}{\newtheorem{lemma}[theorem]{Lemma}}{}
\@ifundefined{corollary}{\newtheorem{corollary}[theorem]{Corollary}}{}
\@ifundefined{definition}{\newtheorem{definition}[theorem]{Definition}}{}
\@ifundefined{assumption}{}{}
\@ifundefined{remark}{\newtheorem{remark}[theorem]{Remark}}{}
\@ifundefined{convention}{}{}
\@ifundefined{notation}{}{}
\makeatother

\providecommand{\R}{\mathbb{R}}
\providecommand{\C}{\mathbb{C}}
\providecommand{\Z}{\mathbb{Z}}
\providecommand{\N}{\mathbb{N}}

\providecommand{\ii}{\mathrm{i}}
\providecommand{\ee}{\mathrm{e}}
\providecommand{\dd}{\,\mathrm{d}}
\providecommand{\eps}{\varepsilon}
\providecommand{\hbarc}{\hbar c}

\providecommand{\E}{\mathbb{E}}
\providecommand{\Tr}{\operatorname{Tr}}
\providecommand{\Trphys}{\operatorname{Tr}_{\mathrm{phys}}}
\providecommand{\FP}{\operatorname{FP}}

\providecommand{\Dom}{\operatorname{Dom}}

\providecommand{\Ker}{\operatorname{Ker}}

\providecommand{\curl}{\operatorname{curl}}

\providecommand{\abs}[1]{\left\lvert #1\right\rvert}
\providecommand{\norm}[1]{\left\lVert #1\right\rVert}
\providecommand{\inner}[2]{\left\langle #1,#2\right\rangle}

\providecommand{\set}[1]{\left\{#1\right\}}
\providecommand{\paren}[1]{\left(#1\right)}
\providecommand{\brac}[1]{\left[#1\right]}

\providecommand{\OmegaLa}{\Omega_{L,a}}
\providecommand{\TtwoL}{T_L^2}
\providecommand{\Aarea}{A}
\providecommand{\Hphys}{\mathcal H_{\mathrm{phys}}}
\providecommand{\HPEC}{\mathcal H_{\mathrm{PEC}}}
\providecommand{\VPEC}{\mathcal V_{\mathrm{PEC}}}
\providecommand{\Vphys}{\mathcal V_{\mathrm{phys}}}
\providecommand{\Hcurl}{H(\operatorname{curl})}
\providecommand{\Hzcurl}{H_0(\operatorname{curl})}

\providecommand{\hzero}{h_0}
\providecommand{\Pzero}{P_0}

\providecommand{\qPEC}{q_{\mathrm{PEC}}}
\providecommand{\qMx}{q_{\mathrm{Mx}}}
\providecommand{\LPEC}{\mathcal L_{\mathrm{PEC}}}
\providecommand{\LMx}{\mathcal L_{\mathrm{Mx}}}
\providecommand{\Lamb}{\mathcal L_{\mathcal M}}
\providecommand{\VMx}{\mathcal V_{\mathrm{Mx}}}
\providecommand{\LD}{L_{\mathrm D}}
\providecommand{\LN}{L_{\mathrm N}}
\providecommand{\LNp}{L_{\mathrm N}^{\prime}}

\providecommand{\lambdamn}{\lambda_{m,n}}
\providecommand{\phim}{\phi_m}
\providecommand{\sn}{s_n}
\providecommand{\cn}{c_n}
\providecommand{\khat}{\widehat{k}}
\providecommand{\khatperp}{\widehat{k}^{\perp}}

\providecommand{\XiNoise}{\Xi}
\providecommand{\Sigmatau}{\Sigma_{\tau}}
\providecommand{\Ctau}{C_{\tau}}
\providecommand{\UtauMx}{U_{\tau}^{\mathrm{Mx}}}
\providecommand{\EDtau}{E_{D,\tau}}
\providecommand{\ENtau}{E_{N,\tau}^{\prime}}
\providecommand{\Btau}{B_{\tau}}

\numberwithin{equation}{section}
\allowdisplaybreaks

\setlist[itemize]{leftmargin=2.2em}
\setlist[enumerate]{leftmargin=2.2em}

\title{A Maxwell Quadratic-Form Representation of the Parallel-Plate Casimir Trace from Codimension-Three Riesz Reduction}
\author{
Irshadullah Khan\thanks{\url{https://www.researchgate.net/profile/Irshadullah_Khan/research}}\\
Department of Mathematics\\
Quaid-i-Azam University\thanks{Visiting Faculty under the UNDP TOKTEN programme.}\\
Islamabad, Pakistan\\
\texttt{irshadk2@gmail.com}
\and
\href{https://orcid.org/0000-0001-8823-6752}{Bilal Khan}\thanks{\url{https://engineering.lehigh.edu/faculty/bilal-khan}}\\
Department of Computer Science\\
Lehigh University\\
Bethlehem, PA, USA\\
\texttt{bik221@lehigh.edu}
}
\date{}

\begin{document}

\maketitle

\begin{abstract}
We formulate a Maxwell version of the codimension-three Riesz/Gaussian
quadratic-form representation for perfectly conducting parallel plates.  This
paper is the Maxwell follow-up to the scalar codimension-three
Riesz/Gaussian representation theorem of \cite{KhanKhan2026Scalar}: the same
transverse Riesz reduction and prescribed-covariance quadratic-form mechanism
are carried over here to the physical parallel-plate Maxwell operator.  The
construction is carried out in finite lateral volume
\(\Omega_{L,a}=T_L^2\times[0,a]\), using the physical electric-field Hilbert
space of divergence-free fields satisfying the perfect-conductor tangential
condition \(n\times E=0\), with the static normal zero mode removed.  The
Maxwell curl--curl operator is defined by its closed quadratic form, and an
explicit Fourier-domain analysis proves the finite-volume spectral gap,
compact resolvent, and heat-trace admissibility needed for the stochastic
construction.

For this reduced Maxwell operator \(\mathcal L_{\mathrm{Mx}}\), the
codimension-three Riesz integral gives the transversely reduced Riesz mediator
\(g\mathcal L_{\mathrm{Mx}}^{-1}\).  A prescribed heat-regularized Gaussian
source with covariance
\((\hbar c/g)\mathcal L_{\mathrm{Mx}}^{3/2}e^{-\tau\mathcal L_{\mathrm{Mx}}}\)
then has expected quadratic Green energy equal to the heat-regularized physical
Maxwell trace.  The finite-volume trace is shown to be spectrally equivalent
to a scalar Dirichlet channel plus a scalar Neumann channel with its constant
zero mode removed.  Under the standard parallel-plate interaction finite-part
prescription, the large-area energy density is
\[
    -\frac{\pi^2\hbar c}{720a^3}.
\]
The result is a representation theorem for the Maxwell parallel-plate trace
under a prescribed covariance in the flat-plate geometry considered here.
\end{abstract}

\tableofcontents

\section{Introduction, scope, and main theorem}
\label{sec:introduction-scope-theorem}

The scalar Riesz/Gaussian construction represents a heat-regularized scalar
Casimir trace as the expectation of a quadratic form.  Its operator-theoretic
core is simple: in codimension three, transverse reduction of the fractional
product operator \((L+\abs q^2)^{-5/2}\) produces the Green multiplier
\(L^{-1}\), up to the constant \((6\pi^2)^{-1}\).  If a heat-regularized
Gaussian source is then assigned covariance proportional to
\(L^{3/2}e^{-\tau L}\), pairing this covariance with the induced Green
operator produces the trace multiplier \(L^{1/2}e^{-\tau L}\).
The companion scalar preprint \cite{KhanKhan2026Scalar} established this
mechanism for positive scalar operators and, in the Dirichlet parallel-plate
benchmark, recovered the standard one-channel scalar interaction finite part.
The present paper asks whether the same representation survives for the true
Maxwell plate operator, where divergence constraints, boundary traces, zero
modes, and physical polarizations must all be handled explicitly.

The purpose of the present paper is to carry out the first Maxwell extension of
that representation in the one geometry where all boundary and polarization
issues can be handled explicitly: two perfectly conducting parallel plates.  We
replace the scalar plate operator by the physical Maxwell curl--curl operator
on divergence-free electric fields satisfying the perfect-conductor tangential
boundary condition.  The construction is performed first in finite lateral
volume,
\[
    \OmegaLa=\TtwoL\times[0,a],
    \qquad \Aarea=L^2,
\]
with periodic lateral directions and conducting plates at \(z=0\) and
\(z=a\).  The large-area limit is taken only after the heat-regularized trace
density has been formed.  We use the term \emph{transverse reduction} for the
codimension-three operator-valued integral; no additional geometric brane
dynamics is involved.

There are two reasons for this finite-volume formulation.  First, the Maxwell
operator has a static normal zero mode, which must be removed before a reduced
inverse can be used.  Second, after that removal the finite-volume operator has
a positive spectral gap, whereas the gap collapses as \(L\to\infty\) because of
the zero-vertical-mode branch.  Thus the inverse-operator identities below are
finite-volume statements.  The infinite-area result is an interaction
finite-part statement for traces per unit area.

\subsection{Scope}
\label{subsec:scope}

The paper proves a representation theorem for the reduced physical Maxwell
operator in the parallel-plate geometry.  It does not derive the electromagnetic
field from a microscopic quantum field theory, and it does not derive the
Gaussian covariance from a photon path integral, a matter loop, or a QED
effective action.  The covariance is prescribed, just as in the scalar
representation theorem, and the result records the quadratic-form identity that
follows from that prescribed covariance.
The overlap with \cite{KhanKhan2026Scalar} is therefore the operator-theoretic
Riesz/Gaussian mechanism and the use of a prescribed heat-regularized
covariance.  The extension proved here is the passage from that scalar
framework to the reduced physical Maxwell operator with perfect-conductor
boundary conditions.  The plate-scale calibration and cube-extremality
discussion from the scalar preprint are not used in the present paper.

The ambient product operator used below is also a mathematical Riesz-type
construction.  It is not asserted to be the ordinary Green operator of a local
six-dimensional Maxwell theory.  Its role is to provide a codimension-three
spectral reduction whose multiplier after transverse reduction is the Maxwell Green
operator \(\LMx^{-1}\) on the reduced physical Hilbert space.

The paper also does not address non-planar conductors, spherical shells, cube
reference-cell calibration, loop corrections, running couplings, or any
fundamental-constant interpretation.  The single physical benchmark evaluated
here is the standard parallel-plate interaction finite part for perfect
conductors \cite{Casimir1948,Bordag2009}.

\subsection{Strategy}
\label{subsec:introduction-strategy}

The proof is organized around five gates.
The first four gates parallel the scalar construction of
\cite{KhanKhan2026Scalar} at the level of spectral calculus and prescribed
covariance.  The genuinely new gate is the fifth: the Maxwell trace must be
identified from the physical mode structure itself, not by any scalar doubling
shortcut.

First, the Maxwell operator is defined without using a vector potential.  The
primary Hilbert-space variable is the electric field \(E\).  Perfect-conductor
boundary conditions are imposed as
\[
    n\times E=0
    \qquad\text{on the plates},
\]
and the physical transverse condition is
\[
    \nabla\cdot E=0 .
\]
The form domain is
\[
    \VPEC
    =
    \set{E\in\Hzcurl(\OmegaLa;\C^3):
          \nabla\cdot E=0\text{ in }\mathcal D'(\OmegaLa)} .
\]
The unreduced Hilbert space is \(\HPEC=\overline{\VPEC}^{\,L^2}\).  The
unique static normal zero mode is
\[
    \hzero=(L^2a)^{-1/2}e_z,
\]
and the physical Hilbert space is
\[
    \Hphys=\HPEC\ominus\operatorname{span}\{\hzero\} .
\]
The reduced Maxwell operator \(\LMx\) is the positive self-adjoint operator
associated with
\[
    \qMx[E,F]
    =
    \int_{\OmegaLa}
    (\nabla\times E)\cdot\overline{(\nabla\times F)}\,\dd x,
    \qquad E,F\in\Vphys .
\]
This form-first definition prevents ambiguity about the operator domain.

Second, the form domain is characterized by an explicit mixed sine/cosine
Fourier expansion.  The tangential components use sine modes in the plate
coordinate and the normal component uses cosine modes.  The divergence-free
condition becomes the coefficient constraint
\[
    \ii k_m\cdot a_{m,n}-\nu_n b_{m,n}=0
    \qquad(n\geq1),
\]
and the reduced physical space is obtained by imposing \(b_{0,0}=0\).  This
coefficient-domain lemma ties the subsequent mode calculation to the actual
closed Maxwell form domain.

Third, with the reduced finite-volume operator in hand, the codimension-three
Riesz reduction is applied by spectral calculus:
\[
    \int_{\R^3}\frac{\dd^3q}{(2\pi)^3}
    (\LMx+\abs q^2)^{-5/2}
    =
    \frac{1}{6\pi^2}\LMx^{-1} .
\]
Thus, with \(g=\kappa/(6\pi^2)\), the transversely reduced Maxwell Riesz
mediator is \( \VMx=g\LMx^{-1}\).
Fourth, a heat-regularized Gaussian source is prescribed on \(\Hphys\):
\[
    \Sigmatau
    =
    \paren{\frac{\hbarc}{g}}^{1/2}
    \LMx^{3/4}e^{-\tau\LMx/2}\XiNoise .
\]
Its covariance is
\[
    \Ctau
    =
    \frac{\hbarc}{g}\LMx^{3/2}e^{-\tau\LMx} .
\]
The heat-trace estimates proved from the finite-volume spectrum imply that
\(\Sigmatau\) is an honest \(\Hphys\)-valued Gaussian random variable for every
\(\tau>0\).  Consequently,
\[
    \E\left[
        \frac12\inner{\Sigmatau}{g\LMx^{-1}\Sigmatau}_{\Hphys}
    \right]
    =
    \frac{\hbarc}{2}
    \Trphys\paren{\LMx^{1/2}e^{-\tau\LMx}} .
\]

Fifth, the physical Maxwell trace is evaluated.  The TE/TM analysis is not
implemented by inserting a scalar factor of two.  Instead one proves the
spectral equivalence
\[
    \LMx\simeq \LD\oplus\LNp,
\]
where \(\LD\) is the scalar Dirichlet Laplacian and \(\LNp\) is the scalar
Neumann Laplacian with the constant zero mode removed.  Thus
\[
    \Trphys f(\LMx)
    =
    \Tr f(\LD)+\Tr' f(\LN)
\]
for Borel functions \(f\) for which these traces are finite.  The remaining
zero-vertical Neumann branch is
retained in the regulated trace and removed only at the interaction
finite-part stage because it is independent of the plate separation \(a\).

\subsection{Main theorem}
\label{subsec:main-theorem}

The following theorem is the result assembled by the technical sections.

\begin{theorem}[Maxwell parallel-plate Riesz/Gaussian representation]
\label{thm:main-maxwell-representation}
Let
\[
    \OmegaLa=\TtwoL\times[0,a]
\]
with periodic lateral directions and perfectly conducting plates at
\(z=0,a\).  Let \(\LMx\) be the reduced perfect-conductor Maxwell operator on
\(\Hphys\), defined by the closed form
\[
    \qMx[E,F]
    =
    \int_{\OmegaLa}
    (\nabla\times E)\cdot\overline{(\nabla\times F)}\,\dd x,
    \qquad E,F\in\Vphys,
\]
where \(\Vphys\) consists of divergence-free electric fields satisfying
\(n\times E=0\) on the plates, after removing the static normal mode
\(\hzero=(L^2a)^{-1/2}e_z\).  Then, for finite \(L\), \(\LMx\) is positive
self-adjoint on \(\Hphys\), has compact resolvent, and satisfies
\[
    \LMx\geq
    \ell_{L,a}I,
    \qquad
    \ell_{L,a}:=
    \min\set{\paren{\frac{2\pi}{L}}^2,
              \paren{\frac{\pi}{a}}^2}.
\]
Moreover,
\[
    \Tr\paren{\LMx^p e^{-\tau\LMx}}<\infty
    \qquad(\tau>0,\ p\geq0).
\]

Define
\[
    \VMx
    :=
    \kappa
    \int_{\R^3}\frac{\dd^3q}{(2\pi)^3}
    (\LMx+\abs q^2)^{-5/2},
    \qquad
    g:=\frac{\kappa}{6\pi^2} .
\]
Then
\[
    \VMx=g\LMx^{-1}
\]
as a bounded operator on \(\Hphys\) for finite \(L\).

Let \(\XiNoise\) be cylindrical white noise on \(\Hphys\).  For \(\tau>0\), set
\[
    \Sigmatau
    :=
    \paren{\frac{\hbarc}{g}}^{1/2}
    \LMx^{3/4}e^{-\tau\LMx/2}\XiNoise .
\]
Then \(\Sigmatau\) is an \(\Hphys\)-valued Gaussian random variable with
covariance
\[
    \Ctau
    =
    \frac{\hbarc}{g}\LMx^{3/2}e^{-\tau\LMx} .
\]
For
\[
    \UtauMx
    :=
    \frac12\inner{\Sigmatau}{g\LMx^{-1}\Sigmatau}_{\Hphys},
\]
one has the regulated identity
\[
    \E[\UtauMx]
    =
    \frac{\hbarc}{2}
    \Trphys\paren{\LMx^{1/2}e^{-\tau\LMx}} .
\]
In the large-area limit below, \(\UtauMx=\UtauMx(L,a)\) denotes the
finite-volume quadratic energy associated with \(\Omega_{L,a}\).
Furthermore, the Maxwell trace decomposes as
\[
    \Trphys f(\LMx)=\Tr f(\LD)+\Tr' f(\LN)
    =\Tr f(\LD)+\Tr f(\LNp)
\]
for every Borel function \(f\) for which these traces are finite.  Here
\(\LD\) is the scalar Dirichlet Laplacian, \(\LN\) is the scalar Neumann
Laplacian, and \(\LNp\) is the restriction of \(\LN\) to the orthogonal
complement of its constant zero mode.  The prime on \(\Tr' f(\LN)\) removes
only the constant Neumann zero mode.  The remaining \(n=0,\ m\ne0\) Neumann
branch is retained in the finite-volume trace and contributes an
\(a\)-independent term to the large-area density.  Consequently, under the standard
parallel-plate interaction finite-part prescription,
\[
    \boxed{
    \FP_{\mathrm{int},\,\tau\to0^+}
    \left[
        \lim_{L\to\infty}\frac{1}{L^2}\E[\UtauMx(L,a)]
    \right]
    =
    -\frac{\pi^2\hbarc}{720a^3} .
    }
\]
\end{theorem}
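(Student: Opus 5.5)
The plan is to follow the five gates of \S\ref{subsec:introduction-strategy}, making each one a lemma and then assembling them. The first step is to diagonalize the form explicitly. On \(\OmegaLa\) I expand \(E\in\VPEC\) in the mixed basis: the tangential components in \(L^{-1}e^{\ii k_m\cdot x_\parallel}\sqrt{2/a}\sin(\nu_n z)\) and the normal component in the corresponding cosine modes, with \(k_m\in(2\pi/L)\Z^2\) and \(\nu_n=n\pi/a\). The condition \(n\times E=0\) is exactly what makes the sine expansion of the tangential part legitimate in \(\Hzcurl\). The divergence condition becomes \(\ii k_m\cdot a_{m,n}-\nu_n b_{m,n}=0\), and for \(n=0\) it gives \(k_m b_{m,0}=0\), hence \(b_{m,0}=0\) for \(m\neq0\). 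Removing \(\hzero\) then kills \(b_{0,0}\).

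For each remaining \((m,n)\) I solve the constraint explicitly:
\begin{itemize}
\item for \(n\ge1\) and \(m\ne0\), the allowed coefficient space is two-dimensional, spanned by a TE vector \((\khatperp,0)\) and a TM vector;
\item for \(n\ge1\) and \(m=0\), \(b_{0,n}=0\) and \(a_{0,n}\in\C^2\) is free;
\item for \(n=0\), the tangential sine modes vanish, so there is no contribution.
\end{itemize}
On each such vector the curl form equals \((\abs{k_m}^2+\nu_n^2)\) times the squared norm, using \(\nabla\times\nabla\times E=-\Delta E\) for divergence-free fields. I verify this directly in coefficients rather than through the operator identity, so that no domain issues arise.

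This gives an orthonormal eigenbasis with eigenvalues \(\lambdamn=\abs{k_m}^2+\nu_n^2\). The multiplicity count is as follows. For \(n\ge1\) the eigenvalue occurs twice: the TE mode matches the Dirichlet mode \(\sin\) and the TM mode matches the Neumann mode \(\cos\) with \(n\ge1\). In addition, each \(n=0\), \(m\neq0\) Neumann mode must be matched.

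I expect this matching to be the main obstacle. A naive count puts the TM modes at \(n\ge1\) only and leaves the Neumann \(n=0\), \(m\ne0\) branch unmatched. To fix this I would revisit the \(n=0\) cosine sector for the normal component, which has eigenvalue \(\abs{k_m}^2\) and represents \(E=b\,e_z e^{\ik x}\). I also need to redo the \(n=0\) divergence analysis carefully: \(\partial_z E_z=0\) there, so the constraint should not force \(b_{m,0}=0\). In that case the \(n=0\) normal modes survive, and only \(b_{0,0}\) is removed by the projection onto \(\hzero^\perp\). If so, the \((m,n)\)-indexed unitary map sending the TE mode to \(\sin\), the TM and \(n=0\) normal modes to \(\cos\), and the two \(m=0\) vectors to one Dirichlet and one Neumann mode yields \(\LMx\simeq\LD\oplus\LNp\). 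The trace formula then follows for any Borel \(f\) with finite traces. I will make sure the coefficient lemma is stated on the closed form domain, so that the form-defined operator is exactly this diagonal operator.

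With the spectrum explicit, the analytic claims are routine:
\begin{itemize}
\item positivity and self-adjointness come from closedness of the diagonal form;
\item the gap \(\ell_{L,a}\) is the smallest \(\lambdamn\) after the removals, attained either at \(\abs{k_m}=2\pi/L\), \(n=0\) or at \(m=0\), \(n=1\);
\item compact resolvent holds because \(\lambdamn\to\infty\) with finite multiplicities;
\item \(\Tr(\LMx^pe^{-\tau\LMx})<\infty\) follows by comparison with lattice Gaussian sums.
\end{itemize}

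The Riesz identity is scalar spectral calculus: \(\int_{\R^3}(\lambda+\abs q^2)^{-5/2}\dd^3q/(2\pi)^3=(6\pi^2)^{-1}\lambda^{-1}\) by a radial integral (a Beta function). The operator-valued statement follows by applying it eigenvalue by eigenvalue, with uniform bounds supplied by the gap.

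For the Gaussian part, \(\Sigmatau\) is \(\Hphys\)-valued because \(\LMx^{3/2}e^{-\tau\LMx}\) is trace class. Then
\[
\E\inner{\Sigmatau}{g\LMx^{-1}\Sigmatau}=g\Tr(\LMx^{-1}\Ctau)=\hbarc\,\Tr(\LMx^{1/2}e^{-\tau\LMx}).
\]

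Finally I take \(L\to\infty\). By the decomposition, \(L^{-2}\) times the trace becomes Riemann sums converging to
\[
\int\frac{\dd^2k}{(2\pi)^2}\Big[2\sum_{n\ge1}+1\Big]\sqrt{k^2+\nu_n^2}\,e^{-\tau(k^2+\nu_n^2)}.
\]
The \(n=0\) term is \(a\)-independent and is discarded by the interaction finite part. For the remaining sum I use Euler--Maclaurin or Poisson summation in \(n\): the \(a\)-linear bulk divergence is discarded, and the finite remainder is \(2\cdot(-\pi^2\hbarc/1440a^3)\), which is the \(\zeta(-3)\) value. This gives \(-\pi^2\hbarc/(720a^3)\).
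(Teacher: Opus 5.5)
Your plan follows the paper's route: the mixed sine/cosine coefficient lemma on the closed form domain, explicit TE, TM, $m=0$ and normal-branch bases, the same unitary map onto $\LD\oplus\LNp$, spectral calculus for the Riesz integral and the covariance trace, and removal of the $a$-independent $n=0$ branch only at the finite-part stage.

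One step, however, has to be corrected outright rather than hedged. Your opening claim that at $n=0$ the divergence constraint gives $k_m b_{m,0}=0$ is false, and so is the bullet saying the $n=0$ sector contributes nothing. For $E=b_{m,0}\phim(r)c_0(z)e_z$ one has $\nabla\cdot E=\partial_z E_z=b_{m,0}\phim c_0'=0$ identically, because the lateral derivatives in the divergence act only on $E_x,E_y$. The field is purely normal and hence lies in $\Hzcurl$, and $\norm{\nabla\times E}^2=\abs{k_m}^2\abs{b_{m,0}}^2$. So the ``if so'' in your later paragraph is unconditional. It is precisely the observation in the paper's coefficient lemma that $b_{m,0}$ never enters the divergence because $c_0'=0$. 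Only $b_{0,0}$ is removed, by projecting off $\hzero$.

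This is not cosmetic. With $b_{m,0}=0$ you would get $\LMx\simeq\LD\oplus\LD$ instead of $\LD\oplus\LNp$. For $L>2a$ the bottom of the spectrum would then be $(\pi/a)^2$ rather than the value $(2\pi/L)^2$ attained on the $n=0$ branch, as your gap bullet correctly says. Only the boxed finite part would survive, because the lost branch is $a$-independent.

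Relatedly, $n\times E=0$ is not what makes the sine expansion legitimate; that expansion exists for every $L^2$ field. What the condition buys is the right to read off the curl coefficients by testing against $\phim\cn e_x$ and $\phim\cn e_y$ without boundary terms, even though their cosine factor does not vanish at the plates. This works because $\int(\nabla\times E)\cdot\overline{\psi}=\int E\cdot\overline{\nabla\times\psi}$ holds for all smooth periodic $\psi$ when $E\in\Hzcurl$.

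The remaining differences are minor. You evaluate the $q$-integral as a radial Beta integral rather than through the Schwinger representation. You also extract the finite part by Euler--Maclaurin in $n$ rather than by Mellin inversion and zeta continuation, and that works. Write the large-area Dirichlet summand as $F(n)$ with $F(t)=\frac{1}{4\pi}\int_{(\pi t/a)^2}^{\infty}\sqrt{u}\,e^{-\tau u}\dd u$. Then:
\begin{itemize}
\item $\int_0^\infty F$ is the bulk $a\tau^{-2}$ divergence;
\item $-\tfrac12F(0)$ is the $a$-independent $\tau^{-3/2}$ surface term;
\item $\tfrac{1}{720}F'''(0)=-\pi^2/(720a^3)$ is the finite part, which after multiplying by $\hbarc/2$ gives $-\pi^2\hbarc/(1440a^3)$ per channel;
\item the higher Bernoulli terms and the remainder are $O(\tau)$.
\end{itemize}
This route is more elementary. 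The paper's Mellin argument gives the full pole structure and identifies the heat finite part directly with the zeta value $\mathcal E_D(0,a)$.
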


\begin{proof}[Proof roadmap]
Section~\ref{sec:maxwell-operator} defines the physical electric-field Hilbert
space and the reduced Maxwell operator by a closed quadratic form.
Section~\ref{sec:fourier-domain-heat-trace} proves the mixed Fourier
coefficient-domain lemma, identifies the zero mode, proves the finite-volume
spectral gap, and proves the heat-trace estimates.  Section~\ref{sec:riesz-gaussian-source}
proves the Hilbert-space Riesz reduction, constructs the mollified
transversely reduced Maxwell Riesz mediator, defines the heat-regularized Gaussian
source, and proves the regulated quadratic-form trace identity.
Section~\ref{sec:te-tm-decomposition} proves the spectral equivalence
\(\LMx\simeq\LD\oplus\LNp\).  Finally,
Section~\ref{sec:finite-part-plate-result} defines the standard interaction
finite part, removes the \(a\)-independent zero-vertical branch, and evaluates
the Dirichlet and reduced Neumann contributions.  The final displayed formula
is Theorem~\ref{thm:Maxwell-parallel-plate-finite-part}.
\end{proof}

\begin{remark}[Finite-volume discipline]
\label{rem:intro-finite-volume-discipline}
The theorem should be read in the order stated.  The identity
\(\VMx=g\LMx^{-1}\) uses the finite-volume spectral gap.  The large-area limit
is taken only after forming the heat-regularized trace density and applying the
standard interaction finite-part prescription.  No bounded inverse
\(\LMx^{-1}\) is asserted in the infinite-area limit.
\end{remark}

\begin{remark}[Prescribed covariance]
\label{rem:intro-prescribed-covariance}
The covariance
\[
    \Ctau=\frac{\hbarc}{g}\LMx^{3/2}e^{-\tau\LMx}
\]
is part of the data of the representation.  The paper proves the exact
quadratic-form identity that follows from this covariance.  It does not derive
this covariance from microscopic quantum electrodynamics.
\end{remark}
\section{The reduced Maxwell operator for perfectly conducting plates}
\label{sec:maxwell-operator}

This section defines the Maxwell operator used in the representation theorem.  The construction is deliberately finite-volume.  The slab is
\[
    \OmegaLa:=\TtwoL\times[0,a],
    \qquad
    \Aarea=L^2,
\]
with periodic lateral variables and two perfectly conducting plates at
\(z=0\) and \(z=a\).  We work over complex vector fields; the real theory is the corresponding real subspace.

The primary field variable is the electric field \(E\), not a vector potential.  This avoids any need to choose a gauge in the main proof.  Gauge-fixed vector-potential formulations may be compared with this construction later, but the operator below is defined directly on the physical transverse electric-field space.

\subsection{Perfect-conductor boundary conditions}
\label{subsec:pec-boundary}

For a perfect electric conductor, the physical boundary conditions for the electromagnetic fields are
\[
    n\times E=0,
    \qquad
    n\cdot B=0,
\]
on the conducting boundary.  For a nonzero time-harmonic source-free mode with time dependence \(e^{-\ii\omega t}\), Maxwell's equations give
\[
    B=\frac{1}{\ii\omega}\nabla\times E
\]
up to the conventional placement of the factor \(c\), and the electric field satisfies
\[
    \nabla\times\nabla\times E=\frac{\omega^2}{c^2}E,
    \qquad
    \nabla\cdot E=0 .
\]
Thus the spectral parameter of the positive operator below is \(\lambda=\omega^2/c^2\), and the zero-point energy contribution is \((\hbarc/2)\lambda^{1/2}\).

On the flat plates of \(\OmegaLa\), the outward unit normal is \(n=\pm e_z\).  Hence
\[
    n\times E=0
    \qquad\Longleftrightarrow\qquad
    E_x=E_y=0
    \quad\text{on }z=0,a .
\]
For smooth nonzero modes, this tangential electric condition also implies the normal magnetic condition on the flat plates.  Indeed,
\[
    n\cdot(\nabla\times E)=\partial_xE_y-\partial_yE_x
\]
on a plate.  Since the tangential trace \((E_x,E_y)\) vanishes there, its tangential derivatives vanish in the trace sense, and therefore \(n\cdot B=0\).  The form construction below takes \(n\times E=0\) as the essential boundary condition and imposes transversality by \(\nabla\cdot E=0\).

Equivalently, if \(E\) is identified with the one-form
\[
    E_x\,dx+E_y\,dy+E_z\,dz,
\]
then \(n\times E=0\) on the plates is the condition that the pullback of this
one-form to the boundary vanish.  In differential-form terminology this is the
relative trace condition for one-forms, namely the form-domain part of the
relative boundary condition.  We will not rely on that language in the proofs.

\subsection{The PEC form domain}
\label{subsec:pec-domain}

Let
\[
    \Hcurl(\OmegaLa;\C^3)
    :=
    \set{E\in L^2(\OmegaLa;\C^3):\nabla\times E\in L^2(\OmegaLa;\C^3)}
\]
with derivatives understood in the sense of distributions and with graph norm
\[
    \norm{E}_{\Hcurl}^2
    :=
    \norm{E}_{L^2}^2+\norm{\nabla\times E}_{L^2}^2 .
\]
Because the lateral variables live on the torus \(\TtwoL\), there are no lateral boundary faces.  We use the following convention throughout the paper:
\[
    \Hzcurl(\OmegaLa;\C^3)
\]
denotes the closure in the \(\Hcurl\)-graph norm of smooth lateral-periodic vector fields whose tangential components satisfy
\[
    E_x=E_y=0
    \quad\text{at }z=0,a .
\]
Thus \(\Hzcurl\) encodes zero tangential electric trace on the two conducting plates.
This is the standard \(H(\curl)\) framework for Maxwell boundary-value
problems with perfect-conductor trace conditions; see, for example,
\cite{Monk2003}.

Define the divergence-free perfect-conductor form domain by
\begin{equation}
\label{eq:VPEC-def}
    \VPEC
    :=
    \set{
        E\in \Hzcurl(\OmegaLa;\C^3):
        \nabla\cdot E=0\text{ in }\mathcal D'(\OmegaLa)
    } .
\end{equation}
Here \(\mathcal D'(\OmegaLa)\) is understood with lateral periodicity.  The divergence constraint is closed under \(L^2\)-convergence: if \(E_j\to E\) in \(L^2\) and \(\nabla\cdot E_j=0\), then for every smooth periodic test function \(\varphi\) compactly supported in the open interval \((0,a)\) in the plate variable,
\[
    \inner{\nabla\cdot E}{\varphi}
    =
    -\int_{\OmegaLa} E\cdot\nabla\overline{\varphi}\,\dd x
    =
    \lim_{j\to\infty}
    -\int_{\OmegaLa} E_j\cdot\nabla\overline{\varphi}\,\dd x
    =0 .
\]
Consequently \(\VPEC\) is a closed subspace of \(\Hzcurl(\OmegaLa;\C^3)\) in the \(\Hcurl\)-graph norm.

The unreduced PEC Hilbert space is the \(L^2\)-closure of this form domain:
\begin{equation}
\label{eq:HPEC-def}
    \HPEC:=\overline{\VPEC}^{\,L^2(\OmegaLa;\C^3)} .
\end{equation}
The boundary condition is imposed at the level of the form domain \(\VPEC\), not as a pointwise condition on arbitrary elements of \(\HPEC\).

\subsection{The static normal mode and the reduced physical space}
\label{subsec:static-mode-reduction}

The constant normal field
\[
    e_z=(0,0,1)
\]
satisfies
\[
    \nabla\times e_z=0,
    \qquad
    \nabla\cdot e_z=0,
    \qquad
    n\times e_z=0\text{ on }z=0,a .
\]
Hence it belongs to \(\VPEC\) and has zero Maxwell energy.  Its normalized version is
\begin{equation}
\label{eq:hzero-def}
    \hzero:=\abs{\OmegaLa}^{-1/2}e_z=(L^2a)^{-1/2}e_z .
\end{equation}
This is the uniform static electric field normal to the plates.  It represents a zero-frequency electrostatic sector, not a photon oscillator contributing to the Casimir trace.

We therefore remove this mode before defining the physical Maxwell operator.  Let
\begin{equation}
\label{eq:Pzero-def}
    \Pzero E:=\inner{E}{\hzero}_{L^2(\OmegaLa)}\hzero
\end{equation}
be the orthogonal projection onto \(\operatorname{span}\{\hzero\}\).  Define
\begin{equation}
\label{eq:Hphys-def}
    \Hphys:=\HPEC\ominus\operatorname{span}\{\hzero\}
    = (I-\Pzero)\HPEC
\end{equation}
and
\begin{equation}
\label{eq:Vphys-def}
    \Vphys:=\VPEC\cap\Hphys .
\end{equation}
The space \(\Vphys\) is dense in \(\Hphys\).  Indeed, if \(F\in\Hphys\), choose \(E_j\in\VPEC\) with \(E_j\to F\) in \(L^2\).  Since \(\hzero\in\VPEC\), each
\[
    E_j-\Pzero E_j
\]
belongs to \(\VPEC\cap\Hphys=\Vphys\), and
\[
    E_j-\Pzero E_j\longrightarrow F-\Pzero F=F
\]
in \(L^2\).

The next section proves, by an explicit mixed Fourier decomposition, that \(\operatorname{span}\{\hzero\}\) is the entire kernel of the unreduced finite-volume PEC Maxwell form.  This section only removes the known static normal mode and constructs the reduced operator.  Positivity with a finite-volume spectral gap is not asserted until after the Fourier-domain analysis.

\subsection{The closed Maxwell quadratic form}
\label{subsec:maxwell-form}

Define the unreduced perfect-conductor Maxwell form on \(\HPEC\) by
\begin{equation}
\label{eq:qPEC-def}
    \qPEC[E,F]
    :=
    \int_{\OmegaLa}
    (\nabla\times E)\cdot\overline{(\nabla\times F)}\,\dd x,
    \qquad
    E,F\in\VPEC .
\end{equation}
The form is nonnegative.  It is also closed: the norm
\[
    \norm{E}_{\qPEC}^2
    :=
    \norm{E}_{L^2}^2+\qPEC[E,E]
    =
    \norm{E}_{L^2}^2+\norm{\nabla\times E}_{L^2}^2
\]
is exactly the \(\Hcurl\)-graph norm on the closed subspace \(\VPEC\).  Since \(\VPEC\) is complete in this norm, \(\qPEC\) is a closed densely defined nonnegative form on \(\HPEC\).

Restrict this form to the reduced physical space:
\begin{equation}
\label{eq:qMx-def}
    \qMx[E,F]
    :=
    \qPEC[E,F]
    =
    \int_{\OmegaLa}
    (\nabla\times E)\cdot\overline{(\nabla\times F)}\,\dd x,
    \qquad
    E,F\in\Vphys .
\end{equation}
The subspace \(\Vphys\) is closed in \(\VPEC\) for the graph norm.  To see this, let \(E_j\in\Vphys\) converge to \(E\in\VPEC\) in the graph norm.  Then \(E_j\to E\) in \(L^2\), and since \(\Hphys\) is closed in \(L^2\), one has \(E\in\Hphys\).  Thus \(E\in\Vphys\).  Hence \(\qMx\) is closed, nonnegative, and densely defined on \(\Hphys\).

By the representation theorem for closed semibounded quadratic forms
\cite{Kato1995}, there is a unique nonnegative self-adjoint operator
\[
    \LMx
\]
on \(\Hphys\) such that
\begin{equation}
\label{eq:form-representation-LMx}
    \qMx[E,F]=\inner{\LMx E}{F}_{\Hphys}
    \qquad
    \text{for all }F\in\Vphys
\end{equation}
whenever \(E\in\Dom(\LMx)\).  Equivalently,
\begin{equation}
\label{eq:LMx-domain-formal}
    \Dom(\LMx)
    =
    \set{
        E\in\Vphys:
        \exists G\in\Hphys\text{ such that }
        \qMx[E,F]=\inner{G}{F}_{\Hphys}
        \text{ for all }F\in\Vphys
    },
\end{equation}
and then \(\LMx E:=G\).

For smooth fields in this operator domain, integration by parts gives the differential expression
\begin{equation}
\label{eq:LMx-curlcurl}
    \LMx E=\nabla\times\nabla\times E .
\end{equation}
Since the form domain imposes \(\nabla\cdot E=0\), this agrees on smooth fields with
\[
    \nabla\times\nabla\times E=-\Delta E .
\]
The boundary condition and transversality are not imposed as additional
pointwise assumptions on \(\Dom(\LMx)\).  They are encoded by the form domain
\(\Vphys\).

\begin{definition}[Reduced PEC Maxwell operator]
\label{def:reduced-pec-maxwell-operator}
The reduced perfect-conductor Maxwell operator on the finite slab \(\OmegaLa\) is the positive self-adjoint operator \(\LMx\) associated with the closed form \(\qMx\) in \eqref{eq:qMx-def} on the Hilbert space \(\Hphys\) in \eqref{eq:Hphys-def}.
\end{definition}

\begin{remark}[Why the electric-field formulation is used]
\label{rem:electric-field-no-gauge}
The construction does not introduce a vector potential and therefore does not require a gauge choice in the main proof.  The physical constraints are imposed directly on the electric field: zero tangential electric trace at the plates, distributional transversality, and removal of the static normal zero mode.  A vector-potential or covariant gauge-fixed formulation may be compared with this operator later, but it is not needed for the Riesz reduction or the stochastic trace identity.
\end{remark}

\begin{remark}[Next spectral step]
\label{rem:maxwell-operator-deferred}
The mixed sine/cosine Fourier description of \(\VPEC\) in the next section
identifies the zero mode and supplies the spectral gap, compact-resolvent, and
heat-trace properties used later in the paper.
\end{remark}
\section{Fourier description and heat-trace admissibility}
\label{sec:fourier-domain-heat-trace}

The preceding section defined \(\LMx\) by a closed quadratic form.  This section ties that abstract form domain to an explicit Fourier model on
\[
    \OmegaLa=\TtwoL\times[0,a].
\]
The result is the finite-volume spectral input needed for the Riesz and stochastic constructions: after removal of the static normal mode, \(\LMx\) has a positive spectral gap, compact resolvent, and finite heat traces
\[
    \Tr\paren{\LMx^p e^{-\tau\LMx}}<\infty
    \qquad(\tau>0,\ p\ge0).
\]
No large-area limit is taken in this section.

\subsection{Mixed Fourier bases}
\label{subsec:mixed-fourier-bases}

Write \(r=(x,y)\in\TtwoL\), and set
\begin{equation}
\label{eq:km-nun-defs}
    k_m:=\frac{2\pi}{L}m,
    \qquad
    m\in\Z^2,
    \qquad
    \nu_n:=\frac{\pi n}{a},
    \qquad
    n\in\N_0 .
\end{equation}
The normalized lateral Fourier modes are
\begin{equation}
\label{eq:phi-m-def}
    \phi_m(r):=\frac1L e^{\ii k_m\cdot r},
    \qquad m\in\Z^2 .
\end{equation}
For the plate variable we use the sine basis
\begin{equation}
\label{eq:sin-basis-def}
    s_n(z):=\sqrt{\frac2a}\sin(\nu_n z),
    \qquad n\ge1,
\end{equation}
and the cosine basis
\begin{equation}
\label{eq:cos-basis-def}
    c_0(z):=a^{-1/2},
    \qquad
    c_n(z):=\sqrt{\frac2a}\cos(\nu_n z),
    \qquad n\ge1 .
\end{equation}
Thus \(\{\phi_m s_n:m\in\Z^2,n\ge1\}\) is an orthonormal basis of \(L^2(\TtwoL\times(0,a))\), and so is \(\{\phi_m c_n:m\in\Z^2,n\ge0\}\).

The choice of sine modes for \(E_x,E_y\) is adapted to the perfect-conductor tangential condition
\[
    E_x=E_y=0\quad\text{on }z=0,a,
\]
whereas no Dirichlet condition is imposed on \(E_z\).  Hence an \(L^2\) vector field is written in the mixed form
\begin{equation}
\label{eq:mixed-expansion-tangential}
    E_{\parallel}(r,z)
    =
    \sum_{m\in\Z^2}\sum_{n\ge1}
    a_{m,n}\,\phi_m(r)s_n(z),
    \qquad
    a_{m,n}\in\C^2,
\end{equation}
where \(E_{\parallel}=(E_x,E_y)\), and
\begin{equation}
\label{eq:mixed-expansion-normal}
    E_z(r,z)
    =
    \sum_{m\in\Z^2}\sum_{n\ge0}
    b_{m,n}\,\phi_m(r)c_n(z),
    \qquad
    b_{m,n}\in\C .
\end{equation}
Parseval gives
\begin{equation}
\label{eq:parseval-L2-mixed}
    \norm{E}_{L^2(\OmegaLa)}^2
    =
    \sum_{m\in\Z^2}\sum_{n\ge1}\abs{a_{m,n}}^2
    +
    \sum_{m\in\Z^2}\sum_{n\ge0}\abs{b_{m,n}}^2 .
\end{equation}
At the level of \(L^2\) alone, the sine expansion should be viewed only as a
choice of orthonormal coordinates.  The perfect-conductor trace condition is
not read pointwise from an arbitrary \(L^2\) series; it is recovered below
from the weighted coefficient domain and the graph-norm closure defining
\(\Hzcurl\).

For later use define
\begin{equation}
\label{eq:lambda-mn-def}
    \lambda_{m,n}:=\abs{k_m}^2+\nu_n^2
    \qquad(n\ge1).
\end{equation}
When \(n=0\), the corresponding normal-branch eigenvalue will be written simply as \(\abs{k_m}^2\).

\subsection{The coefficient-domain lemma}
\label{subsec:coefficient-domain-lemma}

Let \(\mathfrak C_{\mathrm{PEC}}\) be the space of coefficient families
\[
    \set{a_{m,n}\in\C^2\ (m\in\Z^2,n\ge1),
          \ b_{m,n}\in\C\ (m\in\Z^2,n\ge0)}
\]
satisfying
\begin{equation}
\label{eq:CPEC-weighted-summability}
    \sum_{m\in\Z^2}\sum_{n\ge1}
    (1+\lambda_{m,n})
    \paren{\abs{a_{m,n}}^2+\abs{b_{m,n}}^2}
    +
    \sum_{m\in\Z^2}(1+\abs{k_m}^2)\abs{b_{m,0}}^2
    <\infty
\end{equation}
and the coefficient transversality equations
\begin{equation}
\label{eq:coefficient-divergence-constraint}
    \ii k_m\cdot a_{m,n}-\nu_n b_{m,n}=0,
    \qquad
    m\in\Z^2,
    \quad n\ge1 .
\end{equation}
Let
\begin{equation}
\label{eq:Cphys-def}
    \mathfrak C_{\mathrm{phys}}
    :=
    \set{(a,b)\in\mathfrak C_{\mathrm{PEC}}: b_{0,0}=0} .
\end{equation}

\begin{lemma}[Fourier characterization of the PEC form domain]
\label{lem:fourier-characterization-VPEC}
The reconstruction map defined by \eqref{eq:mixed-expansion-tangential}--\eqref{eq:mixed-expansion-normal} identifies \(\mathfrak C_{\mathrm{PEC}}\) with the form domain
\[
    \VPEC
    =
    \set{E\in\Hzcurl(\OmegaLa;\C^3):\nabla\cdot E=0}
\]
defined in \eqref{eq:VPEC-def}.  Under this identification,
\begin{equation}
\label{eq:curl-norm-diagonal}
    \norm{\nabla\times E}_{L^2}^2
    =
    \sum_{m\in\Z^2}\sum_{n\ge1}
    \lambda_{m,n}
    \paren{\abs{a_{m,n}}^2+\abs{b_{m,n}}^2}
    +
    \sum_{m\in\Z^2}\abs{k_m}^2\abs{b_{m,0}}^2 .
\end{equation}
Consequently, \(\mathfrak C_{\mathrm{phys}}\) identifies with
\[
    \Vphys=\VPEC\cap\Hphys .
\]
\end{lemma}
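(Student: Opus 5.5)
We prove the two inclusions separately and obtain the curl identity \eqref{eq:curl-norm-diagonal} along the way. The main computational input is the distributional curl of a field written in the mixed expansion \eqref{eq:mixed-expansion-tangential}--\eqref{eq:mixed-expansion-normal}.

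For a smooth lateral-periodic field with $E_x=E_y=0$ at $z=0,a$, we integrate by parts against $\phi_m c_n$ and $\phi_m s_n$. The components of $\nabla\times E$ then expand as follows.
\begin{itemize}
\item The tangential components $(\partial_yE_z-\partial_zE_y,\ \partial_zE_x-\partial_xE_z)$ expand in $\phi_m s_n$.
\item The normal component $\partial_xE_y-\partial_yE_x$ expands in $\phi_m c_n$.
\end{itemize}
The rule is $\partial_z s_n=\nu_n c_n$ and $\partial_z c_n=-\nu_n s_n$. The boundary terms in $\langle \partial_z E_\parallel,\phi_m c_n\rangle$ vanish precisely because of the tangential trace condition. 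This gives explicit coefficient formulas:
\[
\widehat{(\nabla\times E)}_\parallel(m,n)=J\bigl(\ii k_m b_{m,n}-\nu_n a_{m,n}\bigr), \qquad n\ge1,
\]
with $J$ the rotation by $\pi/2$, and
\[
\widehat{(\nabla\times E)}_z(m,n)=\ii k_m\wedge a_{m,n}, \qquad a_{m,0}:=0.
\]

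Parseval then gives $\|\nabla\times E\|^2$ as a sum over $(m,n)$. To obtain \eqref{eq:curl-norm-diagonal}, decompose $a_{m,n}$ into its components along $\khat_m$ and $\khatperp_m$ for $m\neq0$. Expand $|\ii k_m b-\nu_n a|^2+|k_m\wedge a|^2$ and use \eqref{eq:coefficient-divergence-constraint} to replace the cross term $2\nu_n\operatorname{Re}(\overline{\ii k_m b}\cdot a)$. The result collapses to $\lambda_{m,n}(|a_{m,n}|^2+|b_{m,n}|^2)$. The identity $|\ii k b-\nu a|^2+|k\wedge a|^2 = |k|^2|b|^2+\nu^2|a|^2+|k|^2|a|^2-|k\cdot a|^2-2\nu\operatorname{Re}(\ldots)$, combined with $\ii k\cdot a=\nu b$, gives exactly $\lambda(|a|^2+|b|^2)$. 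The cases $m=0$ and $n=0$ are checked directly; for $m=0$ the constraint forces $b_{0,n}=0$ when $n\ge1$. The transversality constraint itself comes from testing $\nabla\cdot E=0$ against $\phi_m c_n$ (interior test functions suffice, by density in $L^2$).

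\textbf{Inclusion $\VPEC\subset\mathfrak C_{\mathrm{PEC}}$.} Take $E\in\VPEC$. It is an $\Hcurl$-limit of smooth fields with zero tangential trace. The coefficient formulas for the curl pass to the limit coefficientwise, because the pairings involved are $L^2$-continuous in $E$ and $\nabla\times E$. In the limit the boundary terms are still absent, since they vanish for each approximant. The divergence constraint holds because $\nabla\cdot E=0$ in $\mathcal D'$. The curl identity then yields the weighted summability \eqref{eq:CPEC-weighted-summability}.

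\textbf{Inclusion $\mathfrak C_{\mathrm{PEC}}\subset\VPEC$.} This is the main obstacle. Take coefficients $(a,b)\in\mathfrak C_{\mathrm{PEC}}$ and truncate them to $|m|,n\le N$. The truncated sums are smooth, periodic, have sine tangential components (hence zero tangential trace), and still satisfy the constraint mode-by-mode, hence are divergence-free. By the curl identity, the truncations are Cauchy in the graph norm, controlled by the tail of \eqref{eq:CPEC-weighted-summability}. They therefore converge in $\Hzcurl$, and the limit is the reconstructed field, which lies in $\VPEC$ because $\VPEC$ is closed. The delicate point is ensuring that the approximants used for the reverse inclusion are genuinely in the closure defining $\Hzcurl$; finite sine/cosine sums are smooth up to the plates, so this is fine. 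The map is injective by Parseval.

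\textbf{Reduction to $\Vphys$.} Finally, $\langle E,\hzero\rangle = L\,a^{1/2}\,b_{0,0}\,(L^2a)^{-1/2}\cdot(\text{const})$, which is a nonzero multiple of $b_{0,0}$, because $\phi_0c_0$ is the normalized constant. Hence $E\perp\hzero$ if and only if $b_{0,0}=0$, giving $\mathfrak C_{\mathrm{phys}}\leftrightarrow\Vphys$.
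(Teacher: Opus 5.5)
Your architecture matches the paper's. You prove two inclusions: graph-norm Cauchy truncations for $\mathfrak C_{\mathrm{PEC}}\subset\VPEC$, and coefficient formulas plus Parseval for $\VPEC\subset\mathfrak C_{\mathrm{PEC}}$, with $b_{0,0}$ handling the reduction. For a general $E\in\VPEC$ you obtain the curl coefficients by integrating by parts against smooth PEC approximants in the graph norm and passing to the limit by $L^2$-continuity of both sides. That is more careful than the paper's ``differentiate partial sums and pass to the distributional limit,'' because it shows where $n\times E=0$ is used. For instance, $E=(0,1,0)$ is curl-free with nonzero tangential trace, yet the termwise formula gives $C^x_{0,n}=-\nu_n a^y_{0,n}\neq0$ for odd $n$. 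That reverse direction, not the one you flag, is the delicate one.

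However, your sine/cosine bookkeeping is swapped, and two steps fail as written. First, the curl bases are reversed. With $E_\parallel$ expanded in $\phim\sn$ and $E_z$ in $\phim\cn$, both terms of $\partial_yE_z-\partial_zE_y$ and of $\partial_zE_x-\partial_xE_z$ are of cosine type. So $(\nabla\times E)_\parallel$ expands in $\{\phim\cn\}_{n\ge0}$ and $(\nabla\times E)_z$ in $\{\phim\sn\}_{n\ge1}$, the opposite of your bullets. Your boundary-term remark and your displayed formulas fit the correct assignment, but the $n=0$ convention is attached to the wrong component. The term $\sum_m\abs{k_m}^2\abs{b_{m,0}}^2$ comes from the $n=0$ cosine coefficient $J(\ii k_m b_{m,0})$ of the tangential curl; the normal curl has no $n=0$ mode at all.

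Second, the constraint $\ii k_m\cdot a_{m,n}-\nu_n b_{m,n}=0$ does not come from testing against $\phim\cn$. The divergence is of sine type, and that expression is its coefficient against $\phim\sn$; pairing with $\phim\cn$ mixes in cosine coefficients of $E_\parallel$. Nor does density in $L^2$ justify moving the derivative onto the test function. The map $\varphi\mapsto\inner{E}{\nabla\varphi}$ is only $H^1$-continuous, so the identity $0=-\inner{E}{\nabla\varphi}$ extends from interior test functions only to their $H^1$-closure. That closure contains $\phim\sn$, since $\sn(0)=\sn(a)=0$, but not $\phim\cn$. Indeed $e_z$ is divergence-free, yet $\int e_z\cdot\nabla(\phi_0c_1)\,\dd x=-2L\sqrt{2/a}\neq0$. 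Testing against $\phim\sn$, as the paper does, yields the constraint with no boundary term. With that change and the corrected curl bases, your argument goes through. Also, $\inner{E}{\hzero}=b_{0,0}$ exactly, since $\hzero=\phi_0c_0e_z$.
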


\begin{proof}
We prove both inclusions.  The distributional coefficient calculations and the
density argument are given in full in
Appendix~\ref{app:fourier-domain-lemma}, and we record the main steps here to
keep the spectral calculation self-contained.

First suppose that a coefficient family lies in \(\mathfrak C_{\mathrm{PEC}}\).  Its finite rectangular truncations reconstruct smooth lateral-periodic fields.  Their tangential components are finite sums of sine modes and therefore vanish at \(z=0,a\).  Hence each truncation belongs to the smooth PEC class used to define \(\Hzcurl\).

For a finite truncation, direct differentiation gives
\begin{align}
\label{eq:div-coeff-finite}
    \nabla\cdot E
    &=
    \sum_{m\in\Z^2}\sum_{n\ge1}
    \paren{\ii k_m\cdot a_{m,n}-\nu_n b_{m,n}}
    \phi_m(r)s_n(z),
\end{align}
so the constraint \eqref{eq:coefficient-divergence-constraint} makes the truncation divergence-free.
The curl coefficients are as follows.  For \(n\ge1\),
\begin{align}
\label{eq:curl-coeff-x}
    C^x_{m,n}&:=\ii k_{m,y} b_{m,n}-\nu_n a^y_{m,n},\nonumber\\
    C^y_{m,n}&:=\nu_n a^x_{m,n}-\ii k_{m,x} b_{m,n},\\
    C^z_{m,n}&:=\ii\paren{k_{m,x}a^y_{m,n}-k_{m,y}a^x_{m,n}}.\nonumber
\end{align}
For the normal branch \(n=0\),
\begin{equation}
\label{eq:curl-coeff-zero-branch}
    C^x_{m,0}=\ii k_{m,y}b_{m,0},
    \qquad
    C^y_{m,0}=-\ii k_{m,x}b_{m,0},
    \qquad
    C^z_{m,0}=0 .
\end{equation}
A direct algebraic identity gives, for every \(m\in\Z^2\) and \(n\ge1\),
\begin{align}
\label{eq:curl-div-algebraic-identity}
    &\abs{C^x_{m,n}}^2+
     \abs{C^y_{m,n}}^2+
     \abs{C^z_{m,n}}^2+
     \abs{\ii k_m\cdot a_{m,n}-\nu_n b_{m,n}}^2  \\
    &\hspace{3cm}=
     \lambda_{m,n}
     \paren{\abs{a_{m,n}}^2+\abs{b_{m,n}}^2} .\nonumber
\end{align}
Since the divergence coefficient vanishes, this identity and \eqref{eq:curl-coeff-zero-branch} imply \eqref{eq:curl-norm-diagonal} for finite truncations.  The weighted summability condition \eqref{eq:CPEC-weighted-summability} therefore makes the finite truncations Cauchy in the \(\Hcurl\)-graph norm.  Their graph-norm limit lies in \(\Hzcurl\), the divergence constraint passes to the distributional limit, and the reconstructed field belongs to \(\VPEC\).  This proves
\[
    \mathfrak C_{\mathrm{PEC}}\subset\VPEC .
\]

Conversely, let \(E\in\VPEC\).  Since \(E\in L^2\), it has the mixed coefficient expansion \eqref{eq:mixed-expansion-tangential}--\eqref{eq:mixed-expansion-normal}, with Parseval identity \eqref{eq:parseval-L2-mixed}.  The distributional identity \(\nabla\cdot E=0\) implies the coefficient constraint \eqref{eq:coefficient-divergence-constraint}, because the divergence has the expansion \eqref{eq:div-coeff-finite} in the sense of distributions.  Equivalently, testing against the modes \(\overline{\phi_m}s_n\), \(n\ge1\), yields exactly \eqref{eq:coefficient-divergence-constraint}; no boundary term appears because \(s_n(0)=s_n(a)=0\).

The weak curl \(\nabla\times E\) belongs to \(L^2\).  The coefficient formulas
\eqref{eq:curl-coeff-x} and \eqref{eq:curl-coeff-zero-branch} are understood
distributionally: they are obtained by differentiating finite partial sums,
testing against smooth periodic sine/cosine modes, and passing to the
distributional limit.  Since the weak curl is an \(L^2\) field, uniqueness of
Fourier coefficients identifies its \(L^2\)-Fourier coefficients with the
displayed expressions.  Applying Parseval to \(\nabla\times E\) and using
\eqref{eq:curl-div-algebraic-identity} with the divergence coefficient equal to
zero gives
\[
    \sum_{m\in\Z^2}\sum_{n\ge1}
    \lambda_{m,n}
    \paren{\abs{a_{m,n}}^2+\abs{b_{m,n}}^2}
    +
    \sum_{m\in\Z^2}\abs{k_m}^2\abs{b_{m,0}}^2
    <\infty .
\]
Together with \eqref{eq:parseval-L2-mixed}, this is exactly \eqref{eq:CPEC-weighted-summability}.  Hence the coefficient family of \(E\) lies in \(\mathfrak C_{\mathrm{PEC}}\), and the curl norm is given by \eqref{eq:curl-norm-diagonal}.  Thus
\[
    \VPEC\subset\mathfrak C_{\mathrm{PEC}} .
\]

The two inclusions prove the first assertion.  Finally, the coefficient \(b_{0,0}\) is precisely the component of \(E\) in the direction of the normalized constant normal field \(\hzero=(L^2a)^{-1/2}e_z\).  Therefore imposing \(b_{0,0}=0\) is exactly the condition \(E\perp\hzero\), and \(\mathfrak C_{\mathrm{phys}}\) identifies with \(\VPEC\cap\Hphys=\Vphys\).
\end{proof}

\begin{remark}[Why this lemma is needed]
\label{rem:why-coeff-lemma-needed}
The Maxwell operator was defined by a closed form, not by first writing down separated eigenfunctions.  Lemma~\ref{lem:fourier-characterization-VPEC} proves that the separated coefficient model is the actual form domain.  Thus the spectral calculations below are calculations for the self-adjoint operator \(\LMx\), not merely formal mode counting.
\end{remark}

\subsection{Kernel and reduced inverse in finite volume}
\label{subsec:kernel-and-gap}

The Fourier characterization identifies the zero-energy fields immediately.  Let \(\LPEC\) denote the nonnegative self-adjoint operator associated with the unreduced closed form \(\qPEC\) on \(\HPEC\).

\begin{proposition}[Kernel of the unreduced PEC form]
\label{prop:unreduced-kernel}
The kernel of the unreduced PEC Maxwell form \(\qPEC\) on \(\HPEC\) is
\[
    \Ker \LPEC=\operatorname{span}\{\hzero\} .
\]
Equivalently, the only finite-volume divergence-free PEC field with zero curl is the uniform static normal field.
\end{proposition}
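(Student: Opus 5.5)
The plan is to read the kernel off from the diagonal curl identity of Lemma~\ref{lem:fourier-characterization-VPEC}. For a closed nonnegative form, \(E\in\Ker\LPEC\) if and only if \(E\in\VPEC\) and \(\qPEC[E,E]=0\). One direction follows from \(\qPEC[E,E]=\inner{\LPEC E}{E}\). For the other, \(\qPEC[E,E]=0\) together with Cauchy--Schwarz for the nonnegative form gives \(\qPEC[E,F]=0\) for all \(F\in\VPEC\). Then \(E\in\Dom(\LPEC)\) with \(\LPEC E=0\), by the domain description analogous to \eqref{eq:LMx-domain-formal}. So it suffices to determine the set of \(E\in\VPEC\) with \(\norm{\nabla\times E}_{L^2}=0\).

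Take \(E\in\VPEC\) with coefficient family \((a,b)\in\mathfrak C_{\mathrm{PEC}}\), and suppose \(\norm{\nabla\times E}_{L^2}^2=0\). By \eqref{eq:curl-norm-diagonal} this is a sum of nonnegative terms, so each term vanishes. For \(n\ge1\) we have \(\lambda_{m,n}\ge\nu_n^2=(\pi n/a)^2>0\), which forces \(a_{m,n}=0\) and \(b_{m,n}=0\) for all \(m\) and all \(n\ge1\). On the branch \(n=0\), we get \(\abs{k_m}^2\abs{b_{m,0}}^2=0\). Since \(k_m=2\pi m/L\) vanishes only for \(m=0\), this gives \(b_{m,0}=0\) for \(m\ne0\). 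The only coefficient left free is \(b_{0,0}\). Through the reconstruction map it corresponds to \(E=b_{0,0}\,\phi_0c_0\,e_z=b_{0,0}(L^2a)^{-1/2}e_z=b_{0,0}\hzero\).

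Conversely, \(\hzero\in\VPEC\), as checked in Section~\ref{subsec:static-mode-reduction}, and \(\nabla\times\hzero=0\), so \(\hzero\in\Ker\LPEC\). This gives \(\Ker\LPEC=\operatorname{span}\{\hzero\}\). The equivalent formulation in the statement (a divergence-free PEC field with zero curl is a uniform normal field) is the same computation read at the level of \(\VPEC\).

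I do not expect a real obstacle, since Lemma~\ref{lem:fourier-characterization-VPEC} does the analytic work. The one point needing care is the first step, namely that the operator kernel coincides with the null set of the form on the form domain rather than just on \(\Dom(\LPEC)\). I will handle this with the Cauchy--Schwarz argument above and the form-representation characterization of the operator domain. A smaller point is that the identification in the lemma is a bijection, so vanishing coefficients really do mean vanishing fields; this follows from Parseval \eqref{eq:parseval-L2-mixed}.
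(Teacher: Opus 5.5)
Your proposal is correct and follows the paper's proof: both read the kernel off the diagonal curl identity \eqref{eq:curl-norm-diagonal}, which leaves only \(b_{0,0}\), and then verify directly that \(\hzero\) lies in the kernel. Your Cauchy--Schwarz step identifying \(\Ker\LPEC\) with the null set of \(\qPEC\) on \(\VPEC\) is a point the paper passes over silently, so it adds rigor rather than giving a different route.
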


\begin{proof}
If \(E\in\VPEC\) and \(\qPEC[E,E]=0\), then \(\nabla\times E=0\).  By \eqref{eq:curl-norm-diagonal}, all coefficients with positive weights must vanish:
\[
    a_{m,n}=0,
    \quad
    b_{m,n}=0
    \quad(n\ge1),
    \qquad
    b_{m,0}=0
    \quad(m\ne0).
\]
Only \(b_{0,0}\) may remain.  The corresponding field is a constant multiple of \(e_z\), or of the normalized field \(\hzero\).  Conversely, \(e_z\) is divergence-free, satisfies the PEC tangential condition, and has zero curl.  Hence the kernel is exactly \(\operatorname{span}\{\hzero\}\).
\end{proof}

After passing to \(\Hphys=\HPEC\ominus\operatorname{span}\{\hzero\}\), the zero mode is removed.  The first positive spectral scale is
\begin{equation}
\label{eq:finite-gap-ell-La}
    \ell_{L,a}
    :=
    \min\set{
        \paren{\frac{2\pi}{L}}^2,
        \paren{\frac{\pi}{a}}^2
    } .
\end{equation}
The next proposition makes this precise.

\begin{proposition}[Finite-volume spectral gap]
\label{prop:finite-volume-gap}
For finite \(L<\infty\) and \(a>0\), the reduced Maxwell operator satisfies
\begin{equation}
\label{eq:LMx-gap}
    \LMx\ge \ell_{L,a} I
    \qquad\text{on }\Hphys .
\end{equation}
Consequently \(\LMx^{-1}\) is a bounded positive operator on \(\Hphys\), with
\begin{equation}
\label{eq:LMx-inverse-bound}
    \norm{\LMx^{-1}}
    \le
    \ell_{L,a}^{-1} .
\end{equation}
\end{proposition}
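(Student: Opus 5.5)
The plan is to prove the form inequality \(\qMx[E,E]\ge \ell_{L,a}\norm{E}_{L^2}^2\) for all \(E\in\Vphys\), and then pass to the operator statement through the form representation. By Lemma~\ref{lem:fourier-characterization-VPEC}, every \(E\in\Vphys\) corresponds to a coefficient family in \(\mathfrak C_{\mathrm{phys}}\). In these coordinates \(\norm{\nabla\times E}_{L^2}^2\) is given by the diagonal formula \eqref{eq:curl-norm-diagonal}, and \(\norm{E}_{L^2}^2\) by the Parseval identity \eqref{eq:parseval-L2-mixed}. The whole claim therefore reduces to comparing the weights termwise.

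The termwise comparison has two cases.
\begin{itemize}
\item For \(n\ge1\), the weight satisfies \(\lambda_{m,n}=\abs{k_m}^2+\nu_n^2\ge \nu_1^2=(\pi/a)^2\ge\ell_{L,a}\). Hence
\[
\lambda_{m,n}\paren{\abs{a_{m,n}}^2+\abs{b_{m,n}}^2}\ge \ell_{L,a}\paren{\abs{a_{m,n}}^2+\abs{b_{m,n}}^2}.
\]
\item For \(n=0\), the condition \(b_{0,0}=0\) defining \(\mathfrak C_{\mathrm{phys}}\) means only \(m\ne0\) contributes. For \(m\ne0\), \(m\in\Z^2\) gives \(\abs{m}\ge1\), so \(\abs{k_m}^2=(2\pi/L)^2\abs{m}^2\ge(2\pi/L)^2\ge\ell_{L,a}\).
\end{itemize}
Summing the two cases yields \(\qMx[E,E]\ge\ell_{L,a}\norm{E}^2_{\Hphys}\) on \(\Vphys\).

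To obtain the operator inequality, I would use \eqref{eq:form-representation-LMx}: for \(E\in\Dom(\LMx)\subset\Vphys\), \(\inner{\LMx E}{E}=\qMx[E,E]\ge\ell_{L,a}\norm{E}^2\). This is the same as \(\LMx\ge\ell_{L,a}I\) in the sense of self-adjoint operators. Equivalently, the spectrum of \(\LMx\) lies in \([\ell_{L,a},\infty)\), since the bottom of the spectrum equals the infimum of the form's Rayleigh quotient over the form domain (min--max, \cite{Kato1995}). The spectral theorem then shows that \(0\) lies in the resolvent set, so \(\LMx^{-1}=\int \lambda^{-1}\,dE_\lambda\) is bounded with \(\norm{\LMx^{-1}}\le\sup_{\lambda\ge\ell_{L,a}}\lambda^{-1}=\ell_{L,a}^{-1}\). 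It is positive because \(\lambda^{-1}>0\) on the spectrum.

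The proof has no substantial obstacle. The only delicate point is ensuring that the infimum is taken over the whole form domain \(\Vphys\), not over a smaller set of modes. This is exactly what Lemma~\ref{lem:fourier-characterization-VPEC} supplies: it identifies \(\Vphys\) with \(\mathfrak C_{\mathrm{phys}}\) and supplies the diagonal curl formula.
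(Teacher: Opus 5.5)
Your proposal is correct and follows the same route as the paper's proof. The paper likewise uses the Fourier characterization of $\Vphys$, compares the weights $\lambda_{m,n}\ge(\pi/a)^2$ for $n\ge1$ and $\abs{k_m}^2\ge(2\pi/L)^2$ for $m\ne0$ against Parseval (with $b_{0,0}=0$ removing the only zero weight), and then gets the bounded inverse from the spectral theorem. Your version just spells out the step from the form inequality to the operator inequality, which the paper leaves implicit.
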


\begin{proof}
For \(E\in\Vphys\), the coefficient \(b_{0,0}\) vanishes.  In \eqref{eq:curl-norm-diagonal}, every remaining coefficient is weighted either by \(\lambda_{m,n}\) with \(n\ge1\), or by \(\abs{k_m}^2\) with \(m\ne0\).  The smallest possible value of \(\lambda_{m,n}\) for \(n\ge1\) is \((\pi/a)^2\), attained at \(m=0,n=1\).  The smallest value of \(\abs{k_m}^2\) for \(m\ne0\) is \((2\pi/L)^2\).  Therefore
\[
    \qMx[E,E]
    \ge
    \ell_{L,a}\norm{E}_{L^2}^2
    \qquad(E\in\Vphys),
\]
which is the quadratic-form inequality \eqref{eq:LMx-gap}.  The boundedness of the inverse follows from the spectral theorem.
\end{proof}

\begin{remark}[Why finite volume is essential]
\label{rem:finite-volume-essential}
The bound \eqref{eq:LMx-gap} collapses as \(L\to\infty\), because the normal branch with \(n=0\) has eigenvalues \(\abs{k_m}^2\).  Therefore all uses of \(\LMx^{-1}\) as a bounded operator are finite-volume statements.  The large-area limit is taken later only after heat-regularized traces per unit area have been formed.
\end{remark}

\subsection{Diagonal form and spectrum}
\label{subsec:diagonal-form-spectrum}

The coefficient model also diagonalizes the self-adjoint operator.  For \(n\ge1\), define the constrained block
\begin{equation}
\label{eq:Wmn-block-def}
    W_{m,n}
    :=
    \set{(a,b)\in\C^2\oplus\C:
          \ii k_m\cdot a-\nu_n b=0} .
\end{equation}
For \(n=0\) and \(m\ne0\), define \(W_{m,0}:=\C\), represented by the normal coefficient \(b_{m,0}\).  The removed block \((m,n)=(0,0)\) is absent from the physical space.

For \(n\ge1\), the block \(W_{m,n}\) has dimension two.  Indeed, it is the kernel of one nonzero linear functional on \(\C^3\), because \(\nu_n>0\).  The block \(W_{m,0}\) has dimension one for \(m\ne0\).

Under the unitary coefficient representation of \(\Hphys\), the form \(\qMx\) is
\begin{equation}
\label{eq:qMx-block-diagonal}
    \qMx[E,E]
    =
    \sum_{m\in\Z^2}\sum_{n\ge1}
    \lambda_{m,n}\norm{(a_{m,n},b_{m,n})}_{\C^3}^2
    +
    \sum_{m\in\Z^2\setminus\{0\}}
    \abs{k_m}^2\abs{b_{m,0}}^2 .
\end{equation}
It follows from the representation theorem for diagonal closed forms that \(\LMx\) acts as multiplication by \(\lambda_{m,n}\) on \(W_{m,n}\), \(n\ge1\), and by \(\abs{k_m}^2\) on \(W_{m,0}\), \(m\ne0\).

Thus the reduced Maxwell spectrum is
\begin{equation}
\label{eq:LMx-spectrum-list}
    \lambda_{m,n}
    =
    \abs{k_m}^2+\nu_n^2,
    \qquad
    m\in\Z^2,
    \quad n\ge1,
\end{equation}
with multiplicity two, together with
\begin{equation}
\label{eq:LMx-zero-branch-list}
    \abs{k_m}^2,
    \qquad
    m\in\Z^2\setminus\{0\},
\end{equation}
with multiplicity one.  The missing \(m=0,n=0\) member of the second family is exactly the removed static normal mode.

\begin{proposition}[Trace formula for diagonal spectral functions]
\label{prop:diagonal-trace-formula}
Let \(f:[0,\infty)\to\C\) be Borel and suppose that the sums below converge absolutely.  Then
\begin{equation}
\label{eq:LMx-diagonal-trace-formula}
    \Trphys f(\LMx)
    =
    2\sum_{m\in\Z^2}\sum_{n=1}^{\infty}
    f\paren{\abs{k_m}^2+\nu_n^2}
    +
    \sum_{m\in\Z^2\setminus\{0\}}
    f\paren{\abs{k_m}^2} .
\end{equation}
\end{proposition}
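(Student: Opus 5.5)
The plan is to read the trace directly off the orthogonal block decomposition of \(\Hphys\) from Section~\ref{subsec:diagonal-form-spectrum}. By Lemma~\ref{lem:fourier-characterization-VPEC}, the coefficient map is unitary from \(\HPEC\) onto the \(L^2\)-closure of the coefficient model. I will first check that this closure is the Hilbert direct sum \(\bigoplus_{m,n\ge1}W_{m,n}\oplus\bigoplus_{m}W_{m,0}\). The constraint \eqref{eq:coefficient-divergence-constraint} is closed blockwise, and finitely supported constrained families are dense. Removing \(\hzero\) deletes the block \(W_{0,0}\), which gives the decomposition of \(\Hphys\). Equation~\eqref{eq:qMx-block-diagonal} shows that the closed form is the diagonal form with weights \(\lambda_{m,n}\) on \(W_{m,n}\) and \(\abs{k_m}^2\) on \(W_{m,0}\). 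Uniqueness in Kato's representation theorem then identifies \(\LMx\) with the corresponding multiplication operator. Concretely, the operator \(M\) acting as the stated scalar on each block is self-adjoint, and its form (with its maximal form domain) coincides with \(\qMx\) on \(\Vphys\), so \(M=\LMx\).

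With that identification, the spectral calculus is immediate. For Borel \(f\), \(f(\LMx)\) acts on \(W_{m,n}\) as \(f(\lambda_{m,n})\) times the identity and on \(W_{m,0}\) as \(f(\abs{k_m}^2)\). To compute the trace I pick an orthonormal basis adapted to the blocks: two orthonormal vectors in each \(W_{m,n}\) with \(n\ge1\), since that block has dimension two as a kernel of a nonzero functional on \(\C^3\), and one vector in each \(W_{m,0}\) with \(m\ne0\). The diagonal matrix elements are \(f(\lambda_{m,n})\), counted twice, and \(f(\abs{k_m}^2)\), counted once.

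Summing these diagonal elements gives \eqref{eq:LMx-diagonal-trace-formula}. The hypothesis of absolute convergence makes \(f(\LMx)\) trace class: \(\sum\abs{f}\) over the eigenvalues equals the trace norm of this normal operator with an orthonormal eigenbasis. The trace is then basis-independent and the order of summation is irrelevant. If \(\Trphys\) is meant only for nonnegative \(f\), the sum of nonnegative terms is unambiguous in any case.

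The only real obstacle is the first step. It must justify that the coefficient model exhausts \(\HPEC\), and not merely \(\VPEC\), and that the block structure of the form domain really determines the operator, including its domain, and not just the form on a core. I will handle this through the closedness already established for \(\qMx\) together with uniqueness of the associated operator, rather than by working with explicit eigenfunctions.
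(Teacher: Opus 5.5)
Your proposal is correct and follows essentially the same route as the paper: the orthogonal block decomposition of \(\Hphys\), the identification of \(\LMx\) as scalar multiplication by \(\lambda_{m,n}\) on \(W_{m,n}\) and by \(\abs{k_m}^2\) on \(W_{m,0}\), and a block-by-block trace. You also make explicit three steps the paper leaves implicit: that the \(L^2\)-closure of the coefficient model is the full block direct sum, that Kato uniqueness determines the operator itself and not only its form, and that absolute convergence makes \(f(\LMx)\) trace class, so the trace is basis-independent.
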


\begin{proof}
The block decomposition above is an orthogonal Hilbert direct sum.  The operator \(\LMx\) is scalar multiplication by \(\lambda_{m,n}\) on each two-dimensional block \(W_{m,n}\), \(n\ge1\), and scalar multiplication by \(\abs{k_m}^2\) on each one-dimensional block \(W_{m,0}\), \(m\ne0\).  Taking the trace of \(f(\LMx)\) block by block gives \eqref{eq:LMx-diagonal-trace-formula}.
\end{proof}

\begin{remark}[Not yet the TE/TM theorem]
\label{rem:not-yet-te-tm}
Formula \eqref{eq:LMx-diagonal-trace-formula} is a diagonal spectral formula.  The later TE/TM section will identify the same trace as a Dirichlet scalar trace plus a reduced Neumann scalar trace.  That later interpretation is not needed for the present heat-trace admissibility result.
\end{remark}

\subsection{Compact resolvent and heat-trace admissibility}
\label{subsec:compact-heat-trace}

\begin{proposition}[Compact resolvent]
\label{prop:compact-resolvent-LMx}
For finite \(L<\infty\) and \(a>0\), the reduced Maxwell operator \(\LMx\) has compact resolvent.
\end{proposition}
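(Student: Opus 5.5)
The plan is to read compactness directly off the block-diagonal description of \(\LMx\) obtained in Section~\ref{subsec:diagonal-form-spectrum}. By Lemma~\ref{lem:fourier-characterization-VPEC} and \eqref{eq:qMx-block-diagonal}, \(\Hphys\) is unitarily the orthogonal Hilbert direct sum of the finite-dimensional blocks \(W_{m,n}\) (\(n\ge1\), dimension two) and \(W_{m,0}\) (\(m\ne0\), dimension one). On these blocks \(\LMx\) acts as multiplication by \(\lambda_{m,n}\) and by \(\abs{k_m}^2\), respectively. Since \(\LMx\ge\ell_{L,a}>0\) by Proposition~\ref{prop:finite-volume-gap}, it suffices to show that \(\LMx^{-1}\) is compact. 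Then \((\LMx-z)^{-1}\) is compact for every \(z\) in the resolvent set by the resolvent identity.

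First I would approximate \(\LMx^{-1}\) in operator norm by finite-rank operators. For \(R>0\), let \(P_R\) be the orthogonal projection onto the direct sum of those blocks whose eigenvalue is at most \(R\). This set of blocks is finite, because \(\{m\in\Z^2:\abs{k_m}^2\le R\}\) is finite for \(L<\infty\), and \(\nu_n^2\le R\) allows only finitely many \(n\). Each block has dimension at most two, so \(\LMx^{-1}P_R\) has finite rank. On the complementary blocks the inverse acts as multiplication by numbers at most \(R^{-1}\). Hence \(\norm{\LMx^{-1}-\LMx^{-1}P_R}\le R^{-1}\to0\), and \(\LMx^{-1}\) is a norm limit of finite-rank operators, hence compact.

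The only step needing care is that the blockwise multiplication really is the self-adjoint operator \(\LMx\) defined by the form, and not merely formal mode counting. This was settled by Lemma~\ref{lem:fourier-characterization-VPEC} together with the representation theorem for diagonal closed forms, quoted in Section~\ref{subsec:diagonal-form-spectrum}, so I would simply invoke it.

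As an alternative I would record the equivalent form-level argument. The unit ball of \(\Vphys\) in the norm \(\norm{E}_{L^2}^2+\qMx[E,E]\) corresponds to coefficient families with weighted sum bounded by one in \eqref{eq:CPEC-weighted-summability}. Its tails beyond eigenvalue \(R\) have \(L^2\) norm at most \((1+R)^{-1/2}\). So this unit ball is totally bounded in \(\Hphys\), meaning the form domain embeds compactly, which is again equivalent to compact resolvent. The finiteness of \(L\) enters essentially here: it makes the lateral lattice \(\{k_m\}\) discrete, which is the real content of the claim.
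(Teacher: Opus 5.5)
Your proposal is correct and follows essentially the same route as the paper. The paper reads off from the explicit spectrum lists \eqref{eq:LMx-spectrum-list}--\eqref{eq:LMx-zero-branch-list} that the eigenvalues have finite multiplicity and only finitely many lie below any \(R\), and concludes that \((\LMx+I)^{-1}\) is compact; your finite-rank truncation \(\LMx^{-1}P_R\), with error at most \(R^{-1}\), and your equivalent compact-embedding argument just spell out the standard step the paper leaves implicit.
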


\begin{proof}
By \eqref{eq:LMx-spectrum-list}--\eqref{eq:LMx-zero-branch-list}, the spectrum consists of eigenvalues
\[
    \abs{k_m}^2+\nu_n^2
    \quad(m\in\Z^2,n\ge1)
\]
with finite multiplicity, and eigenvalues \(\abs{k_m}^2\) with \(m\ne0\), also with finite multiplicity.  For every \(R>0\), only finitely many lattice pairs satisfy
\[
    \abs{k_m}^2+\nu_n^2\le R,
\]
and only finitely many \(m\in\Z^2\setminus\{0\}\) satisfy \(\abs{k_m}^2\le R\).  Hence the eigenvalues tend to infinity with finite multiplicity.  Therefore \((\LMx+I)^{-1}\) is compact.
\end{proof}

\begin{proposition}[Heat-trace admissibility]
\label{prop:heat-trace-admissibility}
For every \(\tau>0\) and every \(p\ge0\),
\begin{equation}
\label{eq:heat-trace-admissibility}
    \Tr\paren{\LMx^p e^{-\tau\LMx}}<\infty .
\end{equation}
In particular, the traces needed in the Gaussian construction are finite:
\begin{equation}
\label{eq:needed-heat-traces-finite}
    \Tr(e^{-\tau\LMx})<\infty,
    \qquad
    \Tr\paren{\LMx^{1/2}e^{-\tau\LMx}}<\infty,
    \qquad
    \Tr\paren{\LMx^{3/2}e^{-\tau\LMx}}<\infty .
\end{equation}
\end{proposition}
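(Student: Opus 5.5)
The plan is to apply the diagonal trace formula of Proposition~\ref{prop:diagonal-trace-formula} with \(f(\lambda)=\lambda^p e^{-\tau\lambda}\), which is a nonnegative Borel function on \([0,\infty)\), and to show that the resulting lattice sums converge. Since \(f\ge0\), absolute convergence is the same as convergence, and the trace of the positive operator \(\LMx^p e^{-\tau\LMx}\) equals the sum of its eigenvalues counted with multiplicity. By the spectral description \eqref{eq:LMx-spectrum-list}--\eqref{eq:LMx-zero-branch-list} this sum is exactly the right-hand side of \eqref{eq:LMx-diagonal-trace-formula}, so it suffices to prove
\[
    2\sum_{m\in\Z^2}\sum_{n\ge1}\lambda_{m,n}^p e^{-\tau\lambda_{m,n}}
    +\sum_{m\ne0}\abs{k_m}^{2p}e^{-\tau\abs{k_m}^2}<\infty .
\]

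The key step is to absorb the polynomial weight into part of the exponential. For \(p\ge0\) we have the elementary bound \(\lambda^p e^{-\tau\lambda/2}\le C_{p,\tau}:=(2p/(e\tau))^p\) for \(\lambda\ge0\), with the convention \(C_{0,\tau}=1\). Hence \(f(\lambda)\le C_{p,\tau}e^{-\tau\lambda/2}\), and it is enough to bound the pure heat sums with \(\tau/2\) in place of \(\tau\).

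These sums factor. For the first family,
\[
    \sum_{m\in\Z^2}\sum_{n\ge1}e^{-\tau(\abs{k_m}^2+\nu_n^2)/2}
    =\Big(\sum_{j\in\Z}e^{-\tau(2\pi j/L)^2/2}\Big)^2\sum_{n\ge1}e^{-\tau(\pi n/a)^2/2},
\]
and each one-dimensional Gaussian sum over \(\Z\) is finite. One way to see this is to compare with \(\sum_j e^{-c\abs j}\), using \(j^2\ge\abs j\) for integers. The zero-vertical branch is bounded by the same squared lateral sum. Putting these together gives \eqref{eq:heat-trace-admissibility}. The three traces in \eqref{eq:needed-heat-traces-finite} are the special cases \(p=0,\tfrac12,\tfrac32\).

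There is no serious obstacle here. The only point that needs care is the justification that the operator trace equals the eigenvalue sum, which follows from the orthogonal block decomposition into the finite-dimensional spaces \(W_{m,n}\) established in Section~\ref{subsec:diagonal-form-spectrum}, since \(\LMx\) acts as a scalar on each block. We will also remark that the resulting bounds depend on \(L\) and \(a\). This causes no difficulty, because the statement is a finite-volume one and no large-area limit is taken at this point.
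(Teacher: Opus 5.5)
Your proposal is correct and follows essentially the same route as the paper: apply the diagonal trace formula with \(f(\lambda)=\lambda^p e^{-\tau\lambda}\) and show that the resulting lattice sums converge. The only cosmetic difference is in the summability step. You absorb the polynomial weight into half of the exponential and factor the Gaussian sum into one-dimensional sums. The paper instead uses the lower bound \(\abs{k_m}^2+\nu_n^2\ge c_{L,a}(\abs{m}^2+n^2)\) and cites summability of a polynomial times a Gaussian on the lattice.
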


\begin{proof}
Using \eqref{eq:LMx-diagonal-trace-formula} with \(f(\lambda)=\lambda^p e^{-\tau\lambda}\),
\begin{align}
\label{eq:heat-trace-explicit}
    \Tr\paren{\LMx^p e^{-\tau\LMx}}
    &=
    2\sum_{m\in\Z^2}\sum_{n=1}^{\infty}
    \paren{\abs{k_m}^2+\nu_n^2}^p
    e^{-\tau(\abs{k_m}^2+\nu_n^2)}
    \\
    &\quad+
    \sum_{m\in\Z^2\setminus\{0\}}
    \abs{k_m}^{2p}e^{-\tau\abs{k_m}^2} .\nonumber
\end{align}
Let
\[
    c_{L,a}:=\min\set{\paren{\frac{2\pi}{L}}^2,\paren{\frac{\pi}{a}}^2}>0 .
\]
Then
\[
    \abs{k_m}^2+\nu_n^2
    \ge c_{L,a}\paren{\abs{m}^2+n^2}
    \qquad(n\ge1),
\]
and \(\abs{k_m}^2\ge (2\pi/L)^2\abs{m}^2\) for the normal branch.  A polynomial factor times a Gaussian lattice factor is summable on \(\Z^2\times\N\) and on \(\Z^2\).  Thus both sums in \eqref{eq:heat-trace-explicit} converge absolutely, proving \eqref{eq:heat-trace-admissibility}.  The three special cases in \eqref{eq:needed-heat-traces-finite} follow by taking \(p=0,1/2,3/2\).
\end{proof}

\begin{corollary}[Spectral hypotheses for the finite-volume Maxwell construction]
\label{cor:finite-volume-maxwell-spectral-hypotheses}
For each fixed finite \(L<\infty\) and \(a>0\), the reduced Maxwell operator \(\LMx\) is positive self-adjoint on \(\Hphys\), has compact resolvent, satisfies the gap estimate \eqref{eq:LMx-gap}, and obeys the heat-trace bounds \eqref{eq:heat-trace-admissibility}.  Thus it satisfies the finite-volume spectral hypotheses needed for the Riesz reduction and the heat-regularized Maxwell Gaussian source.
\end{corollary}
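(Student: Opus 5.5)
The corollary collects results already proved in this section, so the proof will assemble earlier statements; no new estimate is needed. Fix finite \(L<\infty\) and \(a>0\).

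\emph{Self-adjointness and positivity.} Definition~\ref{def:reduced-pec-maxwell-operator} constructs \(\LMx\) from the closed, densely defined, nonnegative form \(\qMx\) on \(\Hphys\) via Kato's representation theorem. So \(\LMx\) is nonnegative self-adjoint. Strict positivity will come from the next step.

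\emph{Gap, compact resolvent, heat traces.}
\begin{enumerate}
\item Proposition~\ref{prop:finite-volume-gap} gives \(\LMx\ge\ell_{L,a}I\) with \(\ell_{L,a}>0\), because \(L\) is finite. This upgrades nonnegativity to positivity and makes \(\LMx^{-1}\) bounded.
\item Proposition~\ref{prop:compact-resolvent-LMx} gives the compact resolvent.
\item Proposition~\ref{prop:heat-trace-admissibility} gives \(\Tr(\LMx^pe^{-\tau\LMx})<\infty\) for all \(\tau>0\) and \(p\ge0\).
\end{enumerate}
All three results rest on the coefficient-domain identification of Lemma~\ref{lem:fourier-characterization-VPEC}. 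That lemma is what guarantees the block-diagonal spectrum \eqref{eq:LMx-spectrum-list}--\eqref{eq:LMx-zero-branch-list} is the spectrum of the form-defined operator.

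\emph{Matching to what the later constructions use.} The Riesz reduction needs the gap so that \(\int_{\R^3}(\LMx+\abs q^2)^{-5/2}\,\dd^3q\) converges in operator norm near \(q=0\), and so that the result \(\LMx^{-1}\) is bounded. The Gaussian source needs \(\Tr(\LMx^{3/2}e^{-\tau\LMx})<\infty\) so that its covariance \(\Ctau\) is trace class. The regulated identity needs \(\Tr(\LMx^{1/2}e^{-\tau\LMx})<\infty\). All of these are the special cases listed in \eqref{eq:needed-heat-traces-finite}.

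There is essentially no obstacle here. The only point needing care is consistency: the constant \(c_{L,a}\) in the heat-trace proof must coincide with \(\ell_{L,a}\), which it does by definition. I will also remark explicitly that each hypothesis depends on \(L<\infty\), in line with Remark~\ref{rem:finite-volume-essential}.
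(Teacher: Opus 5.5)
Your proposal is correct and takes the same approach as the paper: the corollary has no separate proof there because it simply assembles the Kato construction in Definition~\ref{def:reduced-pec-maxwell-operator} with Propositions~\ref{prop:finite-volume-gap}, \ref{prop:compact-resolvent-LMx}, and~\ref{prop:heat-trace-admissibility}, all resting on Lemma~\ref{lem:fourier-characterization-VPEC}. Your consistency check that the constant in the heat-trace proof equals \(\ell_{L,a}\) is harmless but not needed, since that proof only requires some positive lower bound on the lattice weights.
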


\begin{remark}[Use in the next section]
\label{rem:remaining-after-heat-trace}
This section supplies the finite-volume spectral input for the Riesz reduction
and Gaussian source construction of the next section.
\end{remark}
\section{Riesz reduction and the Maxwell Gaussian source}
\label{sec:riesz-gaussian-source}

Throughout this section the lateral size \(L<\infty\) and the plate spacing
\(a>0\) are fixed.  Thus \(\LMx\) denotes the reduced perfect-conductor Maxwell
operator on \(\Hphys\) constructed in Section~\ref{sec:maxwell-operator}.  By
Corollary~\ref{cor:finite-volume-maxwell-spectral-hypotheses}, \(\LMx\) is
positive self-adjoint, has compact resolvent, and has a positive finite-volume
spectral gap
\begin{equation}
    \LMx\geq \ell_{L,a}I,
    \qquad
    \ell_{L,a}
    :=
    \min\set{\paren{\frac{2\pi}{L}}^2,
              \paren{\frac{\pi}{a}}^2}>0 .
    \label{eq:section04-gap-assumption}
\end{equation}
It also satisfies the heat-trace admissibility statement
\begin{equation}
    \Tr\paren{\LMx^p e^{-\tau\LMx}}<\infty
    \qquad
    (\tau>0,\ p\geq0).
    \label{eq:section04-heat-trace-assumption}
\end{equation}
Consequently \(\LMx^{-1}\) is a bounded positive operator on \(\Hphys\) for
fixed finite \(L\).  The large-area limit is not taken in this section.

The purpose of this section is to prove the regulated representation theorem
for this finite-volume Maxwell operator.  The construction has two parts.  The
first is deterministic: the codimension-three transverse Riesz reduction of
\((\LMx+\abs{q}^2)^{-5/2}\) gives the reduced Maxwell Green operator
\(\LMx^{-1}\), up to the universal constant \((6\pi^2)^{-1}\).  The second is
stochastic: a prescribed heat-regularized Gaussian source with covariance
proportional to \(\LMx^{3/2}e^{-\tau\LMx}\) turns the quadratic form with
\(g\LMx^{-1}\) into the heat-regularized Maxwell trace.
This section is the point of closest formal overlap with
\cite{KhanKhan2026Scalar}.  At the level of spectral calculus, the
codimension-three Riesz reduction and the prescribed-covariance
quadratic-form identity are the same mechanism.  What changes here is the
underlying operator: instead of an abstract scalar operator, we apply the
construction to the reduced perfect-conductor Maxwell operator on \(\Hphys\).

\begin{remark}[Finite-volume nature of the inverse]
\label{rem:finite-volume-inverse-only}
The finite-volume lower bound in \eqref{eq:section04-gap-assumption} is used in
two places: to make the transverse Riesz integral an operator-norm integral and
to make \(\LMx^{-1}\) bounded in the stochastic quadratic form.  The lower bound
collapses as \(L\to\infty\), because of the \(n=0\) normal branch identified in
Section~\ref{sec:fourier-domain-heat-trace}.  Hence all identities below are
finite-volume identities.  Later, only after taking traces per unit area and
applying the parallel-plate interaction finite part, is the limit
\(L\to\infty\) taken.
\end{remark}

\subsection{An abstract codimension-three Riesz reduction}
\label{subsec:abstract-riesz-reduction}

We first record the Hilbert-space version of the transverse momentum integral.
It is independent of whether the operator acts on scalar functions, vector
fields, or sections of a vector bundle.  Only the spectral theorem and the
positive lower bound are used.

\begin{lemma}[Abstract transverse Riesz integral]
\label{lem:hilbert-space-riesz-integral}
Let \(A\) be a positive self-adjoint operator on a Hilbert space \(\mathcal H\),
and suppose
\[
    A\geq \ell I
    \qquad \text{for some }\ell>0 .
\]
For \(s>3/2\), define
\begin{equation}
    T_s(A)
    :=
    \int_{\R^3}\frac{\dd^3 q}{(2\pi)^3}
    (A+\abs{q}^2)^{-s} .
    \label{eq:TsA-def-section04}
\end{equation}
Then the integral in \eqref{eq:TsA-def-section04} converges in operator norm
and
\begin{equation}
    T_s(A)
    =
    \frac{1}{(4\pi)^{3/2}}
    \frac{\Gamma(s-3/2)}{\Gamma(s)}
    A^{3/2-s} .
    \label{eq:TsA-evaluated-section04}
\end{equation}
In particular,
\begin{equation}
    \int_{\R^3}\frac{\dd^3 q}{(2\pi)^3}
    (A+\abs{q}^2)^{-5/2}
    =
    \frac{1}{6\pi^2}A^{-1} .
    \label{eq:riesz-five-halves-abstract}
\end{equation}
\end{lemma}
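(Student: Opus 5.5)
The plan is to reduce everything to a scalar identity via the spectral theorem and then lift it to operators using the uniform lower bound $A\ge\ell I$. For fixed $\lambda\ge\ell$, polar coordinates in $\R^3$ give
\[
    t_s(\lambda):=\int_{\R^3}\frac{\dd^3q}{(2\pi)^3}(\lambda+\abs q^2)^{-s}
    =\frac{4\pi}{(2\pi)^3}\int_0^\infty \frac{\rho^2\,\dd\rho}{(\lambda+\rho^2)^{s}} .
\]
The substitution $\rho^2=\lambda u$ turns this into $\frac{1}{4\pi^2}\lambda^{3/2-s}B(3/2,s-3/2)$. With $\Gamma(3/2)=\sqrt\pi/2$ this equals $(4\pi)^{-3/2}\frac{\Gamma(s-3/2)}{\Gamma(s)}\lambda^{3/2-s}$. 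The condition $s>3/2$ is exactly what makes the radial integral converge at infinity. For $s=5/2$ the constant is $(4\pi)^{-3/2}\Gamma(1)/\Gamma(5/2)=(8\pi^{3/2})^{-1}\cdot 4/(3\sqrt\pi)=1/(6\pi^2)$, which gives \eqref{eq:riesz-five-halves-abstract}.

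To pass to operators, I would note that for each $q$ the operator $(A+\abs q^2)^{-s}$ is bounded. By the spectral theorem its norm is $\sup_{\lambda\in\sigma(A)}(\lambda+\abs q^2)^{-s}\le(\ell+\abs q^2)^{-s}$, and this bound is integrable over $\R^3$ when $s>3/2$. The map $q\mapsto(A+\abs q^2)^{-s}$ is norm-continuous, since $\lambda\mapsto(\lambda+r)^{-s}$ has derivative in $r$ bounded uniformly on $[\ell,\infty)$. Hence the integral exists as an operator-norm convergent Bochner (or improper Riemann) integral. Dominated convergence then gives convergence of the truncated integrals over $\abs q\le R$ in norm.

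To identify the integral, I would test against vectors. For $\psi,\varphi\in\mathcal H$ with spectral measure $\mu_{\psi,\varphi}$, norm convergence lets us pull the pairing inside the $q$-integral:
\[
    \inner{T_s(A)\psi}{\varphi}=\int_{\R^3}\frac{\dd^3q}{(2\pi)^3}\int_{[\ell,\infty)}(\lambda+\abs q^2)^{-s}\,\dd\mu_{\psi,\varphi}(\lambda).
\]
Fubini applies because the total variation of $\mu_{\psi,\varphi}$ is at most $\norm\psi\norm\varphi$ and the integrand is bounded by $(\ell+\abs q^2)^{-s}$, which is integrable. Exchanging the order yields $\int t_s(\lambda)\,\dd\mu_{\psi,\varphi}(\lambda)=\inner{t_s(A)\psi}{\varphi}$. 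Since $t_s(\lambda)$ is a constant times $\lambda^{3/2-s}$, and this power is bounded on $[\ell,\infty)$ because $3/2-s<0$, this proves \eqref{eq:TsA-evaluated-section04} as an identity of bounded operators.

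The only step needing care is justifying operator-norm convergence and the Fubini interchange. Both rest entirely on the single uniform bound $(\ell+\abs q^2)^{-s}$, so I expect no real obstacle. The scalar Beta–Gamma computation is routine.
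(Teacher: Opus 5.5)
Your proposal is correct and follows the paper's proof: the uniform bound $\norm{(A+\abs{q}^2)^{-s}}\le(\ell+\abs{q}^2)^{-s}$ gives an operator-norm Bochner integral, and pairing with the spectral measure plus Fubini reduces the claim to the scalar integral. The differences are minor. You evaluate the scalar integral by polar coordinates and a Beta function, whereas the paper uses the Schwinger representation $\lambda^{-s}=\Gamma(s)^{-1}\int_0^\infty t^{s-1}e^{-t\lambda}\dd t$ together with the Gaussian $q$-integral. You also check norm-continuity (hence measurability) of the integrand, which the paper leaves implicit.
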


\begin{proof}
The lower bound gives
\[
    \norm{(A+\abs{q}^2)^{-s}}
    \leq
    (\ell+\abs{q}^2)^{-s} .
\]
The right-hand side is integrable over \(\R^3\) exactly when \(s>3/2\).
Hence \eqref{eq:TsA-def-section04} is an operator-norm Bochner integral.

Let \(E_A(\dd\lambda)\) be the spectral resolution of \(A\).  For
\(u,v\in\mathcal H\), the preceding norm bound justifies Fubini and gives
\begin{align*}
    \inner{u}{T_s(A)v}
    &=
    \int_{\ell}^{\infty}
    \left[
        \int_{\R^3}\frac{\dd^3 q}{(2\pi)^3}
        (\lambda+\abs{q}^2)^{-s}
    \right]
    \dd\mu_{u,v}(\lambda),
\end{align*}
where \(\dd\mu_{u,v}(\lambda)=\inner{u}{E_A(\dd\lambda)v}\).  The scalar
integral is computed by the Schwinger representation:
\begin{align*}
    \int_{\R^3}\frac{\dd^3q}{(2\pi)^3}(\lambda+\abs{q}^2)^{-s}
    &=
    \frac{1}{\Gamma(s)}
    \int_0^\infty t^{s-1}e^{-t\lambda}
    \left[
        \int_{\R^3}\frac{\dd^3q}{(2\pi)^3}e^{-t\abs{q}^2}
    \right]\dd t                                                   \\
    &=
    \frac{1}{(4\pi)^{3/2}\Gamma(s)}
    \int_0^\infty t^{s-1-3/2}e^{-t\lambda}\dd t                  \\
    &=
    \frac{1}{(4\pi)^{3/2}}
    \frac{\Gamma(s-3/2)}{\Gamma(s)}
    \lambda^{3/2-s} .
\end{align*}
Substitution into the spectral integral proves
\eqref{eq:TsA-evaluated-section04}.  For \(s=5/2\),
\[
    \frac{1}{(4\pi)^{3/2}}\frac{\Gamma(1)}{\Gamma(5/2)}
    =
    \frac{1}{6\pi^2},
\]
which gives \eqref{eq:riesz-five-halves-abstract}.
\end{proof}

Applying Lemma~\ref{lem:hilbert-space-riesz-integral} with \(A=\LMx\) gives
the finite-volume Maxwell identity
\begin{equation}
    \int_{\R^3}\frac{\dd^3 q}{(2\pi)^3}
    (\LMx+\abs{q}^2)^{-5/2}
    =
    \frac{1}{6\pi^2}\LMx^{-1}.
    \label{eq:riesz-five-halves-LMx}
\end{equation}
This is the deterministic codimension-three reduction used below.

\subsection{The ambient product operator and transverse restriction}
\label{subsec:ambient-product-brane-restriction}

Let \(y\in\R^3\) denote the three auxiliary transverse coordinates.  The ambient
Hilbert space is
\begin{equation}
    \mathcal H_{\mathcal M}
    :=
    \Hphys\otimes L^2(\R^3_y),
    \label{eq:ambient-H-def-section04}
\end{equation}
and the ambient product operator is
\begin{equation}
    \Lamb
    :=
    \LMx\otimes I
    +
    I\otimes(-\Delta_y).
    \label{eq:ambient-product-operator-def}
\end{equation}
The Maxwell constraints and boundary conditions are already built into the
factor \(\Hphys\); the transverse factor is only the Euclidean auxiliary factor
used for the Riesz reduction.

\begin{proposition}[Ambient product operator]
\label{prop:ambient-product-operator-section04}
The operator \(\Lamb\) in \eqref{eq:ambient-product-operator-def} is positive
self-adjoint on \(\mathcal H_{\mathcal M}\).  Under Fourier transform in the
transverse variable \(y\), it is represented as the direct integral
\begin{equation}
    \widehat{\Lamb}
    =
    \int_{\R^3}^{\oplus}
    (\LMx+\abs{q}^2)\,
    \frac{\dd^3q}{(2\pi)^3} .
    \label{eq:direct-integral-Lamb}
\end{equation}
Consequently,
\begin{equation}
    \widehat{\Lamb^{-5/2}}
    =
    \int_{\R^3}^{\oplus}
    (\LMx+\abs{q}^2)^{-5/2}\,
    \frac{\dd^3q}{(2\pi)^3} .
    \label{eq:direct-integral-Lamb-minus-five-halves}
\end{equation}
\end{proposition}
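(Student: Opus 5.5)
The plan is to build the statement from the standard theory of tensor products of self-adjoint operators together with the partial Fourier transform in \(y\), and then apply Borel functional calculus fiberwise. Two facts are already available. First, \(\LMx\) is positive self-adjoint on \(\Hphys\) with \(\LMx\ge\ell_{L,a}I\) (Corollary~\ref{cor:finite-volume-maxwell-spectral-hypotheses}). Second, \(-\Delta_y\) is nonnegative self-adjoint on \(L^2(\R^3_y)\) with domain \(H^2(\R^3)\), and the Fourier transform \(\mathcal F_y\) unitarily conjugates it to multiplication by \(\abs q^2\).

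\textbf{Step 1: the operator is unitarily a multiplication operator.} Let \(U:=I\otimes\mathcal F_y\). It maps \(\Hphys\otimes L^2(\R^3_y)\) unitarily onto
\[
    \Hphys\otimes L^2\paren{\R^3_q,\tfrac{\dd^3q}{(2\pi)^3}}\cong L^2\paren{\R^3_q,\tfrac{\dd^3q}{(2\pi)^3};\Hphys},
\]
the Bochner space of \(\Hphys\)-valued square-integrable functions. On this space, define the multiplication operator
\[
    (M\psi)(q):=(\LMx+\abs q^2)\psi(q)
\]
on the maximal domain
\[
    \Dom(M)=\set{\psi:\ \psi(q)\in\Dom(\LMx)\text{ a.e.},\ (\LMx+\abs q^2)\psi\in L^2} .
\]

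\textbf{Step 2: \(M\) is self-adjoint and positive.} Rather than handling \(M\) directly, I would use the spectral theorem in multiplication form for \(\LMx\). In the present setting this is explicit: \(\LMx\) is diagonal in the orthonormal eigenbasis from Section~\ref{sec:fourier-domain-heat-trace}, with eigenvalues \(\lambda_j\ge\ell_{L,a}\). Under the combined unitary (eigenbasis)\(\otimes\mathcal F_y\), the space becomes \(\bigoplus_j L^2(\R^3_q)\), and \(M\) becomes multiplication by the real measurable function \((j,q)\mapsto\lambda_j+\abs q^2\ge\ell_{L,a}\). A maximal multiplication operator by a real measurable function is self-adjoint, and this one is bounded below by \(\ell_{L,a}>0\). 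So \(M\) is positive self-adjoint.

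\textbf{Step 3: identify \(M\) with the conjugated \(\Lamb\).} On the algebraic tensor product \(\Dom(\LMx)\odot H^2(\R^3)\), one checks directly that \(U\Lamb U^{-1}\) agrees with \(M\). This domain is a core for \(M\), since finite eigenbasis sums with Schwartz coefficients are dense in the graph norm. Hence the closure of \(\Lamb\) defined on the algebraic tensor product is \(U^{-1}MU\). This closure is the standard definition of the sum \(\LMx\otimes I+I\otimes(-\Delta_y)\) as a self-adjoint operator (essential self-adjointness of sums of commuting nonnegative tensor factors, as in Reed--Simon). This proves self-adjointness and positivity of \(\Lamb\), and gives \eqref{eq:direct-integral-Lamb} with \(\widehat{\Lamb}:=U\Lamb U^{-1}\).

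\textbf{Step 4: the functional-calculus consequence.} Functional calculus commutes with unitary equivalence. For a multiplication operator by \(h(j,q)\), a Borel function \(F\) acts as multiplication by \(F(h)\). Taking \(F(\lambda)=\lambda^{-5/2}\), which is bounded on \([\ell_{L,a},\infty)\), the operator \(\widehat{\Lamb^{-5/2}}\) is multiplication by \((\lambda_j+\abs q^2)^{-5/2}\). Reassembling the \(j\)-sum fiberwise gives the bounded decomposable operator \(q\mapsto(\LMx+\abs q^2)^{-5/2}\). That fiber family is norm-continuous in \(q\), with norm at most \(\ell_{L,a}^{-5/2}\), so it is measurable. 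This yields \eqref{eq:direct-integral-Lamb-minus-five-halves}.

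\textbf{Main obstacle.} The only delicate point is domain bookkeeping, namely confirming that \(\Lamb\) means the closure of the algebraic sum and that this closure equals the maximal multiplication operator \(M\). Working in the joint eigenbasis/Fourier picture reduces this to the elementary fact that a multiplication operator on its maximal domain is self-adjoint and that compactly supported finite sums form a core. I would present it that way to avoid abstract direct-integral measurability issues.
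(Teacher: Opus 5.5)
Your proposal is correct and follows essentially the same route as the paper: use the orthonormal eigenbasis of \(\LMx\) to reduce \(\Lamb\) to \(\bigoplus_j(\lambda_j-\Delta_y)\), Fourier transform in \(y\) to obtain multiplication by \(\lambda_j+\abs{q}^2\), and read off positivity, self-adjointness, and the \((-5/2)\)-power from the spectral theorem. Your additional bookkeeping makes explicit what the paper leaves implicit, namely that the algebraic tensor product is a core, that \(\Lamb\) is the closure of the algebraic sum, and that this closure equals the maximal multiplication operator.
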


\begin{proof}
By Proposition~\ref{prop:compact-resolvent-LMx}, \(\LMx\) has an orthonormal
eigenbasis \(\{u_j\}_{j\geq1}\) in \(\Hphys\), with
\[
    \LMx u_j=\lambda_j u_j,
    \qquad
    0<\ell_{L,a}\leq \lambda_1\leq\lambda_2\leq\cdots,
    \qquad
    \lambda_j\to\infty .
\]
Thus
\[
    \mathcal H_{\mathcal M}
    \cong
    \bigoplus_{j=1}^{\infty}L^2(\R^3_y),
    \qquad
    \Lamb
    \cong
    \bigoplus_{j=1}^{\infty}(\lambda_j-\Delta_y).
\]
Each operator \(\lambda_j-\Delta_y\) is positive self-adjoint.  The direct-sum
operator is therefore positive self-adjoint.  Fourier transform in \(y\) sends
\(-\Delta_y\) to multiplication by \(\abs{q}^2\), which gives the direct-integral
representation \eqref{eq:direct-integral-Lamb}.  Formula
\eqref{eq:direct-integral-Lamb-minus-five-halves} follows from the spectral
theorem.
\end{proof}

Let \(\kappa>0\) be the normalization of the ambient Riesz mediator and set
\begin{equation}
    g:=\frac{\kappa}{6\pi^2} .
    \label{eq:g-def-section04}
\end{equation}
Define the transversely reduced Maxwell Riesz mediator by the operator-norm
integral
\begin{equation}
    \VMx
    :=
    \kappa
    \int_{\R^3}\frac{\dd^3 q}{(2\pi)^3}
    (\LMx+\abs{q}^2)^{-5/2} .
    \label{eq:VMx-def-section04}
\end{equation}
By \eqref{eq:riesz-five-halves-LMx},
\begin{equation}
    \boxed{\VMx=g\LMx^{-1}.}
    \label{eq:VMx-equals-gLMxinv-section04}
\end{equation}
This is the Maxwell analogue of the scalar codimension-three Riesz reduction,
with \(\LMx\) replacing the corresponding scalar plate operator.

The same operator is obtained from \(\kappa\Lamb^{-5/2}\) by a canonical
mollified transverse restriction.  This avoids any illegal point evaluation in
the transverse \(L^2\)-factor.

\begin{proposition}[Mollified transverse restriction]
\label{prop:mollified-brane-restriction-Mx}
Let \(\eta\in C_c^\infty(\R^3)\) satisfy
\[
    \int_{\R^3}\eta(y)\dd y=1,
\]
and set
\[
    \eta_\eps(y):=\eps^{-3}\eta(y/\eps),
    \qquad \eps>0 .
\]
For \(J\in\Hphys\), define
\[
    R_\eps J:=J\otimes \eta_\eps
    \in\mathcal H_{\mathcal M} .
\]
Then for all \(J,K\in\Hphys\),
\begin{equation}
    \lim_{\eps\to0^+}
    \inner{R_\eps J}{\kappa\Lamb^{-5/2}R_\eps K}_{\mathcal H_{\mathcal M}}
    =
    \inner{J}{\VMx K}_{\Hphys}
    =
    \inner{J}{g\LMx^{-1}K}_{\Hphys} .
    \label{eq:mollified-brane-restriction-limit}
\end{equation}
The limit is independent of the normalized mollifier \(\eta\).
\end{proposition}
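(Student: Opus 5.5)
The plan is to compute the pairing in the transverse Fourier representation of Proposition~\ref{prop:ambient-product-operator-section04}. I would then pass to the limit \(\eps\to0^+\) by dominated convergence, and finally identify the limit with \eqref{eq:VMx-def-section04}.

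First, \(\Lamb\geq \ell_{L,a}\), so \(\Lamb^{-5/2}\) is bounded and the pairing is well defined. Since \(R_\eps J=J\otimes\eta_\eps\), Plancherel in \(y\) together with the direct-integral formula \eqref{eq:direct-integral-Lamb-minus-five-halves} gives
\[
    \inner{R_\eps J}{\kappa\Lamb^{-5/2}R_\eps K}_{\mathcal H_{\mathcal M}}
    =
    \kappa\int_{\R^3}\frac{\dd^3q}{(2\pi)^3}
    \abs{\widehat{\eta}(\eps q)}^2
    \inner{J}{(\LMx+\abs q^2)^{-5/2}K}_{\Hphys},
\]
where \(\widehat{\eta_\eps}(q)=\widehat\eta(\eps q)\). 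To make this rigorous I would expand \(J,K\) in the eigenbasis \(\{u_j\}\) of \(\LMx\). On each summand \(L^2(\R^3_y)\), the operator \((\lambda_j-\Delta_y)^{-5/2}\) is a Fourier multiplier, so the identity holds summand by summand. Summing over \(j\) is justified by the uniform bound \((\ell_{L,a}+\abs q^2)^{-5/2}\).

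Second, I would pass to the limit. We have \(\abs{\widehat\eta(\eps q)}\le\norm{\eta}_{L^1}\), which is a constant independent of \(\eps\), and \(\widehat\eta(\eps q)\to\widehat\eta(0)=\int\eta=1\) pointwise. The integrand is also dominated by \(\norm{\eta}_{L^1}^2\norm{J}\norm{K}(\ell_{L,a}+\abs q^2)^{-5/2}\), which is integrable because \(5/2>3/2\). Dominated convergence then yields
\[
    \kappa\int_{\R^3}\frac{\dd^3q}{(2\pi)^3}\inner{J}{(\LMx+\abs q^2)^{-5/2}K}.
\]
Because the operator-norm Bochner integral of Lemma~\ref{lem:hilbert-space-riesz-integral} commutes with the bounded functional \(T\mapsto\inner{J}{TK}\), this equals \(\inner{J}{\VMx K}\). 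By \eqref{eq:VMx-equals-gLMxinv-section04} it also equals \(\inner{J}{g\LMx^{-1}K}\). The limit does not depend on \(\eta\), since only \(\widehat\eta(0)=1\) enters.

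The main obstacle is the first step: one must check carefully that the tensor-product pairing really equals the weighted \(q\)-integral, with a bound on the weight uniform in \(\eps\). The finite-volume gap is exactly what supplies the \(\eps\)-independent integrable majorant. Without the gap, the \(q\)-integral near \(q=0\) could fail to converge uniformly on the spectrum.
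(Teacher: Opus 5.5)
Your proposal is correct and follows essentially the same route as the paper. Like the paper, you write the pairing via the transverse Fourier/direct-integral representation as a $q$-integral weighted by $\abs{\widehat\eta(\eps q)}^2$, pass to the limit by dominated convergence using the gap-supplied majorant $(\ell_{L,a}+\abs{q}^2)^{-5/2}$ and $\widehat\eta(0)=1$, and identify the limit with $\inner{J}{\VMx K}=\inner{J}{g\LMx^{-1}K}$; your eigenbasis justification of the first step just makes explicit what the paper takes directly from Proposition~\ref{prop:ambient-product-operator-section04}.
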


\begin{proof}
Use the Fourier transform convention in the transverse variable for which
Plancherel has measure \((2\pi)^{-3}\dd^3q\).  Then
\(\widehat{\eta_\eps}(q)=\widehat\eta(\eps q)\), and
Proposition~\ref{prop:ambient-product-operator-section04} gives
\begin{align*}
&\inner{R_\eps J}{\kappa\Lamb^{-5/2}R_\eps K}_{\mathcal H_{\mathcal M}} \\
&\qquad =
\kappa
\int_{\R^3}\frac{\dd^3q}{(2\pi)^3}
\abs{\widehat\eta(\eps q)}^2
\inner{J}{(\LMx+\abs{q}^2)^{-5/2}K}_{\Hphys} .
\end{align*}
Because \(\widehat\eta\) is bounded and \(\LMx\geq\ell_{L,a}I\),
\[
    \abs{\inner{J}{(\LMx+\abs{q}^2)^{-5/2}K}}
    \leq
    \norm{J}\norm{K}(\ell_{L,a}+\abs{q}^2)^{-5/2},
\]
and the majorant is integrable over \(\R^3\).  Also \(\widehat\eta(0)=1\).
Dominated convergence therefore gives
\begin{align*}
    \lim_{\eps\to0^+}
    \inner{R_\eps J}{\kappa\Lamb^{-5/2}R_\eps K}
    &=
    \kappa
    \int_{\R^3}\frac{\dd^3q}{(2\pi)^3}
    \inner{J}{(\LMx+\abs{q}^2)^{-5/2}K} \\
    &=
    \inner{J}{\VMx K}.
\end{align*}
Equation~\eqref{eq:VMx-equals-gLMxinv-section04} gives the final equality.
Since only \(\widehat\eta(0)=1\) enters the limiting value, the limit is
independent of the particular mollifier.
\end{proof}

\begin{remark}[Meaning of the ambient mediator]
\label{rem:fractional-ambient-mediator-Mx}
The operator \(\Lamb^{-5/2}\) is a fractional Riesz-type mediator on the product
space \(\Hphys\otimes L^2(\R^3_y)\).  It is not the ordinary Green operator
\(\Lamb^{-1}\) of a local higher-dimensional Maxwell theory.  Its role here is
narrower: after transverse restriction, the exponent \(5/2\) produces the
ordinary reduced Maxwell Green operator \(\LMx^{-1}\) on the physical plate
Hilbert space.
\end{remark}

\subsection{The prescribed Maxwell Gaussian source}
\label{subsec:maxwell-gaussian-source}

Let \(\{u_j\}_{j\geq1}\) be an orthonormal eigenbasis of \(\LMx\) in
\(\Hphys\),
\[
    \LMx u_j=\lambda_j u_j,
    \qquad \lambda_j>0 .
\]
Let \(\XiNoise\) denote cylindrical white noise on \(\Hphys\), formally
\begin{equation}
    \XiNoise=\sum_{j\geq1}\xi_j u_j .
    \label{eq:cylindrical-white-noise-Mx}
\end{equation}
For the complex Hilbert-space convention, the coefficients may be taken as
centered circular complex Gaussians satisfying
\[
    \E[\xi_i\overline{\xi_j}]=\delta_{ij},
    \qquad
    \E[\xi_i\xi_j]=0 .
\]
The real convention gives the same covariance formulas with complex conjugates
removed.  The formal series \eqref{eq:cylindrical-white-noise-Mx} is not, in
general, an \(\Hphys\)-valued random variable; see
\cite{Janson1997} for the Hilbert-space Gaussian background.

For \(\tau>0\), define
\begin{equation}
    \boxed{
    \Sigmatau
    :=
    \paren{\frac{\hbarc}{g}}^{1/2}
    \LMx^{3/4}e^{-\tau\LMx/2}\XiNoise .
    }
    \label{eq:Sigma-tau-def-section04}
\end{equation}
Equivalently,
\begin{equation}
    \Sigmatau
    =
    \paren{\frac{\hbarc}{g}}^{1/2}
    \sum_{j\geq1}
    \lambda_j^{3/4}e^{-\tau\lambda_j/2}\xi_j u_j .
    \label{eq:Sigma-tau-components}
\end{equation}

\begin{proposition}[Heat-regularized Maxwell source]
\label{prop:maxwell-source-hilbert-valued}
For every \(\tau>0\), the series \eqref{eq:Sigma-tau-components} converges in
\(L^2(\Omega_{\mathrm{prob}};\Hphys)\).  Hence \(\Sigmatau\) is an
\(\Hphys\)-valued centered Gaussian random variable.  Its covariance operator is
\begin{equation}
    \Ctau
    :=
    \E[\Sigmatau\otimes\Sigmatau^*]
    =
    \frac{\hbarc}{g}\LMx^{3/2}e^{-\tau\LMx} .
    \label{eq:Ctau-def-section04}
\end{equation}
In particular \(\Ctau\) is positive and trace class.
\end{proposition}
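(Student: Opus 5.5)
The plan is to work entirely in the eigenbasis \(\{u_j\}\) of \(\LMx\) supplied by Proposition~\ref{prop:compact-resolvent-LMx}, and to reduce every claim to a scalar series controlled by the heat-trace bound of Proposition~\ref{prop:heat-trace-admissibility}.

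\textbf{Step 1: convergence in \(L^2(\Omega_{\mathrm{prob}};\Hphys)\).} For finite \(N<M\), write \(S_N\) for the partial sum of \eqref{eq:Sigma-tau-components}. Orthonormality of the \(u_j\) and \(\E[\xi_i\overline{\xi_j}]=\delta_{ij}\) give
\[
    \E\norm{S_M-S_N}^2
    =
    \frac{\hbarc}{g}\sum_{j=N+1}^{M}\lambda_j^{3/2}e^{-\tau\lambda_j}.
\]
The full series \(\sum_j\lambda_j^{3/2}e^{-\tau\lambda_j}\) equals \(\Tr(\LMx^{3/2}e^{-\tau\LMx})\), which is finite by \eqref{eq:needed-heat-traces-finite}. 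Hence the tails tend to zero, the partial sums are Cauchy in \(L^2(\Omega_{\mathrm{prob}};\Hphys)\), and \(\Sigmatau\) is well defined as their limit.

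\textbf{Step 2: Gaussianity and centering.} For each \(h\in\Hphys\) and each \(N\), the scalar \(\inner{S_N}{h}\) is a finite linear combination of jointly (circular) Gaussian variables, so it is centered Gaussian. \(L^2\)-limits of centered Gaussian variables are again centered Gaussian, because their characteristic functions converge. Therefore every linear functional \(\inner{\Sigmatau}{h}\) is centered Gaussian, and \(\Sigmatau\) is an \(\Hphys\)-valued centered Gaussian random variable.

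\textbf{Step 3: the covariance.} For \(h,k\in\Hphys\), continuity of the inner product in \(L^2(\Omega_{\mathrm{prob}})\) lets us pass the limit inside the expectation:
\[
    \E\bigl[\inner{\Sigmatau}{h}\,\overline{\inner{\Sigmatau}{k}}\bigr]
    =
    \lim_{N\to\infty}
    \frac{\hbarc}{g}\sum_{j\le N}\lambda_j^{3/2}e^{-\tau\lambda_j}
    \inner{u_j}{h}\overline{\inner{u_j}{k}}.
\]
The right-hand side is \(\inner{\Ctau k}{h}\) with \(\Ctau\) as in \eqref{eq:Ctau-def-section04}, by the spectral theorem. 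In the real convention the same computation goes through with the conjugates removed.

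\textbf{Step 4: positivity and trace class.} \(\Ctau\) is diagonal in the basis \(\{u_j\}\) with nonnegative eigenvalues \(\frac{\hbarc}{g}\lambda_j^{3/2}e^{-\tau\lambda_j}\), which are strictly positive because \(\lambda_j\ge\ell_{L,a}>0\). Its trace is \(\frac{\hbarc}{g}\Tr(\LMx^{3/2}e^{-\tau\LMx})<\infty\), so \(\Ctau\) is positive and trace class.

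\textbf{Main obstacle.} There is no serious obstacle, since the analytic input is already contained in Proposition~\ref{prop:heat-trace-admissibility}. The only points needing care are bookkeeping ones. First, the operator trace on \(\Hphys\) must be identified with the eigenvalue sum counted with multiplicity, which follows from the block decomposition behind Proposition~\ref{prop:diagonal-trace-formula}. Second, the stability of Gaussianity under \(L^2\)-limits must be justified; I would do this via characteristic functions.
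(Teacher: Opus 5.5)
Your proposal is correct and follows essentially the same route as the paper. Both expand in the eigenbasis of \(\LMx\), control \(\E\norm{\Sigmatau}^2\) by \((\hbarc/g)\Tr(\LMx^{3/2}e^{-\tau\LMx})\) via heat-trace admissibility with \(p=3/2\), and read off \(\Ctau\) as the diagonal operator with eigenvalues \((\hbarc/g)\lambda_j^{3/2}e^{-\tau\lambda_j}\). Your Cauchy-tail estimate and the characteristic-function argument for Gaussianity make explicit two steps that the paper asserts without detail.
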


\begin{proof}
By orthonormality and the Gaussian normalization,
\begin{align*}
    \E\norm{\Sigmatau}_{\Hphys}^2
    &=
    \frac{\hbarc}{g}
    \sum_{j\geq1}\lambda_j^{3/2}e^{-\tau\lambda_j}          \\
    &=
    \frac{\hbarc}{g}
    \Tr\paren{\LMx^{3/2}e^{-\tau\LMx}} .
\end{align*}
The trace is finite by \eqref{eq:section04-heat-trace-assumption}, with
\(p=3/2\).  This proves convergence in
\(L^2(\Omega_{\mathrm{prob}};\Hphys)\) and therefore defines an
\(\Hphys\)-valued centered Gaussian random variable.

The covariance operator is characterized by its matrix entries in the
orthonormal basis \(\{u_j\}\).  With the complex convention stated above,
\[
    \E\!\brac{
        \inner{u_i}{\Sigmatau}
        \overline{\inner{u_j}{\Sigmatau}}
    }
    =
    \frac{\hbarc}{g}
    \lambda_j^{3/2}e^{-\tau\lambda_j}\delta_{ij},
\]
with the analogous formula in the real convention.  Hence \(\Ctau\) is diagonal
in this basis with eigenvalues
\((\hbarc/g)\lambda_j^{3/2}e^{-\tau\lambda_j}\), which is exactly the spectral
operator \((\hbarc/g)\LMx^{3/2}e^{-\tau\LMx}\).  Its trace is the finite number
computed above.
\end{proof}

\begin{remark}[Prescribed covariance]
\label{rem:maxwell-covariance-prescribed}
The covariance \eqref{eq:Ctau-def-section04} is prescribed.  It is the Maxwell
analogue of the scalar covariance chosen so that pairing with the induced Green
operator converts the multiplier \(\LMx^{3/2}\) into \(\LMx^{1/2}\).  Thus the
present section proves the resulting representation theorem for this prescribed
covariance.
\end{remark}

\begin{remark}[Projection is spectral, not a flat-space Fourier projector]
\label{rem:no-flat-projector}
The projection used here is the spectral projection encoded by \(\Hphys\) and
\(\LMx\) in the bounded plate geometry.  No separate flat-space tensor
projector \(\delta_{ij}-q_iq_j/\abs{q}^2\) enters the construction.  The
physical transversality and boundary conditions are encoded by the Hilbert
space \(\Hphys\) and the operator \(\LMx\).  The
covariance \eqref{eq:Ctau-def-section04} is therefore the spectral covariance
on the already reduced Maxwell Hilbert space.
\end{remark}

\subsection{The regulated quadratic-form trace identity}
\label{subsec:regulated-maxwell-trace-identity}

Define the regulated Maxwell quadratic energy by
\begin{equation}
    \boxed{
    \UtauMx
    :=
    \frac12
    \inner{\Sigmatau}{g\LMx^{-1}\Sigmatau}_{\Hphys} .
    }
    \label{eq:Utau-Mx-def-section04}
\end{equation}
For finite \(L\), the operator \(g\LMx^{-1}\) is bounded and positive.  Hence
\(\UtauMx\) is well-defined almost surely and is nonnegative.  Here the inner
product is the Hermitian inner product on \(\Hphys\), so the positive operator
\(g\LMx^{-1}\) defines a nonnegative quadratic form.  Moreover,
\[
    \E\abs{\UtauMx}
    \leq
    \frac{g}{2}\norm{\LMx^{-1}}\,
    \E\norm{\Sigmatau}_{\Hphys}^2
    <\infty .
\]

\begin{theorem}[Regulated Maxwell quadratic-form representation]
\label{thm:regulated-maxwell-quadratic-trace}
For every \(\tau>0\),
\begin{equation}
    \boxed{
    \E[\UtauMx]
    =
    \frac{\hbarc}{2}
    \Trphys\paren{\LMx^{1/2}e^{-\tau\LMx}} .
    }
    \label{eq:regulated-maxwell-trace-identity}
\end{equation}
The trace on the right is finite.
\end{theorem}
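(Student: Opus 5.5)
The plan is to compute the expectation mode by mode in the orthonormal eigenbasis \(\{u_j\}\) of \(\LMx\) from Proposition~\ref{prop:maxwell-source-hilbert-valued}, and then identify the resulting series with the trace. First, truncate the series \eqref{eq:Sigma-tau-components}: let \(\Sigma_\tau^{(N)}\) be the sum over \(j\le N\), and set \(U^{(N)}:=\frac12\inner{\Sigma_\tau^{(N)}}{g\LMx^{-1}\Sigma_\tau^{(N)}}\). Since \(\LMx^{-1}u_j=\lambda_j^{-1}u_j\) and the \(u_j\) are orthonormal,
\[
    U^{(N)}=\frac{\hbarc}{2}\sum_{j\le N}\lambda_j^{3/2}e^{-\tau\lambda_j}\lambda_j^{-1}\abs{\xi_j}^2
    =\frac{\hbarc}{2}\sum_{j\le N}\lambda_j^{1/2}e^{-\tau\lambda_j}\abs{\xi_j}^2 .
\]
Using \(\E\abs{\xi_j}^2=1\) (in either the complex or real convention), this gives \(\E[U^{(N)}]=\frac{\hbarc}{2}\sum_{j\le N}\lambda_j^{1/2}e^{-\tau\lambda_j}\).

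Next, pass to the limit \(N\to\infty\). The quadratic form \(E\mapsto\frac12\inner{E}{g\LMx^{-1}E}\) is continuous on \(\Hphys\) because \(\norm{\LMx^{-1}}\le\ell_{L,a}^{-1}\) by Proposition~\ref{prop:finite-volume-gap}. There are two ways to make the exchange of limit and expectation rigorous:
\begin{itemize}
\item \emph{Monotone convergence.} The summands of \(U^{(N)}\) are nonnegative, so \(U^{(N)}\uparrow U_\tau^{\mathrm{Mx}}\) almost surely along the \(L^2\)-convergent (hence a.s. convergent along a subsequence, and here monotone) partial sums.
\item \emph{An \(L^1\) estimate.} Directly,
\[
    \E\abs{U^{(N)}-\UtauMx}\le \frac{g}{2}\norm{\LMx^{-1}}\,\E\bigl[\norm{\Sigma_\tau^{(N)}-\Sigmatau}\,(\norm{\Sigma_\tau^{(N)}}+\norm{\Sigmatau})\bigr],
\]
and by Cauchy--Schwarz this tends to zero, because \(\Sigma_\tau^{(N)}\to\Sigmatau\) in \(L^2(\Omega_{\mathrm{prob}};\Hphys)\) by Proposition~\ref{prop:maxwell-source-hilbert-valued}.
\end{itemize}
Either route yields \(\E[\UtauMx]=\frac{\hbarc}{2}\sum_j\lambda_j^{1/2}e^{-\tau\lambda_j}\).

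Finally, identify the series with the trace. The sum equals \(\Trphys(\LMx^{1/2}e^{-\tau\LMx})\) because \(\LMx^{1/2}e^{-\tau\LMx}\) is a positive operator diagonal in the basis \(u_j\). The trace is finite by Proposition~\ref{prop:heat-trace-admissibility} with \(p=1/2\), or equivalently by the bound \(\lambda^{1/2}\le\ell_{L,a}^{-1}\lambda^{3/2}\).

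An equivalent operator-theoretic formulation, which I could give as a remark, is the identity \(\E\inner{\Sigma}{B\Sigma}=\Tr(B\Ctau)\) for bounded \(B\) and trace-class covariance. Applied with \(B=g\LMx^{-1}\), it gives \(\Tr\bigl(g\LMx^{-1}\cdot\frac{\hbarc}{g}\LMx^{3/2}e^{-\tau\LMx}\bigr)\) by functional calculus. The main, if mild, obstacle is the justification of the limit interchange in the second step. I would rely on monotonicity, since nonnegativity of every term makes monotone convergence the cleanest route and avoids any appeal to Fubini-type arguments on the noise.
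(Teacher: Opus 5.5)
Your proposal is correct and follows essentially the same route as the paper. The paper writes \(\UtauMx=\frac{g}{2}\sum_j\lambda_j^{-1}\abs{\inner{u_j}{\Sigmatau}}^2\), uses \(\E\abs{\inner{u_j}{\Sigmatau}}^2=(\hbarc/g)\lambda_j^{3/2}e^{-\tau\lambda_j}\), exchanges sum and expectation by monotone convergence of the nonnegative terms, and gets finiteness from the heat-trace bound with \(p=1/2\). Your \(U^{(N)}\) is exactly the \(N\)-th partial sum of that series wherever \(\Sigmatau\in\Hphys\), so the subsequence remark is unnecessary; your alternative \(L^1\) estimate and your covariance-trace remark (which the paper also records right after its proof) are both valid.
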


\begin{proof}
Using the eigen-expansion \eqref{eq:Sigma-tau-components},
\[
    \UtauMx
    =
    \frac{g}{2}
    \sum_{j\geq1}\lambda_j^{-1}
    \abs{\inner{u_j}{\Sigmatau}}^2 .
\]
For each \(j\),
\[
    \E\abs{\inner{u_j}{\Sigmatau}}^2
    =
    \frac{\hbarc}{g}\lambda_j^{3/2}e^{-\tau\lambda_j} .
\]
Since the terms are nonnegative, monotone convergence gives
\begin{align*}
    \E[\UtauMx]
    &=
    \frac{g}{2}
    \sum_{j\geq1}\lambda_j^{-1}
    \frac{\hbarc}{g}\lambda_j^{3/2}e^{-\tau\lambda_j} \\
    &=
    \frac{\hbarc}{2}
    \sum_{j\geq1}\lambda_j^{1/2}e^{-\tau\lambda_j}       \\
    &=
    \frac{\hbarc}{2}
    \Trphys\paren{\LMx^{1/2}e^{-\tau\LMx}} .
\end{align*}
The final trace is finite by \eqref{eq:section04-heat-trace-assumption}, with
\(p=1/2\).
\end{proof}

Equivalently, the same calculation can be written as the Hilbert-space
covariance trace identity
\begin{equation}
    \E[\UtauMx]
    =
    \frac12\Tr\paren{g\LMx^{-1}\Ctau}
    =
    \frac{\hbarc}{2}\Trphys\paren{\LMx^{1/2}e^{-\tau\LMx}} .
    \label{eq:covariance-trace-identity-Mx}
\end{equation}
Here \(g\LMx^{-1}\) is bounded in finite volume and \(\Ctau\) is trace class.

\begin{remark}[Regulated identity, not yet the plate finite part]
\label{rem:regulated-not-yet-finite-part}
Theorem~\ref{thm:regulated-maxwell-quadratic-trace} is an identity for every
fixed \(\tau>0\) and finite \(L\).  The trace evaluation requires the Maxwell
TE/TM spectral decomposition, and the electromagnetic plate coefficient
requires the standard parallel-plate interaction finite-part prescription.
Those steps are carried out in the next sections.
\end{remark}

\begin{remark}[Finite parts of positive regulated quantities]
\label{rem:positive-regulated-negative-fp-Mx}
For each fixed \(\tau>0\), \(\UtauMx\geq0\) almost surely.  A later
renormalized interaction finite part can nevertheless be negative, because it
is obtained after subtracting divergent local and separation-independent terms
from the regulated expectation.  This is the usual situation in Casimir
finite-part calculations and is not a contradiction.
\end{remark}

\subsection{Summary of the finite-volume representation}
\label{subsec:finite-volume-representation-summary}

Combining \eqref{eq:VMx-equals-gLMxinv-section04},
\eqref{eq:Sigma-tau-def-section04}, and
Theorem~\ref{thm:regulated-maxwell-quadratic-trace}, we have proved the
finite-volume representation
\begin{equation}
    \VMx=g\LMx^{-1},
    \qquad
    \Sigmatau
    =
    \paren{\frac{\hbarc}{g}}^{1/2}
    \LMx^{3/4}e^{-\tau\LMx/2}\XiNoise,
    \label{eq:combined-representation-04}
\end{equation}
with
\begin{equation}
    \Ctau
    =
    \E[\Sigmatau\otimes\Sigmatau^*]
    =
    \frac{\hbarc}{g}\LMx^{3/2}e^{-\tau\LMx},
    \label{eq:combined-covariance-04}
\end{equation}
and
\begin{equation}
    \E\left[
        \frac12\inner{\Sigmatau}{\VMx\Sigmatau}_{\Hphys}
    \right]
    =
    \frac{\hbarc}{2}
    \Trphys\paren{\LMx^{1/2}e^{-\tau\LMx}} .
    \label{eq:combined-trace-identity-04}
\end{equation}
This is the Maxwell analogue of the scalar Riesz/Gaussian trace representation
at fixed finite lateral volume.

\begin{remark}[Use in the next sections]
\label{rem:remaining-after-riesz-gaussian}
The regulated identity \eqref{eq:combined-trace-identity-04} is the starting
point for the next two sections, which identify the physical Maxwell trace and
evaluate its standard parallel-plate interaction finite part.
\end{remark}
\section{The TE/TM trace decomposition}
\label{sec:te-tm-decomposition}

The previous section proved the regulated Maxwell quadratic-form identity
\begin{equation}
    \E[\UtauMx]
    =
    \frac{\hbarc}{2}
    \Trphys\paren{\LMx^{1/2}e^{-\tau\LMx}}
    \qquad (\tau>0)
    \label{eq:te-tm-starting-regulated-trace}
\end{equation}
for the finite slab \(\OmegaLa=\TtwoL\times[0,a]\).  It remains to identify
this physical Maxwell trace with scalar plate traces.  This section proves the
finite-volume spectral equivalence
\begin{equation}
    \LMx \simeq \LD\oplus \LNp,
    \label{eq:LMx-spectral-equivalence-LD-LNp-informal}
\end{equation}
where \(\LD\) is the scalar Dirichlet Laplacian in the plate direction and
\(\LNp\) is the scalar Neumann Laplacian with its constant zero mode removed.
The prime is essential: it corresponds exactly to removing the static normal
Maxwell mode \(\hzero\).
This is the step with no scalar analogue in \cite{KhanKhan2026Scalar}.  The
scalar paper reduces the plate benchmark to scalar traces; the present section
must prove that the physical Maxwell trace is exactly \(\LD\oplus\LNp\), with
the electromagnetic multiplicity coming from Maxwell mode structure rather than
from a hand-inserted factor of two.

The argument is finite-volume and spectral.  We do not use a vector potential
or a gauge-fixed determinant.  The physical degrees of freedom have already
been built into the electric-field Hilbert space \(\Hphys\), and the Fourier
coefficient lemma in Section~\ref{sec:fourier-domain-heat-trace} identifies
that form domain with the mixed sine/cosine coefficient space.  The present
section reorganizes that coefficient-space diagonalization into the standard
TE/TM language and then into scalar Dirichlet and reduced Neumann traces.
The phrase TE/TM is used for the nonzero lateral-momentum blocks; at zero
lateral momentum the labels are noncanonical, and the invariant statement is
the spectral equivalence \(\LMx\simeq \LD\oplus\LNp\).

\subsection{Scalar comparison operators}
\label{subsec:scalar-comparison-operators}

Let \(\LD\) denote the positive scalar Laplacian on \(\OmegaLa\) with periodic
boundary conditions in the lateral variables and Dirichlet boundary conditions
at \(z=0,a\).  Thus the separated orthonormal eigenfunctions are
\begin{equation}
    d_{m,n}(r,z):=\phim(r)\sn(z),
    \qquad
    m\in\Z^2,
    \quad n\geq1,
    \label{eq:dirichlet-scalar-eigenfunctions}
\end{equation}
with eigenvalues
\begin{equation}
    \lambdamn
    =
    \abs{k_m}^2+\nu_n^2,
    \qquad
    k_m=\frac{2\pi}{L}m,
    \quad
    \nu_n=\frac{\pi n}{a} .
    \label{eq:scalar-lambda-mn-section05}
\end{equation}
Let \(\LN\) denote the positive scalar Laplacian with periodic lateral
conditions and Neumann boundary conditions at \(z=0,a\).  Its separated
orthonormal eigenfunctions are
\begin{equation}
    u_{m,n}(r,z):=\phim(r)\cn(z),
    \qquad
    m\in\Z^2,
    \quad n\geq0,
    \label{eq:neumann-scalar-eigenfunctions}
\end{equation}
with the same eigenvalues \(\lambdamn=\abs{k_m}^2+\nu_n^2\), now allowing
\(n=0\).  The mode \(u_{0,0}\) is the scalar constant mode and has eigenvalue
zero.  We write \(\LNp\) for the restriction of \(\LN\) to
\(u_{0,0}^{\perp}\).  Equivalently,
\begin{equation}
    \Tr' f(\LN)
    :=
    \Tr f(\LNp)
    =
    \sum_{\substack{m\in\Z^2,\ n\geq0\\ (m,n)\ne(0,0)}}
    f\paren{\abs{k_m}^2+\nu_n^2},
    \label{eq:prime-neumann-trace-def-section05}
\end{equation}
whenever the sum is absolutely convergent.

Thus
\begin{align}
    \Tr f(\LD)
    &=
    \sum_{m\in\Z^2}\sum_{n=1}^{\infty}
    f\paren{\abs{k_m}^2+\nu_n^2},
    \label{eq:dirichlet-trace-section05}\\
    \Tr' f(\LN)
    &=
    \sum_{m\in\Z^2}\sum_{n=1}^{\infty}
    f\paren{\abs{k_m}^2+\nu_n^2}
    +
    \sum_{m\in\Z^2\setminus\{0\}}
    f\paren{\abs{k_m}^2}.
    \label{eq:neumann-prime-trace-section05}
\end{align}
Adding the two traces gives
\begin{equation}
    \Tr f(\LD)+\Tr' f(\LN)
    =
    2\sum_{m\in\Z^2}\sum_{n=1}^{\infty}
    f\paren{\abs{k_m}^2+\nu_n^2}
    +
    \sum_{m\in\Z^2\setminus\{0\}}
    f\paren{\abs{k_m}^2} .
    \label{eq:LD-plus-LNp-trace-expanded}
\end{equation}
This is the scalar expression to be matched with the Maxwell trace.

\begin{remark}[Complex modes and real fields]
\label{rem:complex-mode-convention-section05}
The paper works over the complex Hilbert space.  Thus the Fourier modes
\(m\) and \(-m\) are distinct basis vectors, as usual in complex spectral
calculus.  A real-field formulation would impose the corresponding conjugation
condition on coefficients, but it gives the same spectral multiplicities and
trace formulas.
\end{remark}

\subsection{Physical Maxwell blocks}
\label{subsec:physical-maxwell-blocks-section05}

We recall the coefficient blocks from Section~\ref{sec:fourier-domain-heat-trace}.  For
\(n\geq1\), the Maxwell coefficients at fixed \((m,n)\) are
\((a_{m,n},b_{m,n})\in\C^2\oplus\C\) subject to the divergence constraint
\begin{equation}
    \ii k_m\cdot a_{m,n}-\nu_n b_{m,n}=0 .
    \label{eq:divergence-block-section05}
\end{equation}
The corresponding block is
\begin{equation}
    W_{m,n}
    =
    \set{(a,b)\in\C^2\oplus\C:
          \ii k_m\cdot a-\nu_n b=0}.
    \label{eq:Wmn-section05}
\end{equation}
Since \(\nu_n>0\), this is a two-dimensional subspace of \(\C^3\).  For
\(n=0\), the only Maxwell coefficient is the normal coefficient \(b_{m,0}\).
The branch \((m,n)=(0,0)\) is absent from \(\Hphys\), because it is the removed
static mode \(\hzero\).  Thus for \(m\ne0\) the \(n=0\) block is one-dimensional.

By Proposition~\ref{prop:diagonal-trace-formula}, \(\LMx\) acts as scalar
multiplication by \(\lambdamn\) on each two-dimensional block \(W_{m,n}\),
\(n\geq1\), and by \(\abs{k_m}^2\) on each one-dimensional \(n=0\) block,
\(m\ne0\).  The present section chooses explicit orthonormal bases for these
blocks.  These bases are the finite-volume TE and TM modes.

\subsection{TE and TM modes for nonzero lateral momentum}
\label{subsec:te-tm-nonzero-lateral-momentum}

Assume first that \(m\ne0\), so that \(k_m\ne0\).  Define the lateral unit
vectors
\begin{equation}
    \khat:=\frac{k_m}{\abs{k_m}},
    \qquad
    \khatperp:=\frac{1}{\abs{k_m}}(-k_{m,2},k_{m,1}) .
    \label{eq:khat-khatperp-def-section05}
\end{equation}
Here \(k_m=(k_{m,1},k_{m,2})\).  For \(n\geq1\), define the TE mode
\begin{equation}
    E^{\mathrm{TE}}_{m,n}(r,z)
    :=
    \phim(r)\sn(z)\,\khatperp .
    \label{eq:TE-mode-def-section05}
\end{equation}
It has coefficient vector
\begin{equation}
    a_{m,n}=\khatperp,
    \qquad
    b_{m,n}=0 .
    \label{eq:TE-coeff-section05}
\end{equation}
The divergence constraint holds because \(k_m\cdot\khatperp=0\).  The PEC
condition holds because the tangential components are proportional to \(\sn\),
which vanishes at the plates.

Define the TM mode by
\begin{equation}
    E^{\mathrm{TM}}_{m,n}(r,z)
    :=
    \phim(r)
    \left[
        -\ii\frac{\nu_n}{\sqrt{\lambdamn}}\,\khat\,\sn(z)
        +
        \frac{\abs{k_m}}{\sqrt{\lambdamn}}\, e_z\,\cn(z)
    \right] .
    \label{eq:TM-mode-def-section05}
\end{equation}
Its coefficient vector is
\begin{equation}
    a_{m,n}
    =
    -\ii\frac{\nu_n}{\sqrt{\lambdamn}}\,\khat,
    \qquad
    b_{m,n}
    =
    \frac{\abs{k_m}}{\sqrt{\lambdamn}} .
    \label{eq:TM-coeff-section05}
\end{equation}
The divergence constraint is
\begin{align*}
    \ii k_m\cdot a_{m,n}-\nu_n b_{m,n}
    &=
    \ii\abs{k_m}
    \left(-\ii\frac{\nu_n}{\sqrt{\lambdamn}}\right)
    -
    \nu_n\frac{\abs{k_m}}{\sqrt{\lambdamn}} \\
    &=0 .
\end{align*}
Again the tangential components are proportional to \(\sn\), so the PEC
condition is satisfied.

The two modes \eqref{eq:TE-mode-def-section05} and
\eqref{eq:TM-mode-def-section05} are orthonormal.  The TE mode has norm one.
The TM mode has norm
\begin{equation}
    \frac{\nu_n^2}{\lambdamn}
    +
    \frac{\abs{k_m}^2}{\lambdamn}
    =1,
    \label{eq:TM-normalization-section05}
\end{equation}
using \(\lambdamn=\abs{k_m}^2+\nu_n^2\).  Their inner product is zero because
\(\khatperp\cdot\khat=0\) and the TE mode has no normal component.  Hence
\(E^{\mathrm{TE}}_{m,n}\) and \(E^{\mathrm{TM}}_{m,n}\) form an orthonormal basis
of the two-dimensional block \(W_{m,n}\) for \(m\ne0\), \(n\geq1\).

Since \(\LMx\) is scalar multiplication by \(\lambdamn\) on the entire block
\(W_{m,n}\), both modes are eigenvectors:
\begin{equation}
    \LMx E^{\mathrm{TE}}_{m,n}
    =
    \lambdamn E^{\mathrm{TE}}_{m,n},
    \qquad
    \LMx E^{\mathrm{TM}}_{m,n}
    =
    \lambdamn E^{\mathrm{TM}}_{m,n} .
    \label{eq:TE-TM-eigenvalues-section05}
\end{equation}

\subsection{The zero lateral-momentum blocks}
\label{subsec:zero-lateral-momentum-blocks-section05}

When \(m=0\) and \(n\geq1\), there is no distinguished lateral direction
\(\khat\).  The divergence constraint reduces to
\begin{equation}
    -\nu_n b_{0,n}=0,
\end{equation}
so \(b_{0,n}=0\), while the two tangential coefficients are free.  A convenient
orthonormal basis is
\begin{equation}
    E^{x}_{0,n}(r,z):=\phi_0(r)\sn(z)e_x,
    \qquad
    E^{y}_{0,n}(r,z):=\phi_0(r)\sn(z)e_y .
    \label{eq:m0-transverse-modes-section05}
\end{equation}
Both modes are divergence-free, satisfy the PEC condition, and have eigenvalue
\begin{equation}
    \lambda_{0,n}=\nu_n^2 .
    \label{eq:m0-eigenvalue-section05}
\end{equation}
The TE/TM labels are not canonical at zero lateral momentum, but the physical
multiplicity is still exactly two.

\subsection{The normal zero-vertical branch}
\label{subsec:normal-zero-vertical-branch-section05}

For \(n=0\), there is no tangential sine mode.  For each \(m\ne0\), define
\begin{equation}
    E^{0}_{m}(r,z):=\phim(r)c_0(z)e_z .
    \label{eq:normal-zero-branch-mode-section05}
\end{equation}
This field is divergence-free because \(\partial_z c_0=0\), and it satisfies
\(n\times E=0\) because it is purely normal to the plates.  Its eigenvalue is
\begin{equation}
    \LMx E^{0}_{m}=\abs{k_m}^2 E^{0}_{m} .
    \label{eq:normal-zero-branch-eigenvalue-section05}
\end{equation}
The missing \(m=0\) member of this branch is \(c_0 e_z\), equivalently
\(\hzero\), the removed static normal mode.  Thus the branch contributes one
physical mode for each \(m\in\Z^2\setminus\{0\}\).

\subsection{Completeness and unitary equivalence}
\label{subsec:completeness-unitary-equivalence-section05}

The preceding modes form a complete orthonormal basis of \(\Hphys\).  Indeed,
for \(m\ne0\), \(n\geq1\), the pair
\(E^{\mathrm{TE}}_{m,n},E^{\mathrm{TM}}_{m,n}\) is an orthonormal basis of the
block \(W_{m,n}\).  For \(m=0\), \(n\geq1\), the pair
\(E^{x}_{0,n},E^{y}_{0,n}\) is an orthonormal basis of the corresponding
block.  For \(n=0\), the vectors \(E^0_m\), \(m\ne0\), span the one-dimensional
normal blocks.  The Fourier coefficient characterization of \(\Hphys\) is the
orthogonal Hilbert direct sum of precisely these blocks, with the static
\((m,n)=(0,0)\) normal block removed.

This gives an explicit unitary equivalence with \(\LD\oplus\LNp\).  Let
\(\mathcal H_D:=L^2(\OmegaLa)\) be the scalar Dirichlet Hilbert space and let
\(\mathcal H_N':=u_{0,0}^{\perp}\subset L^2(\OmegaLa)\) be the reduced Neumann
Hilbert space.  Define a unitary map
\begin{equation}
    \mathcal U:
    \mathcal H_D\oplus\mathcal H_N'
    \longrightarrow
    \Hphys
    \label{eq:unitary-U-def-section05}
\end{equation}
first on the separated scalar eigenbasis by the following rules.

For \(m\ne0\), \(n\geq1\), set
\begin{equation}
    \mathcal U(d_{m,n},0)=E^{\mathrm{TE}}_{m,n},
    \qquad
    \mathcal U(0,u_{m,n})=E^{\mathrm{TM}}_{m,n} .
    \label{eq:unitary-map-nonzero-m-section05}
\end{equation}
For \(m=0\), \(n\geq1\), set
\begin{equation}
    \mathcal U(d_{0,n},0)=E^{x}_{0,n},
    \qquad
    \mathcal U(0,u_{0,n})=E^{y}_{0,n} .
    \label{eq:unitary-map-zero-m-section05}
\end{equation}
For the reduced Neumann \(n=0\) branch, set
\begin{equation}
    \mathcal U(0,u_{m,0})=E^0_m,
    \qquad
    m\in\Z^2\setminus\{0\} .
    \label{eq:unitary-map-normal-branch-section05}
\end{equation}
Because both sides are mapped between complete orthonormal eigenbases, this
extends uniquely to a unitary operator.  Moreover, it intertwines the scalar
direct-sum operator and the Maxwell operator:
\begin{equation}
    \LMx\mathcal U
    =
    \mathcal U(\LD\oplus\LNp)
    \quad
    \text{on the operator domain of }\LD\oplus\LNp .
    \label{eq:intertwining-LMx-LD-LNp-section05}
\end{equation}
Equivalently,
\begin{equation}
    \boxed{
    \LMx
    \simeq
    \LD\oplus\LNp .
    }
    \label{eq:LMx-unitarily-equivalent-LD-LNp}
\end{equation}

\begin{remark}[What is and is not canonical]
\label{rem:noncanonical-zero-m-split-section05}
For \(m\ne0\), the labels TE and TM are tied to the lateral direction \(k_m\),
and the split in \eqref{eq:unitary-map-nonzero-m-section05} is canonical up to
phase conventions.  For \(m=0\), there is no lateral direction, so the split
\eqref{eq:unitary-map-zero-m-section05} is only a convenient choice of basis in
a two-dimensional degenerate eigenspace.  The trace identity below is
independent of that choice.
\end{remark}

\subsection{Trace identity}
\label{subsec:te-tm-trace-identity-section05}

The unitary equivalence immediately gives the trace identity.  We state it in
a form adapted to the later regulated and finite-part calculations.

\begin{theorem}[Maxwell trace as Dirichlet plus reduced Neumann]
\label{thm:maxwell-trace-Dirichlet-Neumann}
Let \(f:[0,\infty)\to\C\) be Borel.  Suppose that the scalar sums in
\eqref{eq:dirichlet-trace-section05} and \eqref{eq:neumann-prime-trace-section05}
converge absolutely.  Then
\begin{equation}
    \boxed{
    \Trphys f(\LMx)
    =
    \Tr f(\LD)+\Tr' f(\LN)
    =
    \Tr f(\LD)+\Tr f(\LNp).
    }
    \label{eq:maxwell-trace-Dirichlet-plus-Neumann}
\end{equation}
Equivalently,
\begin{equation}
    \Trphys f(\LMx)
    =
    2\sum_{m\in\Z^2}\sum_{n=1}^{\infty}
    f\paren{\abs{k_m}^2+\nu_n^2}
    +
    \sum_{m\in\Z^2\setminus\{0\}}
    f\paren{\abs{k_m}^2} .
    \label{eq:maxwell-trace-expanded-section05}
\end{equation}
\end{theorem}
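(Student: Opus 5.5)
The plan is to derive the trace identity directly from the unitary equivalence \eqref{eq:LMx-unitarily-equivalent-LD-LNp}. We then check the expanded form against the already-computed scalar sums \eqref{eq:LD-plus-LNp-trace-expanded} and the diagonal Maxwell formula of Proposition~\ref{prop:diagonal-trace-formula}.

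\textbf{Step 1: transfer $f$ through $\mathcal U$.} Since $\mathcal U$ is unitary and intertwines $\LMx$ with $\LD\oplus\LNp$ by \eqref{eq:intertwining-LMx-LD-LNp-section05}, the Borel functional calculus gives
\[
    f(\LMx)=\mathcal U\,\bigl(f(\LD)\oplus f(\LNp)\bigr)\,\mathcal U^{*}.
\]
For a fully self-contained argument, note that $\mathcal U$ maps each scalar eigenvector to a Maxwell eigenvector with the same eigenvalue. Thus $f(\LMx)\mathcal U v=\mathcal U f(\cdot)v$ on every basis vector, and hence on the whole space.

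\textbf{Step 2: compute the trace in the eigenbasis.} We evaluate $\Trphys f(\LMx)$ in the explicit orthonormal eigenbasis $\{E^{\mathrm{TE}}_{m,n},E^{\mathrm{TM}}_{m,n},E^x_{0,n},E^y_{0,n},E^0_m\}$ of Section~\ref{subsec:completeness-unitary-equivalence-section05}. The operator $f(\LMx)$ is diagonal in this basis, so its trace is the sum of the diagonal entries. Here "trace finite" should be read as absolute summability of the eigenvalue list, which is exactly the hypothesis that the scalar sums converge absolutely. Under this reading the operator is trace class and the sum is independent of the basis.

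\textbf{Step 3: regroup the diagonal entries.} Grouping the entries according to the preimages under $\mathcal U$ gives $\sum_{m,n\ge1}f(\lambdamn)$ from the Dirichlet side. The Neumann side contributes $\sum_{m,n\ge1}f(\lambdamn)+\sum_{m\ne0}f(\abs{k_m}^2)$, which is $\Tr' f(\LN)=\Tr f(\LNp)$ by \eqref{eq:prime-neumann-trace-def-section05}. Absolute convergence justifies this rearrangement. Adding the two pieces yields \eqref{eq:LD-plus-LNp-trace-expanded}, which is \eqref{eq:maxwell-trace-expanded-section05}. As a consistency check, this coincides with Proposition~\ref{prop:diagonal-trace-formula}.

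\textbf{Main obstacle.} The only delicate point is bookkeeping, not analysis. Each $(m,n)$ with $n\ge1$ must receive exactly two modes, including the noncanonical $m=0$ split. The $n=0$ branch must lose precisely the $(0,0)$ mode on both sides, namely $\hzero$ on the Maxwell side and $u_{0,0}$ on the scalar side. This matching was already verified in the completeness paragraph, so the proof reduces to invoking it. We should also remark that basis independence of the trace makes the result insensitive to the $m=0$ choice, as in Remark~\ref{rem:noncanonical-zero-m-split-section05}.
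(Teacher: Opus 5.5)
Your proposal is correct and takes essentially the paper's route: transport $f$ through the unitary $\mathcal U$ by functional calculus, evaluate the trace as an absolutely convergent sum over the explicit eigenbasis, and regroup it into the Dirichlet and reduced Neumann sums \eqref{eq:dirichlet-trace-section05}--\eqref{eq:neumann-prime-trace-section05}. Your extension in Step~1 from basis vectors to the whole space is justified because absolute summability of the values $f(\lambda_j)$ makes $f$ bounded on the spectrum, so $f(\LMx)$ is bounded (indeed trace class).
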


\begin{proof}
By \eqref{eq:LMx-unitarily-equivalent-LD-LNp}, functional calculus gives
\begin{equation*}
    f(\LMx)
    =
    \mathcal U\bigl(f(\LD)\oplus f(\LNp)\bigr)\mathcal U^{-1}
\end{equation*}
whenever these spectral operators are trace class.  Taking traces gives
\eqref{eq:maxwell-trace-Dirichlet-plus-Neumann}.  Expanding the scalar traces
using \eqref{eq:dirichlet-trace-section05} and
\eqref{eq:neumann-prime-trace-section05} gives
\eqref{eq:maxwell-trace-expanded-section05}.  The absolute-convergence
hypothesis justifies taking the traces as sums over the displayed eigenbases.
\end{proof}

For the heat-regularized functions used in this paper, the hypotheses of
Theorem~\ref{thm:maxwell-trace-Dirichlet-Neumann} are satisfied by the
heat-trace admissibility proved in Proposition~\ref{prop:heat-trace-admissibility}.  In particular, for every \(\tau>0\),
\begin{equation}
    f_\tau(\lambda):=\lambda^{1/2}e^{-\tau\lambda}
    \label{eq:f-tau-sqrt-heat-section05}
\end{equation}
is trace-admissible, and
\begin{equation}
    \Trphys\paren{\LMx^{1/2}e^{-\tau\LMx}}
    =
    \Tr\paren{\LD^{1/2}e^{-\tau\LD}}
    +
    \Tr'\paren{\LN^{1/2}e^{-\tau\LN}} .
    \label{eq:regulated-maxwell-trace-LD-LN-section05}
\end{equation}
Combining this with Theorem~\ref{thm:regulated-maxwell-quadratic-trace} gives
\begin{equation}
    \boxed{
    \E[\UtauMx]
    =
    \frac{\hbarc}{2}
    \left[
        \Tr\paren{\LD^{1/2}e^{-\tau\LD}}
        +
        \Tr'\paren{\LN^{1/2}e^{-\tau\LN}}
    \right].
    }
    \label{eq:regulated-stochastic-trace-LD-LN-section05}
\end{equation}

\subsection{Explicit finite-volume sum}
\label{subsec:explicit-finite-volume-sum-section05}

Using \eqref{eq:maxwell-trace-expanded-section05} with
\(f=f_\tau\), the regulated Maxwell trace can be written as
\begin{equation}
    \E[\UtauMx]
    =
    \frac{\hbarc}{2}
    \left[
        2\sum_{m\in\Z^2}\sum_{n=1}^{\infty}
        \lambdamn^{1/2}e^{-\tau\lambdamn}
        +
        \sum_{m\in\Z^2\setminus\{0\}}
        \abs{k_m}e^{-\tau\abs{k_m}^2}
    \right].
    \label{eq:regulated-maxwell-trace-explicit-section05}
\end{equation}
The first double sum is the combined contribution of the two physical
polarizations for every nonzero vertical mode.  The final sum is the reduced
Neumann \(n=0\) branch.  It is a genuine finite-volume Maxwell branch, not an
error or an extra scalar channel.  Its dependence on \(a\) is absent, and that
fact is used only later, when the standard parallel-plate interaction finite
part is taken.

\begin{remark}[No scalar doubling shortcut]
\label{rem:no-scalar-doubling-shortcut-section05}
The factor of two in the first term of
\eqref{eq:regulated-maxwell-trace-explicit-section05} has not been inserted by
hand.  It comes from the two-dimensional divergence-free Maxwell block
\(W_{m,n}\) for each \(n\geq1\), together with the PEC boundary condition.  The
additional \(n=0\) branch and the removed static mode are both visible in the
operator spectrum.  Thus \eqref{eq:maxwell-trace-Dirichlet-plus-Neumann} is a
Maxwell trace identity, not a scalar two-channel assumption.
\end{remark}

\begin{remark}[Use in the next section]
\label{rem:remaining-after-te-tm-section05}
This decomposition rewrites the regulated stochastic Maxwell expectation as
scalar Dirichlet and reduced Neumann traces.  The next section evaluates their
standard parallel-plate interaction finite parts.
\end{remark}
\section{Finite part and the electromagnetic plate coefficient}
\label{sec:finite-part-plate-result}

The preceding sections proved the finite-volume representation
\begin{equation}
\label{eq:section06-starting-point}
    \E[\UtauMx]
    =
    \frac{\hbarc}{2}\Trphys\paren{\LMx^{1/2}e^{-\tau\LMx}}
\end{equation}
and the finite-volume spectral identity
\begin{equation}
\label{eq:section06-maxwell-DN-trace-start}
    \Trphys f(\LMx)=\Tr f(\LD)+\Tr' f(\LN).
\end{equation}
The one-channel scalar Dirichlet finite part used below is the same standard
parallel-plate scalar input that appears in \cite{KhanKhan2026Scalar}.  The
new point is how that scalar input enters the Maxwell problem: through the
exact decomposition \(\Trphys f(\LMx)=\Tr f(\LD)+\Tr' f(\LN)\), together with
the \(a\)-independent reduced Neumann branch.
This section evaluates the standard parallel-plate interaction finite part of
that trace.  The order of operations is fixed throughout:
\begin{equation}
\label{eq:section06-order-of-operations}
\begin{aligned}
    L<\infty
    &\quad\longrightarrow\quad
    \text{regulated identity}
    \quad\longrightarrow\quad
    \text{energy density at fixed }\tau \\[2pt]
    &\quad\longrightarrow\quad
    L\to\infty
    \quad\longrightarrow\quad
    \FP_{\mathrm{int},\,\tau\to0^+}.
\end{aligned}
\end{equation}
We do not take a large-area limit of the bounded inverse \(\LMx^{-1}\).  The
large-area limit is taken only after the heat-regularized trace density has
been formed.

\subsection{The regulated finite-box Maxwell trace}
\label{subsec:regulated-finite-box-maxwell-trace-section06}

Recall that
\begin{equation}
\label{eq:lambda-mn-section06}
    \lambdamn=\abs{k_m}^2+\nu_n^2,
    \qquad
    k_m=\frac{2\pi}{L}m,
    \qquad
    \nu_n=\frac{\pi n}{a} .
\end{equation}
Define the one-channel scalar Dirichlet regulated energy in the finite box by
\begin{equation}
\label{eq:EDtau-def-section06}
    \EDtau(L,a)
    :=
    \frac{\hbarc}{2}
    \sum_{m\in\Z^2}\sum_{n=1}^{\infty}
    \lambdamn^{1/2}e^{-\tau\lambdamn} .
\end{equation}
Define also the reduced Neumann zero-vertical branch
\begin{equation}
\label{eq:Btau-def-section06}
    \Btau(L)
    :=
    \frac{\hbarc}{2}
    \sum_{m\in\Z^2\setminus\{0\}}
    \abs{k_m}e^{-\tau\abs{k_m}^2} .
\end{equation}
The explicit trace formula of Section~\ref{sec:te-tm-decomposition}, namely
\eqref{eq:regulated-maxwell-trace-explicit-section05}, is then
\begin{equation}
\label{eq:Maxwell-regulated-energy-2D-plus-B-section06}
    \E[\UtauMx(L,a)]
    =
    2\EDtau(L,a)+\Btau(L).
\end{equation}
Equivalently, if
\begin{equation}
\label{eq:ENtau-def-section06}
    \ENtau(L,a)
    :=
    \frac{\hbarc}{2}
    \Tr'\paren{\LN^{1/2}e^{-\tau\LN}},
\end{equation}
then the reduced Neumann identity is
\begin{equation}
\label{eq:ENtau-equals-EDtau-plus-Btau-section06}
    \ENtau(L,a)=\EDtau(L,a)+\Btau(L),
\end{equation}
and
\begin{equation}
\label{eq:Maxwell-regulated-energy-D-plus-N-section06}
    \E[\UtauMx(L,a)]
    =
    \EDtau(L,a)+\ENtau(L,a).
\end{equation}
The term \(\Btau(L)\) is not discarded at the regulated finite-volume level.
It is a genuine Maxwell branch.  Its special role is that it has no dependence
on the plate separation \(a\).

\subsection{The interaction finite part}
\label{subsec:interaction-finite-part-section06}

The quantity computed below is the standard parallel-plate interaction energy
per unit area.  This fixes the finite-part convention used in this section.

\begin{definition}[Parallel-plate interaction finite part]
\label{def:parallel-plate-interaction-finite-part}
For a regulated large-area energy density \(F_\tau(a)\), the notation
\[
    \FP_{\mathrm{int},\,\tau\to0^+} F_\tau(a)
\]
means the following operation: subtract the bulk vacuum terms, the separate
one-plate self-energy terms, and all terms independent of the separation
\(a\), and retain the finite separation-dependent part.  After these standard
bulk and one-plate subtractions have been made, the remaining finite part
determines the finite plate-separation force
\[
    -\frac{\partial}{\partial a}
    \FP_{\mathrm{int},\,\tau\to0^+} F_\tau(a).
\]
Thus an \(a\)-independent regulated density has zero interaction finite part.
\end{definition}

This convention is weaker than, and different from, assigning an absolute
renormalized vacuum energy.  It fixes the physically relevant plate
interaction and leaves no freedom to keep an arbitrary constant independent of
\(a\).

The branch \(\Btau(L)\) is exactly of this removable type.  For fixed
\(\tau>0\), the Riemann-sum limit gives
\begin{align}
\label{eq:Btau-large-area-limit-section06}
    \lim_{L\to\infty}\frac{\Btau(L)}{L^2}
    &=
    \frac{\hbarc}{2}
    \int_{\R^2}\frac{\dd^2 k}{(2\pi)^2}
    \abs{k}e^{-\tau\abs{k}^2}  \\
    &=
    \frac{\hbarc}{2}\cdot\frac{1}{2\pi}
    \int_0^\infty k^2e^{-\tau k^2}\,\dd k \\
    &=
    \frac{\hbarc}{16\sqrt\pi}\,\tau^{-3/2} .
\end{align}
The right-hand side has no dependence on \(a\).  Therefore
\begin{equation}
\label{eq:Btau-FPint-zero-section06}
    \FP_{\mathrm{int},\,\tau\to0^+}
    \left[\lim_{L\to\infty}\frac{\Btau(L)}{L^2}\right]
    =0 .
\end{equation}
Consequently,
\begin{equation}
\label{eq:Maxwell-FP-reduces-to-Dirichlet-section06}
    \FP_{\mathrm{int},\,\tau\to0^+}
    \left[\lim_{L\to\infty}\frac{1}{L^2}\E[\UtauMx(L,a)]\right]
    =
    2\,
    \FP_{\mathrm{int},\,\tau\to0^+}
    \left[\lim_{L\to\infty}\frac{\EDtau(L,a)}{L^2}\right].
\end{equation}
Thus the remaining calculation is the one-channel scalar Dirichlet plate
finite part.

\subsection{The scalar Dirichlet finite part}
\label{subsec:scalar-dirichlet-finite-part-section06}

For completeness, we recall the standard zeta extraction of the one-channel
Dirichlet interaction finite part.  At fixed \(\tau>0\), the large-area limit
of \(L^{-2}\EDtau(L,a)\) is
\begin{equation}
\label{eq:large-area-Dirichlet-heat-regulated-density}
    \mathcal E_{D,\tau}(a)
    =
    \frac{\hbarc}{2}
    \sum_{n=1}^{\infty}
    \int_{\R^2}\frac{\dd^2k}{(2\pi)^2}
    \paren{\abs{k}^2+\paren{\frac{\pi n}{a}}^2}^{1/2}
    e^{-\tau\left(\abs{k}^2+(\pi n/a)^2\right)} .
\end{equation}
The corresponding spectral zeta density is initially defined for
\(\mathrm{Re}\,s>3/2\) by
\begin{equation}
\label{eq:ZD-s-def-section06}
    Z_D(s,a)
    :=
    \sum_{n=1}^{\infty}
    \int_{\R^2}\frac{\dd^2k}{(2\pi)^2}
    \paren{\abs{k}^2+\paren{\frac{\pi n}{a}}^2}^{-s} .
\end{equation}
Equivalently, the energy zeta regularization may be written, for
\(\mathrm{Re}\,s>2\), as
\begin{equation}
\label{eq:ED-zeta-regularized-section06}
    \mathcal E_D(s,a)
    :=
    \frac{\hbarc}{2}\,\mu^{2s}
    \sum_{n=1}^{\infty}
    \int_{\R^2}\frac{\dd^2k}{(2\pi)^2}
    \paren{\abs{k}^2+\paren{\frac{\pi n}{a}}^2}^{1/2-s} .
\end{equation}
Here \(\mu\) is an arbitrary reference scale inserted only to keep dimensions
fixed away from \(s=0\).  The standard dimensional integral gives
\begin{equation}
\label{eq:dimensional-integral-section06}
    \int_{\R^2}\frac{\dd^2k}{(2\pi)^2}
    \paren{\abs{k}^2+M^2}^{1/2-s}
    =
    \frac{1}{4\pi}
    \frac{\Gamma(s-3/2)}{\Gamma(s-1/2)}
    M^{3-2s},
\end{equation}
by analytic continuation from the convergent half-plane.  Therefore
\begin{equation}
\label{eq:ED-zeta-evaluated-section06}
    \mathcal E_D(s,a)
    =
    \frac{\hbarc}{8\pi}
    \mu^{2s}
    \frac{\Gamma(s-3/2)}{\Gamma(s-1/2)}
    \paren{\frac{\pi}{a}}^{3-2s}
    \zeta(2s-3).
\end{equation}
This meromorphic expression is regular at \(s=0\).  Since
\begin{equation}
\label{eq:gamma-zeta-values-section06}
    \frac{\Gamma(-3/2)}{\Gamma(-1/2)}=-\frac23,
    \qquad
    \zeta(-3)=\frac{1}{120},
\end{equation}
one obtains
\begin{equation}
\label{eq:Dirichlet-finite-part-result-section06}
    \mathcal E_D(0,a)
    =
    -\frac{\pi^2\hbarc}{1440a^3}.
\end{equation}
This is the standard one-channel scalar Dirichlet parallel-plate interaction
finite part:
\begin{equation}
\label{eq:Dirichlet-FPint-density-section06}
    \FP_{\mathrm{int},\,\tau\to0^+}\mathcal E_{D,\tau}(a)
    =
    -\frac{\pi^2\hbarc}{1440a^3}.
\end{equation}

\begin{remark}[Heat regulator and zeta extraction]
\label{rem:heat-zeta-compatibility-section06}
The stochastic representation uses the heat cutoff
\(\lambda^{1/2}e^{-\tau\lambda}\), whereas
\eqref{eq:ED-zeta-regularized-section06} uses analytic continuation.  These
extract the same interaction finite part.  Indeed, Mellin inversion gives,
for \(\gamma\) in the convergent half-plane,
\begin{equation}
\label{eq:Mellin-heat-zeta-section06}
    \mathcal E_{D,\tau}(a)
    =
    \frac{\hbarc}{2}\frac{1}{2\pi\ii}
    \int_{\gamma-\ii\infty}^{\gamma+\ii\infty}
    \Gamma(z)\tau^{-z}Z_D(z-1/2,a)\,\dd z .
\end{equation}
Shifting the contour gives the short-time expansion.  The poles away from
\(z=0\) produce the local divergent terms.  The residue at \(z=0\) from
\(\Gamma(z)\) is exactly \((\hbarc/2)Z_D(-1/2,a)
=\mathcal E_D(0,a)\), which is the finite separation-dependent term displayed
in \eqref{eq:Dirichlet-finite-part-result-section06}.  This is the sense in
which \eqref{eq:Dirichlet-FPint-density-section06} is a heat finite part and
not a change of regulator.
\end{remark}

\subsection{The reduced Neumann finite part}
\label{subsec:reduced-neumann-finite-part-section06}

The reduced Neumann channel differs from the Dirichlet channel only by the
zero-vertical branch \(\Btau(L)\).  Combining
\eqref{eq:ENtau-equals-EDtau-plus-Btau-section06} with
\eqref{eq:Btau-FPint-zero-section06} gives
\begin{align}
\label{eq:Neumann-FPint-equals-Dirichlet-section06}
    \FP_{\mathrm{int},\,\tau\to0^+}
    \left[\lim_{L\to\infty}\frac{\ENtau(L,a)}{L^2}\right]
    &=
    \FP_{\mathrm{int},\,\tau\to0^+}
    \left[\lim_{L\to\infty}\frac{\EDtau(L,a)}{L^2}\right] \\
    &=
    -\frac{\pi^2\hbarc}{1440a^3} .
\end{align}
Thus the reduced scalar Neumann sector has the same interaction finite part as
the scalar Dirichlet sector.  The equality is not a statement that the raw
regulated traces are identical; their difference is the real branch
\(\Btau(L)\), which is independent of \(a\).

\subsection{The Maxwell plate result}
\label{subsec:maxwell-plate-result-section06}

We now combine the preceding identities.  From
\eqref{eq:Maxwell-regulated-energy-D-plus-N-section06}, the Maxwell interaction
finite part per unit area is the sum of the Dirichlet and reduced Neumann
interaction finite parts.  Therefore,
\begin{align}
\label{eq:Maxwell-FPint-final-calculation-section06}
    \FP_{\mathrm{int},\,\tau\to0^+}
    \left[\lim_{L\to\infty}\frac{1}{L^2}\E[\UtauMx(L,a)]\right]
    &=
    -\frac{\pi^2\hbarc}{1440a^3}
    -\frac{\pi^2\hbarc}{1440a^3} \\
    &=
    -\frac{\pi^2\hbarc}{720a^3} .
\end{align}

\begin{theorem}[Maxwell parallel-plate finite part]
\label{thm:Maxwell-parallel-plate-finite-part}
Let \(\LMx\) be the reduced Maxwell operator on
\[
    \OmegaLa=\TtwoL\times[0,a],
\]
with perfect-conductor boundary conditions: it is defined on divergence-free
electric fields with \(n\times E=0\) on the plates, and the static normal zero
mode is removed.  Let \(\Sigmatau\) be the
heat-regularized prescribed Maxwell Gaussian source of
Section~\ref{sec:riesz-gaussian-source}.  Then, under the standard
parallel-plate interaction finite-part prescription of
Definition~\ref{def:parallel-plate-interaction-finite-part},
all objects in the expectation below are the finite-volume objects associated
with \(\Omega_{L,a}\); in particular \(\LMx=\LMx(L,a)\) and
\(\Sigmatau=\Sigmatau(L,a)\):
\begin{equation}
\label{eq:Maxwell-main-finite-part-theorem-section06}
    \boxed{
    \FP_{\mathrm{int},\,\tau\to0^+}
    \left[\lim_{L\to\infty}\frac{1}{L^2}
    \E\left[
        \frac12\inner{\Sigmatau}{g\LMx^{-1}\Sigmatau}
    \right]
    \right]
    =
    -\frac{\pi^2\hbarc}{720a^3}.
    }
\end{equation}
\end{theorem}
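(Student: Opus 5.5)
The proof assembles results already established, in the order fixed by \eqref{eq:section06-order-of-operations}.

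\emph{Step 1: the finite-volume identity.} Fix finite \(L\) and \(\tau>0\). Theorem~\ref{thm:regulated-maxwell-quadratic-trace} identifies the expectation inside the bracket of \eqref{eq:Maxwell-main-finite-part-theorem-section06} with \((\hbarc/2)\Trphys(\LMx^{1/2}e^{-\tau\LMx})\). This uses that \(g\LMx^{-1}\) is bounded by Proposition~\ref{prop:finite-volume-gap}. It also uses that \(\Sigmatau\) is an honest \(\Hphys\)-valued Gaussian by Proposition~\ref{prop:maxwell-source-hilbert-valued}.

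\emph{Step 2: the explicit trace.} Apply Theorem~\ref{thm:maxwell-trace-Dirichlet-Neumann} with \(f=f_\tau\). Its absolute-convergence hypothesis holds by Proposition~\ref{prop:heat-trace-admissibility}. This gives the decomposition \eqref{eq:Maxwell-regulated-energy-2D-plus-B-section06}, namely \(\E[\UtauMx(L,a)]=2\EDtau(L,a)+\Btau(L)\).

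\emph{Step 3: the large-area limit.} Still at fixed \(\tau\), divide by \(L^2\) and let \(L\to\infty\). For each fixed \(n\), the lateral sum \(L^{-2}\sum_m h(k_m)\), with \(h(k)=(\abs{k}^2+\nu_n^2)^{1/2}e^{-\tau(\abs{k}^2+\nu_n^2)}\), is a Riemann sum on the lattice \((2\pi/L)\Z^2\). It converges to \(\int h\,\dd^2k/(2\pi)^2\). To interchange this limit with the sum over \(n\), I would use a uniform Gaussian majorant. For \(L\ge1\) the lattice sum is bounded by \(C(1+\nu_n)^{C}e^{-\tau\nu_n^2/2}\), obtained by comparing the lattice sum with the integral of a slightly enlarged Gaussian. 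This bound is summable in \(n\), so dominated convergence gives \(\mathcal E_{D,\tau}(a)\) as in \eqref{eq:large-area-Dirichlet-heat-regulated-density}. The same argument gives the limit \eqref{eq:Btau-large-area-limit-section06} for \(\Btau\). That limit is \(\hbarc\,\tau^{-3/2}/(16\sqrt\pi)\), which does not depend on \(a\).

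\emph{Step 4: the finite part.} By Definition~\ref{def:parallel-plate-interaction-finite-part}, the \(a\)-independent \(\Btau\) density has zero interaction finite part, which is \eqref{eq:Btau-FPint-zero-section06}. The prescription is linear, so the result reduces to twice the Dirichlet finite part, as in \eqref{eq:Maxwell-FP-reduces-to-Dirichlet-section06}. The Dirichlet finite part is \(-\pi^2\hbarc/(1440a^3)\) by \eqref{eq:Dirichlet-FPint-density-section06}. Doubling gives \(-\pi^2\hbarc/(720a^3)\). Equivalently, one can add the Dirichlet and reduced Neumann finite parts via \eqref{eq:Neumann-FPint-equals-Dirichlet-section06}.

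\emph{Main obstacle: justifying the heat-kernel finite part.} The substantive step is checking that the finite part of the heat-regulated density \(\mathcal E_{D,\tau}(a)\) really is the zeta value \(\mathcal E_D(0,a)\), as argued in Remark~\ref{rem:heat-zeta-compatibility-section06}. I would make the Mellin representation \eqref{eq:Mellin-heat-zeta-section06} rigorous by writing
\[
Z_D(s,a)=\frac{1}{4\pi(s-1)}\paren{\frac{\pi}{a}}^{2-2s}\zeta(2s-2).
\]
The contour is then shifted past the poles of \(\Gamma(z)Z_D(z-1/2,a)\), using Stirling decay of the Gamma factors to control the horizontal segments. The poles other than \(z=0\) should then be identified explicitly:
\begin{itemize}
\item the pole at \(z=2\) comes from \(\zeta(2z-3)\) and gives a term proportional to \(a\tau^{-2}\), the bulk vacuum term;
\item the pole at \(z=3/2\) comes from \(\Gamma(z-3/2)\) via the \(s-1\) factor and gives an \(a\)-independent term of order \(\tau^{-3/2}\), the one-plate self-energy.
\end{itemize}
Any other residues are \(O(\tau^{\epsilon})\) and vanish as \(\tau\to0^+\). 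Once this is checked, the subtraction prescription removes exactly these terms, and the \(z=0\) residue yields \(-\pi^2\hbarc/(1440a^3)\).
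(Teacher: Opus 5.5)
Your proposal is correct and follows essentially the same route as the paper. It uses the finite-volume regulated identity of Theorem~\ref{thm:regulated-maxwell-quadratic-trace}, the decomposition \(2\EDtau+\Btau\) (equivalently Dirichlet plus reduced Neumann) from Theorem~\ref{thm:maxwell-trace-Dirichlet-Neumann}, and removal of the \(a\)-independent branch; the Dirichlet finite part is then extracted by a Mellin/contour shift exactly as in Appendix~\ref{app:heat-zeta-finite-part}, and your dominated-convergence argument for the large-area limit fills in a step the paper only asserts as a Riemann-sum limit.
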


\begin{proof}
For finite \(L\), Theorem~\ref{thm:regulated-maxwell-quadratic-trace} gives
\begin{equation*}
    \E\left[
        \frac12\inner{\Sigmatau}{g\LMx^{-1}\Sigmatau}
    \right]
    =
    \frac{\hbarc}{2}\Trphys\paren{\LMx^{1/2}e^{-\tau\LMx}} .
\end{equation*}
Theorem~\ref{thm:maxwell-trace-Dirichlet-Neumann} identifies this trace with
Dirichlet plus reduced Neumann scalar traces.  Equations
\eqref{eq:Dirichlet-FPint-density-section06} and
\eqref{eq:Neumann-FPint-equals-Dirichlet-section06} evaluate their interaction
finite parts.  Adding them gives \eqref{eq:Maxwell-main-finite-part-theorem-section06}.
\end{proof}

\begin{remark}[Dependence of the calculation]
\label{rem:what-used-section06}
The factor of two in \eqref{eq:Maxwell-FPint-final-calculation-section06}
comes from the Maxwell spectral identity
\(\LMx\simeq \LD\oplus\LNp\), not from assuming two independent scalar copies.
The reduced Neumann zero-vertical branch was retained in the regulated trace
and removed only at the interaction finite-part stage because it is
\(a\)-independent.  The broader interpretation and limitations of the theorem
are collected in Section~\ref{sec:discussion-limitations}.
\end{remark}
\section{Discussion and limitations}
\label{sec:discussion-limitations}

The preceding sections prove a Maxwell version of the scalar
codimension-three Riesz/Gaussian representation in the parallel-plate geometry.
The result is narrow but complete within that scope.  Starting from the
physical perfect-conductor Maxwell operator on divergence-free electric fields,
we obtain a finite-volume Riesz-reduced Green operator, a heat-regularized
Gaussian source with prescribed covariance, a regulated quadratic-form trace
identity, an exact TE/TM trace decomposition, and finally the standard
parallel-plate interaction finite part
\[
    -\frac{\pi^2\hbarc}{720a^3} .
\]

The final coefficient is the standard electromagnetic parallel-plate value.
The contribution of the theorem is the operator-theoretic representation that
produces this value from the reduced Maxwell Hilbert space for perfectly
conducting plates.
The factor of two in the final coefficient is not inserted by hand.  It comes
from the spectral equivalence
\[
    \LMx\simeq\LD\oplus\LNp,
\]
proved in Theorem~\ref{thm:maxwell-trace-Dirichlet-Neumann}.  The reduced
Neumann zero-vertical branch is present in the finite-volume Maxwell trace and
is discarded only at the interaction finite-part stage because it is
independent of the plate separation.
Read together with \cite{KhanKhan2026Scalar}, the present paper separates the
program into two levels.  The scalar preprint isolates the codimension-three
Riesz/Gaussian trace mechanism and records the scalar Dirichlet plate
benchmark.  The present paper shows that the same mechanism extends to the
genuine Maxwell parallel-plate operator once the physical Hilbert space,
perfect-conductor boundary condition, static zero mode, and TE/TM decomposition
are treated directly.

\subsection{What the theorem establishes}
\label{subsec:what-the-theorem-establishes}

The first established point is the operator construction.  The Maxwell operator
is not introduced by a formal differential expression alone.  It is defined by
the closed form
\[
    \qMx[E,F]
    =
    \int_{\OmegaLa}
    (\nabla\times E)\cdot\overline{(\nabla\times F)}\,\dd x
\]
on the reduced physical form domain \(\Vphys\).  The boundary condition
\(n\times E=0\) and the constraint \(\nabla\cdot E=0\) are built into this
form domain.  The static normal field is removed before the inverse is used.
Thus the operator entering the Riesz identity is a positive self-adjoint
finite-volume Maxwell operator with a genuine spectral gap.

The second established point is spectral admissibility.  The mixed sine/cosine
Fourier characterization of the form domain shows that the spectrum is
discrete in finite volume and that
\[
    \Tr\paren{\LMx^p e^{-\tau\LMx}}<\infty
    \qquad(\tau>0,
    \ p\geq0).
\]
This justifies the heat-regularized Gaussian source as an honest Hilbert-valued
random variable and makes the covariance trace-class.

The third established point is the codimension-three Riesz reduction on the
Maxwell Hilbert space.  Since the proof uses only the spectral theorem for a
positive self-adjoint operator with a lower bound, it applies to \(\LMx\) just
as it applies to a scalar operator:
\[
    \int_{\R^3}\frac{\dd^3q}{(2\pi)^3}
    (\LMx+\abs q^2)^{-5/2}
    =
    \frac{1}{6\pi^2}\LMx^{-1} .
\]
With \(g=\kappa/(6\pi^2)\), this gives the finite-volume mediator
\(\VMx=g\LMx^{-1}\).

The fourth established point is the stochastic trace identity.  For
\[
    \Sigmatau
    =
    \paren{\frac{\hbarc}{g}}^{1/2}
    \LMx^{3/4}e^{-\tau\LMx/2}\XiNoise,
\]
the covariance is
\[
    \Ctau
    =
    \frac{\hbarc}{g}\LMx^{3/2}e^{-\tau\LMx} .
\]
Pairing this covariance with the Green operator \(g\LMx^{-1}\) gives
\[
    g\LMx^{-1}\Ctau
    =
    \hbarc\,\LMx^{1/2}e^{-\tau\LMx},
\]
and therefore
\[
    \E[\UtauMx]
    =
    \frac{\hbarc}{2}
    \Trphys\paren{\LMx^{1/2}e^{-\tau\LMx}} .
\]

The fifth established point is the plate finite part.  The Maxwell trace is
Dirichlet plus reduced Neumann.  The reduced Neumann trace differs from the
Dirichlet trace by the zero-vertical branch
\[
    \Btau(L)
    =
    \frac{\hbarc}{2}
    \sum_{m\ne0}\abs{k_m}e^{-\tau\abs{k_m}^2},
\]
which carries no \(a\)-dependence.  Under the standard parallel-plate
interaction finite-part prescription, this branch contributes zero.  The two
remaining scalar interaction finite parts are equal, each giving
\(-\pi^2\hbarc/(1440a^3)\), and their sum gives the electromagnetic coefficient.

\subsection{Why the electric-field formulation was used}
\label{subsec:why-electric-field-formulation}

The main proof avoids vector-potential gauge issues by working directly with
physical electric fields.  Vector-potential and covariant gauge-fixed
formulations may also be used, but the present choice of variables makes the
Hilbert-space statement precise.  Nonzero Maxwell modes may be related to
temporal/Coulomb-gauge vector-potential modes, but that relationship is not
needed for the proof of the spectral trace identity.
Appendix~\ref{app:optional-gauge-comment} records the limited gauge-theoretic
interpretation and explicitly separates it from a full covariant determinant
calculation.

The important consequence is that no Faddeev--Popov cancellation is hidden in
the main argument.  The trace is taken directly over the physical electric-field
Hilbert space after the static zero mode has been removed.  If one chooses a
covariant gauge-fixed formulation instead, one must independently show that the
vector and ghost determinants reduce to the same physical trace.  The present
paper does not use that determinant formulation; it works directly with the
physical electric-field trace.

\subsection{What the theorem does not establish}
\label{subsec:what-the-theorem-does-not-establish}

The covariance is prescribed.  The construction does not show that
\[
    \Ctau=\frac{\hbarc}{g}\LMx^{3/2}e^{-\tau\LMx}
\]
is forced by microscopic electrodynamics.  It shows that if this
heat-regularized covariance is assigned on the reduced Maxwell Hilbert space,
then the expected quadratic Green energy is exactly the heat-regularized
Maxwell trace.  Deriving the covariance from a photon path integral, a
worldline representation, or an effective action with matter fields would be a
different project.

The ambient product operator is also not a local higher-dimensional Maxwell
operator.  It is the product spectral operator
\[
    \Lamb=\LMx\otimes I+I\otimes(-\Delta_y)
\]
used to define a fractional Riesz-type mediator.  The transverse restriction is
implemented through a transverse momentum integral, or equivalently through a
mollified transverse distributional limit.  No point-evaluation map on
\(L^2(\R^3_y)\) is used, and no ordinary six-dimensional Maxwell propagator is
claimed.

The result is also not a statement about arbitrary conductor geometries.  The
proof uses the product slab \(\TtwoL\times[0,a]\), lateral Fourier modes, and
the explicit sine/cosine decomposition induced by flat plates.  Non-planar
geometries require new spectral analysis.  In particular, conducting spherical
shells, curved boundaries, and edge geometries are outside the theorem.

Finally, this paper does not use or justify any reference-cell calibration,
cube extremality argument, loop correction, running coupling, or
fundamental-constant identification.  Those topics would require additional
normalizations or additional physics not present in the theorem proved here.
Those calibration questions are treated separately in
\cite{KhanKhan2026Scalar} and are not part of the Maxwell theorem proved here.

\subsection{Order of limits and finite parts}
\label{subsec:order-of-limits-discussion}

The order of operations is essential:
\[
\begin{aligned}
    L<\infty
    &\quad\longrightarrow\quad
    \text{regulated operator identity}
    \quad\longrightarrow\quad
    \text{trace density at fixed }\tau \\
    &\quad\longrightarrow\quad
    L\to\infty
    \quad\longrightarrow\quad
    \FP_{\mathrm{int},\,\tau\to0^+}.
\end{aligned}
\]
For finite \(L\), the reduced operator has a bounded inverse.  In the
large-area limit, the spectral gap collapses because the zero-vertical branch
has eigenvalues \(\abs{k_m}^2\) with \(k_m\to0\).  Thus it would be incorrect
to treat \(\LMx^{-1}\) as a bounded infinite-area operator.  The theorem avoids
this by proving the stochastic identity in finite volume and taking the
large-area limit only at the level of the heat-regularized trace density.  The
interaction finite part is then applied to that large-area density.

The finite part used in the final result is the standard interaction finite
part.  It removes bulk terms, one-plate self-energies, and terms independent of
the separation \(a\), retaining the finite separation-dependent interaction
energy.  This convention is not optional bookkeeping: it is what permits the
real but \(a\)-independent branch \(\Btau(L)\) to be discarded in the
interaction energy without discarding it from the finite-volume Maxwell trace.

\subsection{Beyond parallel plates}
\label{subsec:possible-next-tests}

Non-planar conductor geometries require separate spectral analysis.  In such
geometries the physical Maxwell Hilbert space, boundary conditions, zero modes,
and TE/TM or vector-spherical spectral structure must be treated anew.  The
present result should therefore be read as a parallel-plate theorem.

Thus the correct conclusion is deliberately modest.  The parallel-plate Maxwell
operator passes the Riesz/Gaussian representation test, and the standard
interaction finite part is the electromagnetic plate coefficient.  No broader
geometric universality or microscopic QED derivation follows from this result
alone.

\subsection{Conclusion}
\label{subsec:conclusion}

The theorem proved in this paper can be summarized as follows.  On the reduced
physical Maxwell Hilbert space for perfectly conducting parallel plates, the
codimension-three transverse reduction produces the Green operator
\(g\LMx^{-1}\).  With the prescribed heat-regularized covariance
\((\hbarc/g)\LMx^{3/2}e^{-\tau\LMx}\), the expected quadratic Green energy is
exactly
\[
    \frac{\hbarc}{2}
    \Trphys\paren{\LMx^{1/2}e^{-\tau\LMx}} .
\]
The physical Maxwell trace is \(\LD\oplus\LNp\), and the standard
parallel-plate interaction finite part is
\[
    -\frac{\pi^2\hbarc}{720a^3} .
\]
The relation to the companion scalar preprint \cite{KhanKhan2026Scalar} is
therefore straightforward: that paper established the scalar codimension-three
Riesz/Gaussian representation scheme and its one-channel Dirichlet
parallel-plate specialization, while the present paper extends the same scheme
to the reduced physical Maxwell operator and derives the electromagnetic plate
coefficient from the actual Maxwell spectrum.
This completes the Maxwell parallel-plate version of the codimension-three
Riesz/Gaussian trace representation.

\appendix
\section{Details of the Fourier coefficient-domain lemma}
\label{app:fourier-domain-lemma}

This appendix gives a detailed proof of the coefficient-domain statement used in
Section~\ref{sec:fourier-domain-heat-trace}.  The point is to connect the
closed-form definition of the Maxwell operator in Section~\ref{sec:maxwell-operator}
with the separated Fourier calculation.  No general Maxwell compactness theorem
is used here; the proof is specific to the finite periodic slab
\[
    \OmegaLa=\TtwoL\times[0,a].
\]

The convention from Section~\ref{sec:maxwell-operator} is important:
\(\Hzcurl(\OmegaLa;\C^3)\) denotes the closure, in the
\(H(\operatorname{curl})\) graph norm, of smooth lateral-periodic fields whose
tangential trace vanishes on the two plates.  Thus the perfect-conductor
condition is imposed at the form-domain level.

\subsection{Mixed Fourier expansions}
\label{app:subsec:mixed-expansions}

Let
\[
    k_m:=\frac{2\pi}{L}m,
    \qquad m\in\Z^2,
    \qquad
    \nu_n:=\frac{\pi n}{a},
    \qquad n\in\N_0,
\]
and define
\[
    \phi_m(r):=L^{-1}\ee^{\ii k_m\cdot r},
    \qquad r=(x,y)\in\TtwoL .
\]
For the plate direction set
\[
    s_n(z):=\sqrt{\frac2a}\sin(\nu_n z),\qquad n\ge1,
\]
and
\[
    c_0(z):=a^{-1/2},
    \qquad
    c_n(z):=\sqrt{\frac2a}\cos(\nu_n z),\qquad n\ge1.
\]
The lateral modes \(\{\phi_m\}_{m\in\Z^2}\) are an orthonormal basis of
\(L^2(\TtwoL)\), the sine modes \(\{s_n\}_{n\ge1}\) are an orthonormal basis of
\(L^2(0,a)\), and the cosine modes \(\{c_n\}_{n\ge0}\) are another orthonormal
basis of \(L^2(0,a)\).

Consequently every \(L^2\) vector field has the mixed expansion
\begin{align}
    E_{\parallel}(r,z)
    &=
    \sum_{m\in\Z^2}\sum_{n\ge1}
    a_{m,n}\,\phi_m(r)s_n(z),
    \qquad a_{m,n}\in\C^2,                                  \label{eq:app-mixed-tangential}\\
    E_z(r,z)
    &=
    \sum_{m\in\Z^2}\sum_{n\ge0}
    b_{m,n}\,\phi_m(r)c_n(z),
    \qquad b_{m,n}\in\C .                                     \label{eq:app-mixed-normal}
\end{align}
Here \(E_{\parallel}=(E_x,E_y)\).  Parseval gives
\begin{equation}
\label{eq:app-parseval-L2}
    \norm{E}_{L^2(\OmegaLa)}^2
    =
    \sum_{m\in\Z^2}\sum_{n\ge1}\abs{a_{m,n}}^2
    +
    \sum_{m\in\Z^2}\sum_{n\ge0}\abs{b_{m,n}}^2 .
\end{equation}
The use of sine modes for the tangential components is compatible with the
condition \(E_x=E_y=0\) on \(z=0,a\) for finite trigonometric sums.  For general
\(L^2\) fields the boundary condition is not read pointwise from the series; it
is read through the graph-norm closure defining \(\Hzcurl\).

For \(n\ge1\), put
\begin{equation}
\label{eq:app-lambda-mn}
    \lambda_{m,n}:=\abs{k_m}^2+\nu_n^2 .
\end{equation}

\subsection{The coefficient spaces}
\label{app:subsec:coefficient-spaces}

Define \(\mathfrak C_{\mathrm{PEC}}\) to be the space of coefficient families
\((a,b)\) satisfying
\begin{equation}
\label{eq:app-CPEC-summability}
    \sum_{m\in\Z^2}\sum_{n\ge1}
    (1+\lambda_{m,n})
    \paren{\abs{a_{m,n}}^2+\abs{b_{m,n}}^2}
    +
    \sum_{m\in\Z^2}(1+\abs{k_m}^2)\abs{b_{m,0}}^2
    <\infty
\end{equation}
and, for all \(m\in\Z^2\) and \(n\ge1\),
\begin{equation}
\label{eq:app-div-constraint}
    \ii k_m\cdot a_{m,n}-\nu_n b_{m,n}=0 .
\end{equation}
The reduced physical coefficient space is
\begin{equation}
\label{eq:app-Cphys}
    \mathfrak C_{\mathrm{phys}}
    :=
    \set{(a,b)\in\mathfrak C_{\mathrm{PEC}}: b_{0,0}=0} .
\end{equation}

\begin{lemma}[Detailed Fourier characterization]
\label{lem:app-fourier-characterization}
The reconstruction map \eqref{eq:app-mixed-tangential}--\eqref{eq:app-mixed-normal}
identifies \(\mathfrak C_{\mathrm{PEC}}\) with
\[
    \VPEC=
    \set{E\in\Hzcurl(\OmegaLa;\C^3):\nabla\cdot E=0
    \text{ in }\mathcal D'(\OmegaLa)} .
\]
For \(E\in\VPEC\), the curl norm is
\begin{equation}
\label{eq:app-curl-norm-diagonal}
    \norm{\nabla\times E}_{L^2}^2
    =
    \sum_{m\in\Z^2}\sum_{n\ge1}
    \lambda_{m,n}
    \paren{\abs{a_{m,n}}^2+\abs{b_{m,n}}^2}
    +
    \sum_{m\in\Z^2}\abs{k_m}^2\abs{b_{m,0}}^2 .
\end{equation}
Moreover, imposing \(b_{0,0}=0\) identifies \(\mathfrak C_{\mathrm{phys}}\) with
\[
    \Vphys=\VPEC\cap\Hphys .
\]
\end{lemma}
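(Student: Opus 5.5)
The proof goes through two inclusions, as in the sketch after Lemma~\ref{lem:fourier-characterization-VPEC}. The extra work is to pin down the orthonormal basis in which each curl component is expanded, and to justify every integration by parts through the closure defining \(\Hzcurl\).

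\textbf{Bases for the curl components.} Since \(E_\parallel\) is expanded in \(s_n\) and \(E_z\) in \(c_n\), the components \((\nabla\times E)_x=\partial_yE_z-\partial_zE_y\) and \((\nabla\times E)_y\) are naturally expanded in \(\{\phi_m c_n\}_{n\ge0}\). The component \((\nabla\times E)_z=\partial_xE_y-\partial_yE_x\) is naturally expanded in \(\{\phi_m s_n\}_{n\ge1}\). With these bases, the coefficients \(C^x_{m,n},C^y_{m,n},C^z_{m,n}\) of \eqref{eq:curl-coeff-x}--\eqref{eq:curl-coeff-zero-branch} are the \(L^2\) coefficients in the corresponding orthonormal bases. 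This makes Parseval available on both sides.

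\textbf{Inclusion \(\mathfrak C_{\mathrm{PEC}}\subset\VPEC\).} For a finite truncation the field is a trigonometric polynomial with tangential part in \(s_n\), so it lies in the smooth PEC class. Direct differentiation gives the divergence expansion \eqref{eq:div-coeff-finite} and the curl coefficients above. Expanding \(\abs{\nu_n a^y-\ii k_y b}^2+\dots\) verifies the algebraic identity \eqref{eq:curl-div-algebraic-identity}; it is the coefficientwise form of \(\abs{\nabla\times E}^2+\abs{\nabla\cdot E}^2=\abs{\nabla E}^2\) in a mixed basis. Because the divergence coefficients vanish by \eqref{eq:app-div-constraint}, Parseval yields \eqref{eq:app-curl-norm-diagonal} for truncations. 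Condition \eqref{eq:app-CPEC-summability} then makes rectangular truncations Cauchy in the graph norm. Their limit lies in \(\Hzcurl\) by definition of the closure, and it is divergence-free because the divergence constraint is \(L^2\)-closed (Section~\ref{subsec:pec-domain}).

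\textbf{Inclusion \(\VPEC\subset\mathfrak C_{\mathrm{PEC}}\).} This is the step I expect to be the main obstacle. Testing \((\nabla\times E)_x\) against \(\overline{\phi_m c_n}\) requires an integration by parts in \(z\) against \(c_n\), which does not vanish at the plates. The boundary term \(E_y c_n|_{0}^{a}\) must therefore be killed by the perfect-conductor condition, and this cannot be read pointwise from \(E\in L^2\).

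I would first prove the Green identity
\[
\int_{\OmegaLa}(\nabla\times E)\cdot\overline{\Phi}\,\dd x=\int_{\OmegaLa}E\cdot\overline{\nabla\times\Phi}\,\dd x
\]
for every \(E\in\Hzcurl\) and every smooth lateral-periodic \(\Phi\) on \([0,a]\), with no boundary condition on \(\Phi\). For smooth PEC fields it holds because the boundary integrand \((n\times E)\cdot\overline\Phi\) vanishes. Both sides are continuous in the graph norm, so the identity passes to the closure. Applying it with \(\Phi=\phi_m c_n e_x\), \(\phi_m c_n e_y\) and \(\phi_m s_n e_z\) gives exactly the coefficient formulas for the weak curl.

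For the divergence, \(\nabla\cdot E=0\) in \(\mathcal D'\) gives \(\int E\cdot\nabla\overline\psi=0\) for compactly supported \(\psi\). This extends to \(\psi=\phi_m s_n\in H^1_0\) in the plate variable by \(H^1\)-approximation, since \(E\in L^2\). The resulting identity is \eqref{eq:app-div-constraint}.

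With the curl coefficients identified, Parseval for \(\nabla\times E\in L^2\) and the algebraic identity give finiteness of the weighted sum. Together with \eqref{eq:app-parseval-L2} this is \eqref{eq:app-CPEC-summability}, and it also yields \eqref{eq:app-curl-norm-diagonal}. Uniqueness of \(L^2\) expansions shows the two maps are mutually inverse.

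\textbf{Reduced space.} Finally, \(b_{0,0}=\inner{E_z}{\phi_0c_0}=\inner{E}{\hzero}\), so \(b_{0,0}=0\) is exactly \(E\perp\hzero\). Hence \(\mathfrak C_{\mathrm{phys}}\) corresponds to \(\VPEC\cap\Hphys=\Vphys\).
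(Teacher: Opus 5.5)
Your argument is correct and has the same skeleton as the paper's: coefficient formulas, the algebraic identity \eqref{eq:app-curl-div-identity}, graph-norm convergence of constrained truncations, and then the converse via Parseval. Where you differ is the key step in the converse inclusion, and there your version is the sound one.

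The paper identifies the $L^2$ coefficients of the weak curl by differentiating partial sums, passing to the distributional limit, and invoking uniqueness of Fourier coefficients. As written, that step never uses $E\in\Hzcurl$ rather than merely $E\in\Hcurl$, and it cannot work without that condition. Termwise differentiation of the sine series of $E_y$ gives a cosine series that equals $\partial_zE_y$ only in $\mathcal D'$ of the open slab. Cosine series can vanish there with nonzero coefficients, because their sums are supported at $z=0,a$.

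The field $E=e_y$ makes this concrete: it is divergence-free and curl-free but not PEC. The displayed formulas would give $C^x_{0,n}=-\nu_n a^y_{0,n}$, which is a nonzero constant for odd $n$, and hence an infinite curl norm.

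Your Green identity fills this gap. You test against smooth fields with no boundary condition and extend from smooth PEC fields by graph-norm continuity. That is exactly what removes the endpoint term $E_y c_n\big|_0^a$ and makes the identification of $C^x_{m,n}$ and $C^y_{m,n}$ rigorous. Your $H^1_0$-approximation likewise justifies the paper's one-line claim that no endpoint term appears in the divergence test.

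The paper's route is shorter. Yours shows exactly where the perfect-conductor closure enters, and it repairs the step the paper leaves implicit.
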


\begin{proof}
We split the proof into four elementary steps.

\emph{Step 1: divergence coefficients.}
For a finite trigonometric field of the form
\eqref{eq:app-mixed-tangential}--\eqref{eq:app-mixed-normal}, direct
differentiation gives
\begin{equation}
\label{eq:app-div-expansion}
    \nabla\cdot E
    =
    \sum_{m\in\Z^2}\sum_{n\ge1}
    \paren{\ii k_m\cdot a_{m,n}-\nu_n b_{m,n}}
    \phi_m(r)s_n(z).
\end{equation}
There is no contribution from \(b_{m,0}\), since \(c_0'(z)=0\).  Thus the
constraint \eqref{eq:app-div-constraint} is exactly the Fourier form of
\(\nabla\cdot E=0\).

\emph{Step 2: curl coefficients and the finite-dimensional identity.}
For \(n\ge1\), the curl coefficients are
\begin{align}
\label{eq:app-curl-coeffs-positive-n}
    C^x_{m,n}&=\ii k_{m,y}b_{m,n}-\nu_n a^y_{m,n},\nonumber\\
    C^y_{m,n}&=\nu_n a^x_{m,n}-\ii k_{m,x}b_{m,n},\\
    C^z_{m,n}&=\ii\paren{k_{m,x}a^y_{m,n}-k_{m,y}a^x_{m,n}}.\nonumber
\end{align}
For the normal branch \(n=0\),
\begin{equation}
\label{eq:app-curl-coeffs-zero-n}
    C^x_{m,0}=\ii k_{m,y}b_{m,0},
    \qquad
    C^y_{m,0}=-\ii k_{m,x}b_{m,0},
    \qquad
    C^z_{m,0}=0 .
\end{equation}
For \(n\ge1\), let
\[
    D_{m,n}:=\ii k_m\cdot a_{m,n}-\nu_n b_{m,n} .
\]
A direct expansion gives the identity
\begin{align}
\label{eq:app-curl-div-identity}
    &\abs{C^x_{m,n}}^2+
     \abs{C^y_{m,n}}^2+
     \abs{C^z_{m,n}}^2+
     \abs{D_{m,n}}^2        \\
    &\hspace{3cm}=
     \lambda_{m,n}
     \paren{\abs{a_{m,n}}^2+\abs{b_{m,n}}^2} .\nonumber
\end{align}
One way to check the cancellation is to observe that the terms involving
\(b_{m,n}\overline{a^x_{m,n}}\) and \(b_{m,n}\overline{a^y_{m,n}}\) occur with
opposite signs in the curl part and in \(\abs{D_{m,n}}^2\).  The remaining
terms use the two-dimensional identity
\[
    \abs{k_m\cdot a_{m,n}}^2+
    \abs{k_{m,x}a^y_{m,n}-k_{m,y}a^x_{m,n}}^2
    =
    \abs{k_m}^2\abs{a_{m,n}}^2,
\]
valid for complex \(a_{m,n}\in\C^2\) and real \(k_m\in\R^2\).

\emph{Step 3: from coefficients to the form domain.}
Let \((a,b)\in\mathfrak C_{\mathrm{PEC}}\).  Let \(E^{(N)}\) be any increasing
sequence of finite rectangular truncations of
\eqref{eq:app-mixed-tangential}--\eqref{eq:app-mixed-normal}.  Each truncation is
smooth, lateral-periodic, and has tangential components built from sine modes,
so
\[
    n\times E^{(N)}=0
    \qquad\text{on }z=0,a .
\]
The coefficient constraint \eqref{eq:app-div-constraint} is preserved under
truncation, hence \(\nabla\cdot E^{(N)}=0\).  By
\eqref{eq:app-curl-div-identity} with \(D_{m,n}=0\) and by
\eqref{eq:app-curl-coeffs-zero-n}, the summability condition
\eqref{eq:app-CPEC-summability} implies that \(E^{(N)}\) is Cauchy in the
\(H(\operatorname{curl})\) graph norm.  Since \(\Hzcurl\) is the graph-norm
closure of smooth PEC fields, the limit belongs to \(\Hzcurl\).  The divergence
constraint passes to the limit distributionally.  Therefore the reconstructed
field belongs to \(\VPEC\).  This proves
\[
    \mathfrak C_{\mathrm{PEC}}\subset\VPEC .
\]

\emph{Step 4: from the form domain to coefficients.}
Conversely, let \(E\in\VPEC\).  Its mixed Fourier coefficients are defined by
\eqref{eq:app-mixed-tangential}--\eqref{eq:app-mixed-normal}.  Since
\(\nabla\cdot E=0\) in distributions, testing against
\(\overline{\phi_m}s_n\), \(n\ge1\), gives precisely
\eqref{eq:app-div-constraint}.  No endpoint term arises in this test because
\(s_n(0)=s_n(a)=0\).

The weak curl \(\nabla\times E\) is in \(L^2\).  Its Fourier coefficients are the
distributional derivatives displayed in
\eqref{eq:app-curl-coeffs-positive-n}--\eqref{eq:app-curl-coeffs-zero-n}.  These
formulas are obtained by differentiating finite partial sums, testing against
the mixed sine/cosine modes, and passing to the distributional limit.  Since
\(\nabla\times E\in L^2\), uniqueness of Fourier coefficients identifies the
\(L^2\)-Fourier coefficients of the weak curl with the displayed expressions.
Parseval
therefore gives
\[
    \sum\abs{C}^2<\infty .
\]
Using \eqref{eq:app-curl-div-identity} and the fact that
\(D_{m,n}=0\), we obtain
\[
    \sum_{m\in\Z^2}\sum_{n\ge1}
    \lambda_{m,n}
    \paren{\abs{a_{m,n}}^2+\abs{b_{m,n}}^2}
    +
    \sum_{m\in\Z^2}\abs{k_m}^2\abs{b_{m,0}}^2
    <\infty .
\]
Together with the \(L^2\) Parseval identity \eqref{eq:app-parseval-L2}, this is
exactly \eqref{eq:app-CPEC-summability}.  Hence the coefficient family of
\(E\) lies in \(\mathfrak C_{\mathrm{PEC}}\), and the curl norm is
\eqref{eq:app-curl-norm-diagonal}.  Thus
\[
    \VPEC\subset\mathfrak C_{\mathrm{PEC}} .
\]
The two inclusions prove the identification.

Finally, the mode \(b_{0,0}\) reconstructs the vector field
\[
    b_{0,0}\phi_0(r)c_0(z)e_z
    =
    b_{0,0}(L^2a)^{-1/2}e_z
    =
    b_{0,0}\hzero .
\]
Thus the condition \(b_{0,0}=0\) is exactly orthogonality to the static normal
mode \(\hzero\).  This proves the reduced statement.
\end{proof}

\subsection{Immediate consequences}
\label{app:subsec:immediate-consequences}

The appendix proof gives two useful consequences used repeatedly in the main
text.

\begin{corollary}[Density of finite transverse PEC sums]
\label{cor:app-density-finite-transverse-sums}
Finite trigonometric sums satisfying the coefficient constraint
\eqref{eq:app-div-constraint} are dense in \(\VPEC\) with respect to the norm
\[
    \norm{E}_{L^2}^2+\norm{\nabla\times E}_{L^2}^2 .
\]
After imposing \(b_{0,0}=0\), they are dense in \(\Vphys\).
\end{corollary}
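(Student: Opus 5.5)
The plan is to read the density statement off the isometric identification proved in Lemma~\ref{lem:app-fourier-characterization}. By that lemma, every \(E\in\VPEC\) corresponds to a coefficient family \((a,b)\in\mathfrak C_{\mathrm{PEC}}\). Moreover, by \eqref{eq:app-parseval-L2} and \eqref{eq:app-curl-norm-diagonal}, the graph norm squared \(\norm{E}_{L^2}^2+\norm{\nabla\times E}_{L^2}^2\) equals the weighted sum \eqref{eq:app-CPEC-summability} exactly. Thus the reconstruction map is an isometry from \(\mathfrak C_{\mathrm{PEC}}\), equipped with the weighted \(\ell^2\) norm, onto \(\VPEC\) with the graph norm.

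Density then reduces to a statement about tails of a convergent series. Given \(E\in\VPEC\) with coefficients \((a,b)\), I will take the rectangular truncation \(E^{(N)}\) that keeps the coefficients with \(\abs{m}_\infty\le N\) and \(n\le N\) and sets all others to zero. Three properties need checking.

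\begin{enumerate}
\item The truncation lies in \(\mathfrak C_{\mathrm{PEC}}\). The constraint \eqref{eq:app-div-constraint} is imposed block by block in \((m,n)\), so zeroing whole blocks preserves it.
\item \(E^{(N)}\) is a finite trigonometric sum satisfying the constraint, hence belongs to the class in question. By Step~3 of the proof of the lemma, it is smooth and lies in \(\VPEC\).
\item \(\norm{E-E^{(N)}}^2\) in the graph norm equals the weighted tail of the series \eqref{eq:app-CPEC-summability} over the omitted indices. This tail tends to \(0\) as \(N\to\infty\) because the full series converges.
\end{enumerate}

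The only subtle point is that \eqref{eq:app-curl-norm-diagonal} has to be applied to the difference \(E-E^{(N)}\), and not only to \(E\) itself. This is legitimate because \(E-E^{(N)}\in\VPEC\) by linearity, and its coefficient family is exactly the tail family, which still satisfies the divergence constraint blockwise. I expect this to be the main thing to state carefully; the rest is bookkeeping.

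For the reduced statement, let \(E\in\Vphys\). By the final part of the lemma, \(b_{0,0}=0\). Every truncation then also has \(b_{0,0}=0\), so it lies in \(\VPEC\cap\Hphys=\Vphys\), and the same tail estimate gives convergence in the graph norm. Hence finite constrained sums with \(b_{0,0}=0\) are dense in \(\Vphys\).
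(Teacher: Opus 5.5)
Your proposal is correct and follows the paper's own argument. Rectangular truncations preserve the blockwise constraint, and also $b_{0,0}=0$ in the reduced case. By Parseval together with the diagonal curl formula, the graph-norm distance $\|E-E^{(N)}\|$ is the weighted tail of the convergent summability series. Your explicit check that the diagonal formula applies to $E-E^{(N)}$, whose coefficients form the tail family, is a detail the paper leaves implicit.
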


\begin{proof}
For \((a,b)\in\mathfrak C_{\mathrm{PEC}}\), finite rectangular truncations remain
constraint-satisfying and converge in the graph norm by the weighted
summability condition and \eqref{eq:app-curl-norm-diagonal}.  The reduced case
is the same, with the coefficient \(b_{0,0}\) omitted.
\end{proof}

\begin{corollary}[Zero mode in the coefficient model]
\label{cor:app-zero-mode-coefficient-model}
The only coefficient family in \(\mathfrak C_{\mathrm{PEC}}\) with zero curl norm
is the family with possibly nonzero \(b_{0,0}\) and all other coefficients zero.
Thus the unreduced zero-energy space is \(\operatorname{span}\{\hzero\}\).
\end{corollary}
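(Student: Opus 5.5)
The statement to prove is Corollary~\ref{cor:app-zero-mode-coefficient-model}. I would read it off directly from the curl-norm formula \eqref{eq:app-curl-norm-diagonal} in Lemma~\ref{lem:app-fourier-characterization}.

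First, fix a family \((a,b)\in\mathfrak C_{\mathrm{PEC}}\) whose curl norm vanishes. By the lemma, that norm equals a sum of nonnegative terms:
\[
    \sum_{m\in\Z^2}\sum_{n\ge1}\lambda_{m,n}\paren{\abs{a_{m,n}}^2+\abs{b_{m,n}}^2}
    +\sum_{m\in\Z^2}\abs{k_m}^2\abs{b_{m,0}}^2 .
\]
Since the total is zero, each term must vanish. For \(n\ge1\) the weight satisfies \(\lambda_{m,n}\ge\nu_1^2=(\pi/a)^2>0\), so \(a_{m,n}=0\) and \(b_{m,n}=0\) for every \(m\) and every \(n\ge1\). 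For \(n=0\) and \(m\ne0\) the weight satisfies \(\abs{k_m}^2\ge(2\pi/L)^2>0\), so \(b_{m,0}=0\). The only coefficient left unconstrained is \(b_{0,0}\), whose weight \(\abs{k_0}^2\) is zero.

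Conversely, take the family with only \(b_{0,0}\) nonzero. It satisfies the summability condition \eqref{eq:app-CPEC-summability} trivially. It also satisfies the divergence constraint \eqref{eq:app-div-constraint}, because that constraint only involves indices with \(n\ge1\), where all coefficients are zero. So the family lies in \(\mathfrak C_{\mathrm{PEC}}\), and its curl norm is zero.

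To identify the zero-energy space, use the last paragraph of the proof of Lemma~\ref{lem:app-fourier-characterization}: the coefficient \(b_{0,0}\) reconstructs the field \(b_{0,0}\hzero\). Because the reconstruction map is a bijection onto \(\VPEC\), the set of fields in \(\VPEC\) with \(\qPEC[E,E]=0\) is exactly \(\operatorname{span}\{\hzero\}\).

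For \(E\in\VPEC\) we have \(\qPEC[E,E]=0\) if and only if \(E\in\Ker\LPEC\), by the standard characterization of the kernel of a form-associated operator. This gives the agreement with Proposition~\ref{prop:unreduced-kernel}.

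I expect no real obstacle. The only point needing care is that the weights are strictly positive off the single index \((m,n)=(0,0)\), which follows from the explicit values of \(\nu_n\) and \(k_m\).
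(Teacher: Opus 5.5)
Your proposal is correct and follows the same route as the paper: the diagonal curl-norm formula \eqref{eq:app-curl-norm-diagonal} forces every coefficient with positive weight to vanish, which leaves only \(b_{0,0}\), and that coefficient reconstructs \(b_{0,0}\hzero\). Your converse check and the identification of \(\{E\in\VPEC:\qPEC[E,E]=0\}\) with \(\Ker\LPEC\) are both valid; they spell out steps the paper leaves implicit.
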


\begin{proof}
If the curl norm vanishes, \eqref{eq:app-curl-norm-diagonal} forces all
coefficients with positive weights to vanish.  The only remaining coefficient is
\(b_{0,0}\), which reconstructs \(b_{0,0}\hzero\).
\end{proof}

\begin{remark}[Role of the appendix]
\label{rem:app-role-fourier-domain-lemma}
The diagonal formula \eqref{eq:app-curl-norm-diagonal} is not a separate spectral
assumption.  It follows from the closed PEC form domain and the Fourier
characterization above.  Consequently the later eigenvalue sums for \(\LMx\) are
sums for the self-adjoint operator defined by the Maxwell quadratic form, not a
formal mode-counting replacement for that operator.
\end{remark}
\section{Heat cutoff, zeta extraction, and the plate finite part}
\label{app:heat-zeta-finite-part}

The stochastic identity in the main text uses the heat cutoff
\[
    \lambda^{1/2}e^{-\tau\lambda},
\]
whereas the scalar parallel-plate coefficient is often recalled by analytic
continuation of a spectral zeta function; see, for example,
\cite{Bordag2009,Kirsten2001,Vassilevich2003}.  This appendix records the
explicit connection for the large-area scalar Dirichlet plate trace used in
Section~\ref{sec:finite-part-plate-result}.  The conclusion is that the
standard interaction finite part of the heat-cutoff trace is the same finite
separation-dependent term obtained by zeta extraction:
\[
    -\frac{\pi^2\hbarc}{1440a^3}
\]
for one scalar Dirichlet channel.  The reduced scalar Neumann channel has the
same interaction finite part; its difference from the Dirichlet channel is an
\(a\)-independent zero-vertical branch.

\subsection{The large-area scalar Dirichlet trace}
\label{subsec:app-large-area-dirichlet-trace}

Let the plate separation be \(a>0\).  In the large-area limit, the one-channel
Dirichlet heat-regulated energy density is
\begin{equation}
\label{eq:app-EDtau-density}
    \mathcal E_{D,\tau}(a)
    :=
    \frac{\hbarc}{2}
    \sum_{n=1}^{\infty}
    \int_{\R^2}\frac{\dd^2k}{(2\pi)^2}
    \paren{\abs{k}^2+\paren{\frac{\pi n}{a}}^2}^{1/2}
    e^{-\tau\left(\abs{k}^2+(\pi n/a)^2\right)} .
\end{equation}
For \(\Re w>3/2\), define the scalar Dirichlet spectral zeta density
\begin{equation}
\label{eq:app-ZD-def}
    Z_D(w,a)
    :=
    \sum_{n=1}^{\infty}
    \int_{\R^2}\frac{\dd^2k}{(2\pi)^2}
    \paren{\abs{k}^2+\paren{\frac{\pi n}{a}}^2}^{-w} .
\end{equation}
The heat trace \eqref{eq:app-EDtau-density} is related to \(Z_D\) by Mellin
inversion.

\begin{proposition}[Mellin representation of the heat cutoff]
\label{prop:app-mellin-heat-zeta}
Let \(\gamma>2\).  Then for every \(\tau>0\),
\begin{equation}
\label{eq:app-mellin-heat-zeta}
    \mathcal E_{D,\tau}(a)
    =
    \frac{\hbarc}{2}\,
    \frac{1}{2\pi\ii}
    \int_{\gamma-\ii\infty}^{\gamma+\ii\infty}
    \Gamma(z)\tau^{-z}Z_D\paren{z-\frac12,a}\,\dd z .
\end{equation}
\end{proposition}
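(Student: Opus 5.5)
The plan is to use the Cahen--Mellin inversion formula for the exponential,
\[
    e^{-x}=\frac{1}{2\pi\ii}\int_{\gamma-\ii\infty}^{\gamma+\ii\infty}\Gamma(z)x^{-z}\,\dd z
    \qquad(x>0,\ \gamma>0),
\]
applied with \(x=\tau\lambda\), where \(\lambda=\abs{k}^2+(\pi n/a)^2\). Multiplying by \(\lambda^{1/2}\) gives, for each fixed mode,
\[
    \lambda^{1/2}e^{-\tau\lambda}
    =\frac{1}{2\pi\ii}\int_{\gamma-\ii\infty}^{\gamma+\ii\infty}\Gamma(z)\tau^{-z}\lambda^{-(z-1/2)}\,\dd z .
\]
Next I would integrate this identity against \(\dd^2k/(2\pi)^2\), sum over \(n\ge1\), and multiply by \(\hbarc/2\). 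The left side then becomes \(\mathcal E_{D,\tau}(a)\) by definition \eqref{eq:app-EDtau-density}. On the right, provided the order of integration and summation can be exchanged, the inner sum-integral is \(\sum_n\int\lambda^{-(z-1/2)}\), which is \(Z_D(z-\tfrac12,a)\) by \eqref{eq:app-ZD-def}. This requires \(\Re(z-\tfrac12)>3/2\), i.e.\ \(\gamma>2\), which matches the hypothesis.

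The main step is justifying Fubini--Tonelli on the product of the contour line, \(\R^2\), and \(\N\). On the line \(z=\gamma+\ii t\) we have \(\abs{\tau^{-z}}=\tau^{-\gamma}\) and \(\abs{\lambda^{-(z-1/2)}}=\lambda^{1/2-\gamma}\), so the absolute integrand factors as
\[
    \abs{\Gamma(\gamma+\ii t)}\,\tau^{-\gamma}\,\lambda^{1/2-\gamma}.
\]
Two bounds then finish the argument. First, \(\int_\R\abs{\Gamma(\gamma+\ii t)}\,\dd t<\infty\), by Stirling's estimate \(\abs{\Gamma(\gamma+\ii t)}\sim\sqrt{2\pi}\,\abs{t}^{\gamma-1/2}e^{-\pi\abs{t}/2}\). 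Second, \(\sum_n\int\lambda^{1/2-\gamma}\,\dd^2k<\infty\) for \(\gamma>2\). Indeed, the \(k\)-integral equals \((4\pi)^{-1}(\gamma-\tfrac32)^{-1}(\pi n/a)^{3-2\gamma}\), by \eqref{eq:dimensional-integral-section06} with \(s=\gamma\) in its convergent range. Summing over \(n\) gives a multiple of \(\zeta(2\gamma-3)\), which is finite exactly when \(\gamma>2\). The product of these two bounds is a finite absolute bound, so the exchange is legitimate and the identity follows.

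I expect the only delicate point to be this absolute-convergence check, specifically the \(n\)-sum, which fixes the threshold \(\gamma>2\). The pointwise Mellin inversion is standard: it can be taken as the inverse of \(\Gamma(z)=\int_0^\infty x^{z-1}e^{-x}\,\dd x\), justified because \(\Gamma\) decays exponentially on vertical lines.
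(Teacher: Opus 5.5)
Your proposal is correct and follows the paper's proof. Both apply Cahen--Mellin inversion to \(e^{-\tau\lambda}\), multiply by \(\lambda^{1/2}\), and exchange the contour integral with the \(n\)-sum and \(k\)-integral, using absolute convergence on \(\Re z=\gamma>2\) and the exponential decay of \(\Gamma(\gamma+\ii t)\). The only difference is that you write out this step explicitly, via the majorant \(\abs{\Gamma(\gamma+\ii t)}\tau^{-\gamma}\lambda^{1/2-\gamma}\) and the \(\zeta(2\gamma-3)\) bound, where the paper states it in a single line.
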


\begin{proof}
For \(\gamma>2\), the exponent \(z-1/2\) has real part greater than \(3/2\),
so the defining sum and integral for \(Z_D(z-1/2,a)\) are absolutely
convergent on the vertical line.  The scalar Mellin inversion formula
\[
    e^{-\tau\lambda}
    =
    \frac{1}{2\pi\ii}
    \int_{\gamma-\ii\infty}^{\gamma+\ii\infty}
    \Gamma(z)\tau^{-z}\lambda^{-z}\,\dd z
    \qquad(\lambda>0)
\]
gives
\[
    \lambda^{1/2}e^{-\tau\lambda}
    =
    \frac{1}{2\pi\ii}
    \int_{\gamma-\ii\infty}^{\gamma+\ii\infty}
    \Gamma(z)\tau^{-z}\lambda^{1/2-z}\,\dd z .
\]
Absolute convergence on the chosen line, together with exponential decay of
\(\Gamma(\gamma+\ii t)\) in vertical strips, justifies interchanging the contour
integral with the sum over \(n\) and the integral over \(k\).  This gives
\eqref{eq:app-mellin-heat-zeta}.
\end{proof}

\subsection{Analytic continuation of the Dirichlet zeta density}
\label{subsec:app-ZD-continuation}

The two-dimensional momentum integral gives the meromorphic continuation
explicitly.

\begin{proposition}[Dirichlet zeta density]
\label{prop:app-ZD-continuation}
The function \(Z_D(w,a)\) admits the meromorphic continuation
\begin{equation}
\label{eq:app-ZD-continuation}
    Z_D(w,a)
    =
    \frac{1}{4\pi}
    \frac{\Gamma(w-1)}{\Gamma(w)}
    \paren{\frac{\pi}{a}}^{2-2w}
    \zeta(2w-2).
\end{equation}
Equivalently,
\begin{equation}
\label{eq:app-ZD-z-shifted}
    Z_D\paren{z-\frac12,a}
    =
    \frac{1}{4\pi}
    \frac{1}{z-3/2}
    \paren{\frac{\pi}{a}}^{3-2z}
    \zeta(2z-3).
\end{equation}
\end{proposition}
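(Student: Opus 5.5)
The plan is to compute the momentum integral in closed form for each \(n\) in the convergent half-plane \(\Re w>3/2\), sum over \(n\) to get a Riemann zeta function, and then read off the meromorphic continuation from the continuation of \(\zeta\).

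\emph{Step 1: the inner integral.} Fix \(n\ge1\) and put \(M=\pi n/a>0\). Using polar coordinates in \(\R^2\) and the substitution \(u=\abs{k}^2\), we get, for \(\Re w>1\),
\[
    \int_{\R^2}\frac{\dd^2k}{(2\pi)^2}\paren{\abs{k}^2+M^2}^{-w}
    =\frac{1}{4\pi}\int_0^\infty (u+M^2)^{-w}\,\dd u
    =\frac{1}{4\pi}\frac{M^{2-2w}}{w-1}.
\]
Since \(1/(w-1)=\Gamma(w-1)/\Gamma(w)\), this is the two-dimensional analogue of the Schwinger computation in Lemma~\ref{lem:hilbert-space-riesz-integral}. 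It is also the case of \eqref{eq:dimensional-integral-section06} with \(1/2-s\) replaced by \(-w\).

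\emph{Step 2: the sum over \(n\).} Summing over \(n\ge1\) gives
\[
    \frac{1}{4\pi(w-1)}\paren{\frac{\pi}{a}}^{2-2w}\sum_{n\ge1}n^{2-2w}
    =\frac{1}{4\pi(w-1)}\paren{\frac{\pi}{a}}^{2-2w}\zeta(2w-2).
\]
The Dirichlet series converges absolutely exactly when \(\Re(2w-2)>1\), that is, \(\Re w>3/2\), matching the stated half-plane. Tonelli's theorem, applied to the positive integrand for real \(w\) and then to its modulus for complex \(w\), justifies exchanging the sum and the integral. This proves \eqref{eq:app-ZD-continuation} on \(\Re w>3/2\).

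\emph{Step 3: the continuation.} The right-hand side of \eqref{eq:app-ZD-continuation} is meromorphic on \(\C\), because \(\zeta\) is meromorphic with a single pole at \(1\) and \((\pi/a)^{2-2w}=e^{(2-2w)\log(\pi/a)}\) is entire. By uniqueness of analytic continuation, this formula defines the continuation of \(Z_D\). Its only possible poles are at \(w=1\) (from \(\Gamma(w-1)/\Gamma(w)\)) and at \(w=3/2\) (from \(\zeta(2w-2)\)). Substituting \(w=z-1/2\) gives \(w-1=z-3/2\), \(2-2w=3-2z\) and \(2w-2=2z-3\), which is \eqref{eq:app-ZD-z-shifted}.

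\emph{Main obstacle.} There is no serious obstacle. The only point needing care is the interchange of summation and integration together with the correct convergence region, so that analytic continuation applies from a genuinely open half-plane.
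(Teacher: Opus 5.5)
Your proposal is correct and follows essentially the same route as the paper. You evaluate the two-dimensional momentum integral for each $n$ in the half-plane $\operatorname{Re} w>3/2$, sum over $n$ to obtain $(\pi/a)^{2-2w}\zeta(2w-2)$, invoke uniqueness of analytic continuation for the meromorphic right-hand side, and then substitute $w=z-1/2$; beyond this, you carry out the polar-coordinate computation and justify the sum--integral interchange via Tonelli/Fubini, which the paper leaves implicit by citing the standard dimensional integral.
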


\begin{proof}
For \(\Re w>3/2\), the standard dimensional integral gives
\begin{equation}
\label{eq:app-dimensional-integral}
    \int_{\R^2}\frac{\dd^2k}{(2\pi)^2}
    \paren{\abs{k}^2+M^2}^{-w}
    =
    \frac{1}{4\pi}\frac{\Gamma(w-1)}{\Gamma(w)}M^{2-2w}.
\end{equation}
Substituting \(M=\pi n/a\) and summing over \(n\ge1\) gives
\eqref{eq:app-ZD-continuation} in the convergent half-plane.  The right-hand
side is meromorphic in \(w\), so it gives the analytic continuation.  The
shifted form \eqref{eq:app-ZD-z-shifted} follows from \(w=z-1/2\) and
\(\Gamma(w-1)/\Gamma(w)=1/(w-1)=1/(z-3/2)\).
\end{proof}

The corresponding energy zeta regularization is
\begin{equation}
\label{eq:app-energy-zeta-def}
    \mathcal E_D(s,a)
    :=
    \frac{\hbarc}{2}\mu^{2s} Z_D\paren{s-\frac12,a},
\end{equation}
where \(\mu\) is an auxiliary reference scale.  The scale \(\mu\) drops out at
\(s=0\) because the continued expression is regular there.  Using
\eqref{eq:app-ZD-continuation},
\begin{align}
\label{eq:app-energy-zeta-evaluated}
    \mathcal E_D(s,a)
    &=
    \frac{\hbarc}{8\pi}
    \mu^{2s}
    \frac{\Gamma(s-3/2)}{\Gamma(s-1/2)}
    \paren{\frac{\pi}{a}}^{3-2s}
    \zeta(2s-3).
\end{align}
At \(s=0\),
\begin{equation}
\label{eq:app-gamma-zeta-values}
    \frac{\Gamma(-3/2)}{\Gamma(-1/2)}=-\frac23,
    \qquad
    \zeta(-3)=\frac{1}{120}.
\end{equation}
Therefore
\begin{equation}
\label{eq:app-zeta-dirichlet-result}
    \mathcal E_D(0,a)
    =
    -\frac{\pi^2\hbarc}{1440a^3}.
\end{equation}

\subsection{Heat asymptotics and the interaction finite part}
\label{subsec:app-heat-asymptotics}

We now extract the same value directly from the heat cutoff.  Insert
\eqref{eq:app-ZD-z-shifted} into the Mellin representation
\eqref{eq:app-mellin-heat-zeta}.  The integrand has the following relevant
poles to the right of any line \(\Re z<0\):
\begin{itemize}
    \item a pole at \(z=2\), from the pole of \(\zeta(2z-3)\);
    \item a pole at \(z=3/2\), from the factor \((z-3/2)^{-1}\);
    \item a pole at \(z=0\), from \(\Gamma(z)\).
\end{itemize}
The first two produce local divergent terms.  The pole at \(z=0\) produces
the finite separation-dependent term.

\begin{proposition}[Short-time expansion of the scalar Dirichlet density]
\label{prop:app-dirichlet-heat-expansion}
As \(\tau\to0^+\), the heat-regulated Dirichlet density has the expansion
\begin{equation}
\label{eq:app-dirichlet-heat-expansion}
    \mathcal E_{D,\tau}(a)
    =
    \frac{\hbarc\,a}{8\pi^2}\tau^{-2}
    -
    \frac{\hbarc}{32\sqrt\pi}\tau^{-3/2}
    -
    \frac{\pi^2\hbarc}{1440a^3}
    +
    O(\tau).
\end{equation}
Consequently, under the standard parallel-plate interaction finite-part
prescription,
\begin{equation}
\label{eq:app-heat-FP-Dirichlet-result}
    \FP_{\mathrm{int},\,\tau\to0^+}\mathcal E_{D,\tau}(a)
    =
    -\frac{\pi^2\hbarc}{1440a^3}.
\end{equation}
\end{proposition}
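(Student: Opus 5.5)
The plan is to shift the contour in the Mellin representation of Proposition~\ref{prop:app-mellin-heat-zeta} to the left and collect residues. We insert the explicit continuation \eqref{eq:app-ZD-z-shifted}, so the integrand becomes
\[
    \frac{\hbarc}{2}\,\frac{1}{4\pi}\,\Gamma(z)\tau^{-z}\frac{1}{z-3/2}\paren{\frac{\pi}{a}}^{3-2z}\zeta(2z-3).
\]
Starting from the line \(\Re z=\gamma>2\), we move it to \(\Re z=-\delta\) for some \(0<\delta<1\). The poles crossed are exactly those at \(z=2\), \(z=3/2\) and \(z=0\). Horizontal segments at \(\abs{\Im z}\to\infty\) vanish because \(\Gamma\) decays exponentially in vertical strips, while \(\zeta(2z-3)\) grows at most polynomially there (convexity/Phragm\'en--Lindel\"of bounds).

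Next we compute the three residues.

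At \(z=2\), \(\zeta(2z-3)\) has residue \(1/2\). Since \(\Gamma(2)=1\), \(1/(z-3/2)=2\) and \((\pi/a)^{-1}=a/\pi\), the contribution is
\[
    \frac{\hbarc}{8\pi}\cdot\frac{a}{\pi}\,\tau^{-2}=\frac{\hbarc\,a}{8\pi^2}\tau^{-2}.
\]

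At \(z=3/2\), the factor \(1/(z-3/2)\) gives residue \(\Gamma(3/2)\tau^{-3/2}\zeta(0)\,(\pi/a)^0\). Using \(\Gamma(3/2)=\sqrt\pi/2\) and \(\zeta(0)=-1/2\), the contribution is
\[
    \frac{\hbarc}{8\pi}\cdot\paren{-\frac{\sqrt\pi}{4}}\tau^{-3/2}=-\frac{\hbarc}{32\sqrt\pi}\tau^{-3/2}.
\]
This term is \(a\)-independent, as expected for a one-plate term.

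At \(z=0\), the residue of \(\Gamma\) is \(1\), which gives \((\hbarc/2)Z_D(-1/2,a)=\mathcal E_D(0,a)=-\pi^2\hbarc/(1440a^3)\) by \eqref{eq:app-zeta-dirichlet-result}.

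For the remainder, the shifted integral on \(\Re z=-\delta\) is \(O(\tau^{\delta})\). To obtain \(O(\tau)\), we push further to \(\Re z=-1-\delta\). The pole at \(z=-1\) from \(\Gamma\) is multiplied by \(\zeta(-5)\neq0\), so it gives an \(O(\tau)\) term. The zero of \(\zeta(2z-3)\) at \(z=1/2\) is harmless. The remaining integral is \(O(\tau^{1+\delta})\). This yields \eqref{eq:app-dirichlet-heat-expansion}.

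The finite-part statement \eqref{eq:app-heat-FP-Dirichlet-result} then follows from Definition~\ref{def:parallel-plate-interaction-finite-part}. The \(\tau^{-2}\) term is proportional to \(a\) with an \(a\)-independent coefficient: it is the bulk vacuum energy of the slab volume, and it is subtracted as a bulk term. The \(\tau^{-3/2}\) term is \(a\)-independent and is a one-plate self-energy. The \(O(\tau)\) terms vanish as \(\tau\to0^+\). What remains is \(-\pi^2\hbarc/(1440a^3)\).

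The main obstacle is justifying the contour shift, that is, uniform control of \(\zeta(2z-3)(\pi/a)^{3-2z}\tau^{-z}/(z-3/2)\) against \(\Gamma(z)\) on the horizontal segments and on the new vertical line. I would first cite the standard polynomial growth of \(\zeta(\sigma+it)\) for \(\sigma\) in compact intervals, combined with Stirling's bound \(\abs{\Gamma(\sigma+it)}\lesssim \abs{t}^{\sigma-1/2}e^{-\pi\abs{t}/2}\). These give absolute convergence and the vanishing of the connecting segments.
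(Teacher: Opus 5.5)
Your proposal is correct and is essentially the paper's own argument: insert \eqref{eq:app-ZD-z-shifted} into the Mellin representation and shift the contour to the left, justified by the decay of \(\Gamma\) in vertical strips and polynomial bounds on \(\zeta\). Then collect the residues at \(z=2\), \(z=3/2\), \(z=0\) (your values match the paper's), push past \(z=-1\) for the \(O(\tau)\) remainder, and read off the finite part by treating the \(\tau^{-2}\) term as bulk and the \(\tau^{-3/2}\) term as one-plate; your intermediate line \(\Re z=-\delta\) and explicit Stirling bound simply spell out the same justification in more detail.
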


\begin{proof}
Starting from \eqref{eq:app-mellin-heat-zeta}, shift the vertical contour to
the left.  The decay of \(\Gamma(z)\) in vertical strips and standard polynomial
bounds for \(\zeta\) on vertical lines justify the shift for this explicit
meromorphic integrand.  To obtain the displayed expansion with an \(O(\tau)\)
remainder, one may shift past \(z=0\) and then to any vertical line
\(\Re z<-1\), absorbing the residue at \(z=-1\) and all subsequent lower-order
terms into \(O(\tau)\).  The nonvanishing terms before order \(\tau\) are the
residues of
\[
    \frac{\hbarc}{2}\Gamma(z)\tau^{-z}Z_D\paren{z-\frac12,a}.
\]
At \(z=2\), the pole of \(\zeta(2z-3)\) has residue \(1/2\), and
\eqref{eq:app-ZD-z-shifted} gives
\[
    \operatorname*{Res}_{z=2}
    Z_D\paren{z-\frac12,a}
    =
    \frac{a}{4\pi^2}.
\]
Since \(\Gamma(2)=1\), this contributes
\[
    \frac{\hbarc a}{8\pi^2}\tau^{-2}.
\]
At \(z=3/2\), the residue of \((z-3/2)^{-1}\) is one and \(\zeta(0)=-1/2\), so
\[
    \operatorname*{Res}_{z=3/2}
    Z_D\paren{z-\frac12,a}
    =
    -\frac{1}{8\pi}.
\]
Multiplying by \(\Gamma(3/2)=\sqrt\pi/2\) and by \(\hbarc/2\) gives the term
\[
    -\frac{\hbarc}{32\sqrt\pi}\tau^{-3/2}.
\]
At \(z=0\), the pole is the pole of \(\Gamma(z)\), with residue one.  Therefore
the constant term is
\[
    \frac{\hbarc}{2}Z_D\paren{-\frac12,a}
    =
    \mathcal E_D(0,a)
    =
    -\frac{\pi^2\hbarc}{1440a^3},
\]
using \eqref{eq:app-zeta-dirichlet-result}.

The pole at \(z=2\) is the bulk vacuum divergence proportional to the volume
per unit area, namely to \(a\).  The pole at \(z=3/2\) is the one-plate surface
divergence and is independent of \(a\).  The standard interaction finite part
subtracts these local terms and retains the finite separation-dependent
constant.  This proves \eqref{eq:app-heat-FP-Dirichlet-result}.
\end{proof}

\begin{remark}[What is meant by heat--zeta compatibility]
\label{rem:app-heat-zeta-compatibility}
Equation \eqref{eq:app-heat-FP-Dirichlet-result} is not a claim that arbitrary
finite counterterms are scheme-independent.  It says that for the standard
parallel-plate interaction prescription, the finite term extracted from the
heat cutoff \(\lambda^{1/2}e^{-\tau\lambda}\) agrees with the analytic
continuation value \(\mathcal E_D(0,a)\).  This is the compatibility used in
Section~\ref{sec:finite-part-plate-result}.
\end{remark}

\subsection{Reduced Neumann branch}
\label{subsec:app-reduced-neumann-branch}

The scalar Neumann comparison operator with its constant zero mode removed has
vertical modes \(n\ge1\), matching the Dirichlet eigenvalues, plus the
zero-vertical branch \(n=0\), \(m\ne0\).  At the heat-regulated finite-area
level this is exactly the identity
\[
    \ENtau(L,a)=\EDtau(L,a)+\Btau(L),
\]
with \(\Btau(L)\) defined in \eqref{eq:Btau-def-section06}.  The large-area
limit of this branch is
\begin{align}
\label{eq:app-Btau-large-area}
    \lim_{L\to\infty}\frac{\Btau(L)}{L^2}
    &=
    \frac{\hbarc}{2}
    \int_{\R^2}\frac{\dd^2k}{(2\pi)^2}\abs{k}e^{-\tau\abs{k}^2}  \\
    &=
    \frac{\hbarc}{16\sqrt\pi}\tau^{-3/2}.
\end{align}
It has no dependence on the plate separation \(a\).  Hence
\begin{equation}
\label{eq:app-Btau-FP-zero}
    \FP_{\mathrm{int},\,\tau\to0^+}
    \left[
        \lim_{L\to\infty}\frac{\Btau(L)}{L^2}
    \right]
    =0.
\end{equation}
Therefore the reduced Neumann scalar channel has the same interaction finite
part as the Dirichlet scalar channel:
\begin{equation}
\label{eq:app-Neumann-FP-equals-Dirichlet}
    \FP_{\mathrm{int},\,\tau\to0^+}
    \left[
        \lim_{L\to\infty}\frac{\ENtau(L,a)}{L^2}
    \right]
    =
    -\frac{\pi^2\hbarc}{1440a^3}.
\end{equation}
This is the only Neumann input needed for the Maxwell plate coefficient: the
Maxwell trace is \(\LD\oplus\LNp\), and \(\LNp\) differs from \(\LD\) only by
an \(a\)-independent branch at the interaction-energy level.

\begin{remark}[No additional physical assumption]
\label{rem:app-no-extra-assumption}
The subtraction of \eqref{eq:app-Btau-large-area} is not a new assumption about
Maxwell theory.  It is the already stated parallel-plate interaction finite
part: terms independent of the plate separation do not contribute to the
interaction energy or force.  The branch is retained in the regulated trace and
removed only at the finite-part stage.
\end{remark}
\section{Gauge-potential formulations and the physical electric-field trace}
\label{app:optional-gauge-comment}

The main text deliberately avoids using a vector potential as the primary
Hilbert-space variable.  Instead it defines the reduced Maxwell operator
\(\LMx\) directly on divergence-free electric fields satisfying the perfect
conductor tangential boundary condition, with the static normal zero mode
removed.  This appendix explains how that choice relates to the more familiar
vector-potential language for the nonzero parallel-plate modes.  Nothing in the
main proof depends on this appendix.

There are two reasons for keeping the main proof in the electric-field
formulation.  First, the physical perfect-conductor boundary condition is
stated directly on the fields:
\begin{equation}
    n\times E=0,
    \qquad
    n\cdot B=0
    \quad \text{on the plates}.
    \label{eq:PEC-field-conditions-appendixC}
\end{equation}
Second, the vector potential has a gauge redundancy.  If one uses \(A\) as the
basic variable, one must either quotient by gauge transformations, impose a
gauge condition and handle residual gauge freedom, or work in a covariant
gauge-fixed formalism with the associated ghost determinant.  The electric-field
construction in Sections~\ref{sec:maxwell-operator}--\ref{sec:te-tm-decomposition}
works directly with the reduced physical Hilbert space and therefore avoids
introducing gauge variables that later have to be removed.

\subsection{Nonzero modes and temporal/Coulomb gauge}
\label{subsec:nonzero-modes-temporal-coulomb}

We use the time dependence \(\ee^{-\ii\omega t}\).  For a nonzero-frequency
source-free Maxwell mode in vacuum, Faraday's law gives
\begin{equation}
    B=\frac{1}{\ii\omega}\,\nabla\times E,
    \label{eq:B-from-E-appendixC}
\end{equation}
and Ampere's law gives
\begin{equation}
    \nabla\times\nabla\times E
    =\frac{\omega^2}{c^2}E,
    \qquad
    \nabla\cdot E=0 .
    \label{eq:E-curlcurl-nonzero-appendixC}
\end{equation}
Thus the eigenvalue \(\lambda\) of the spatial operator \(\LMx\) is related to
the physical angular frequency by
\begin{equation}
    \omega=c\sqrt{\lambda}.
    \label{eq:omega-lambda-appendixC}
\end{equation}
For the operator-theoretic parts of the paper, the factor \(c\) is kept outside
the spectral operator and appears in the trace as \(\hbar c\sqrt{\lambda}/2\).

For nonzero modes one may choose temporal gauge, \(\Phi=0\), so that
\begin{equation}
    E=-\partial_t A=\ii\omega A.
    \label{eq:E-iomega-A-appendixC}
\end{equation}
Since \(\omega\neq0\), this gives a one-to-one correspondence between electric
field modes and vector-potential modes:
\begin{equation}
    A=\frac{1}{\ii\omega}E .
    \label{eq:A-from-E-appendixC}
\end{equation}
If \(\nabla\cdot E=0\), then \(\nabla\cdot A=0\).  Thus the corresponding
potential is in Coulomb gauge.  Moreover, the perfect-conductor condition
\(n\times E=0\) implies
\begin{equation}
    n\times A=0
    \quad \text{on the plates}.
    \label{eq:ncrossA-appendixC}
\end{equation}
Conversely, a nonzero-frequency Coulomb-gauge vector-potential mode satisfying
\(n\times A=0\) yields a physical electric mode by \(E=\ii\omega A\).

The magnetic boundary condition in \eqref{eq:PEC-field-conditions-appendixC}
is also compatible with this description.  Indeed, on the flat plates
\(z=0,a\), if the tangential trace of \(A\) vanishes, then the normal component
of \(\nabla\times A\) is
\begin{equation}
    n\cdot(\nabla\times A)
    =\partial_x A_y-\partial_y A_x,
    \label{eq:normal-curl-A-appendixC}
\end{equation}
with the right-hand side understood in the tangential trace sense.  Since the
tangential components of \(A\) vanish on the flat boundary, their tangential
derivatives vanish there in the trace sense for smooth modes, and then by
density for the form-domain modes.  Hence
\begin{equation}
    n\cdot B=n\cdot(\nabla\times A)=0.
    \label{eq:n-dot-B-from-A-appendixC}
\end{equation}
This is the field-level reason why the boundary condition used in the main
text captures the perfect-conductor plate problem.

\begin{proposition}[Mode-level equivalence for nonzero finite-volume modes]
\label{prop:mode-level-equivalence-A-E}
For the finite slab \(\OmegaLa=T_L^2\times[0,a]\), the nonzero eigenmodes of
\(\LMx\) on the reduced electric-field Hilbert space are in one-to-one
correspondence with nonzero-frequency Coulomb-gauge vector-potential modes
satisfying \(n\times A=0\) on the plates.  Under this correspondence the
spatial eigenvalue is unchanged.
\end{proposition}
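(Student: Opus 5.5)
The plan is to build the correspondence explicitly in both directions through the temporal-gauge relation \eqref{eq:A-from-E-appendixC}. We then check that each direction preserves divergence-freeness, the tangential boundary condition, and the eigenvalue equation. After that we verify that the two maps are mutually inverse on eigenspaces.

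\emph{Electric to potential.} Start from a nonzero eigenmode \(E\in\Dom(\LMx)\) with \(\LMx E=\lambda E\) and \(\lambda>0\); positivity is guaranteed by the gap in Proposition~\ref{prop:finite-volume-gap}. Set \(\omega=c\sqrt\lambda\) and \(A=E/(\ii\omega)\). Because \(A\) is a nonzero scalar multiple of \(E\), the linear conditions carry over directly:
\begin{itemize}
\item \(A\in\VPEC\);
\item \(\nabla\cdot A=0\), so \(A\) is in Coulomb gauge;
\item \(n\times A=0\) in the \(\Hzcurl\) closure sense;
\item \(A\) satisfies the weak eigenvalue equation \(\qPEC[A,F]=\lambda\inner{A}{F}\) for all \(F\in\Vphys\).
\end{itemize}
To make the eigenvalue statement concrete, I would read off the mode list from Section~\ref{sec:te-tm-decomposition}: the TE, TM, \(E^x_{0,n}\), \(E^y_{0,n}\) and \(E^0_m\) modes, all smooth, with \(\nabla\times\nabla\times A=-\Delta A=\lambda A\) holding classically. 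This shows \(A\) is a nonzero-frequency Coulomb-gauge mode with the same spatial eigenvalue. The check \(n\cdot(\nabla\times A)=0\) follows from \eqref{eq:normal-curl-A-appendixC}.

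\emph{Potential to electric.} Here the first task is to fix the meaning of ``nonzero-frequency Coulomb-gauge mode with \(n\times A=0\)''. I will take it to be a field in \(\Hzcurl\) with \(\nabla\cdot A=0\) and \(\nabla\times\nabla\times A=(\omega^2/c^2)A\) weakly, with \(\omega\ne0\). The point is that this is exactly the weak eigen-equation for the form \(\qPEC\) on \(\VPEC\). Setting \(E=\ii\omega A\) gives an element of \(\VPEC\). Testing against \(h_0\) gives \(\lambda\inner{E}{h_0}=\qPEC[E,h_0]=0\), and since \(\lambda\ne0\) this forces \(E\perp h_0\), so \(E\in\Vphys\). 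By \eqref{eq:LMx-domain-formal}, \(E\in\Dom(\LMx)\) with \(\LMx E=\lambda E\).

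\emph{Inverses and bijectivity.} The maps \(E\mapsto E/(\ii\omega)\) and \(A\mapsto\ii\omega A\) are inverse to each other on each eigenspace, since \(\omega\) is determined by \(\lambda\). Hence they give a bijection between eigenspaces, and therefore between orthonormal mode bases, eigenvalue by eigenvalue.

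\emph{Main obstacle.} The delicate step is the reverse direction. It requires pinning down the precise function space for the vector-potential modes, so that the Coulomb-gauge eigenproblem coincides with the form eigenproblem on \(\VPEC\). In particular it must exclude the static (\(\omega=0\)) gradient and zero modes, which is exactly where the \(\lambda\ne0\) hypothesis enters.
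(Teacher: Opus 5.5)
Your proposal is correct and is essentially the paper's proof: both directions are the scalar rescaling $A=(\ii\omega)^{-1}E$, $E=\ii\omega A$ with $\omega=c\sqrt{\lambda}$, which preserves transversality, the tangential condition $n\times E=0$, and the eigenvalue equation, and the two maps are mutually inverse. Your extra step of testing the weak equation against $h_0$ to get $E\perp h_0$, and hence $E\in\Dom(\LMx)$ via the form characterization, makes explicit a point the paper's converse direction only asserts.
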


\begin{proof}
Let \(E\in\Hphys\) be an eigenmode of \(\LMx\) with eigenvalue
\(\lambda>0\).  Set \(\omega=c\sqrt{\lambda}\) and define
\(A=(\ii\omega)^{-1}E\).  Then \(\nabla\cdot A=0\), since
\(\nabla\cdot E=0\), and \(n\times A=0\), since \(n\times E=0\).  Also
\begin{equation}
    \nabla\times\nabla\times A
    =\lambda A,
\end{equation}
because the equation is obtained from the corresponding eigenvalue equation
for \(E\) by multiplication by the nonzero scalar \((\ii\omega)^{-1}\).  Thus
\(A\) is a Coulomb-gauge vector-potential mode with the same spatial
eigenvalue.

Conversely, let \(A\) be a Coulomb-gauge vector-potential mode satisfying
\(n\times A=0\) and
\(\nabla\times\nabla\times A=\lambda A\) with \(\lambda>0\).  With
\(\omega=c\sqrt{\lambda}\), define \(E=\ii\omega A\).  Then
\(\nabla\cdot E=0\), \(n\times E=0\), and
\(\nabla\times\nabla\times E=\lambda E\).  Hence \(E\) is a nonzero eigenmode
of \(\LMx\) in the physical electric-field space.  These two constructions are
inverse to each other.
\end{proof}

\begin{remark}[The removed static mode]
\label{rem:static-mode-no-A-oscillator}
The mode \(E=h_0\), removed in \eqref{eq:Hphys-def}, has eigenvalue zero and
represents a uniform static normal electric field.  It is not a nonzero photon
oscillator and is not covered by the temporal-gauge correspondence above.  This
is one reason the finite-volume construction first removes \(h_0\) and only
then forms \(\LMx^{-1}\).
\end{remark}

\subsection{Relation to the TE/TM trace}
\label{subsec:gauge-comment-te-tm}

The TE/TM decomposition in Section~\ref{sec:te-tm-decomposition} is not a
gauge artifact.  It is a decomposition of the reduced physical Maxwell
Hilbert space itself.  For \(m\ne0\) and \(n\ge1\), the two modes
\(E^{\mathrm{TE}}_{m,n}\) and \(E^{\mathrm{TM}}_{m,n}\) form an orthonormal
basis of the two-dimensional divergence-free coefficient block
\(W_{m,n}\) in \eqref{eq:Wmn-section05}.  For \(m=0\), the TE/TM labels are
not canonical, but the physical multiplicity is still two.  For \(n=0\), the
normal branch contributes one reduced Neumann-type family, with the constant
zero mode removed.

Thus the trace identity
\begin{equation}
    \Trphys f(\LMx)
    =\Tr f(\LD)+\Tr' f(\LN)
    \label{eq:gauge-comment-DN-trace}
\end{equation}
from Theorem~\ref{thm:maxwell-trace-Dirichlet-Neumann} is a statement about
physical Maxwell modes.  The factor of two comes from the physical Maxwell
mode multiplicities after the unphysical degrees of freedom have been removed.

\subsection{What a covariant gauge-fixed calculation would have to show}
\label{subsec:covariant-gauge-fixed-comment}

A covariant path-integral treatment would start from a gauge-fixed action for
a vector potential and would include ghost fields.  With boundaries, the gauge
condition, the vector boundary conditions, and the ghost boundary conditions
must be chosen compatibly.  That is a different formulation from the one used
in this paper.

For the present flat parallel-plate geometry, such a calculation would have to
recover the same reduced physical trace,
\begin{equation}
    \frac{\hbar c}{2}\Trphys\paren{\LMx^{1/2}\ee^{-\tau\LMx}},
    \label{eq:expected-covariant-endpoint-appendixC}
\end{equation}
because after gauge and ghost cancellations only the physical TE and TM
oscillators remain.  However, the main text does not need this determinant
identity and does not prove it in general.  Instead, it constructs the physical
Hilbert space first and proves the trace identity directly from the Maxwell
mode decomposition.

This distinction is important.  The statement proved in this paper is a
Riesz/Gaussian representation on the reduced physical Maxwell space, followed
by the standard parallel-plate interaction finite part.  Questions of
covariant path-integral derivation and of other boundary geometries lie beyond
the scope of this appendix.

\subsection{Differential-form terminology}
\label{subsec:differential-form-comment}

In differential-form language, the electric field is a one-form and the
condition \(n\times E=0\) on a plate is the vanishing of the tangential
pullback of that one-form.  This is the relative trace condition for
one-forms.  The corresponding operator-domain boundary conditions are encoded
here through the closed curl--curl form rather than imposed separately.  The
constraint \(\nabla\cdot E=0\) is the co-closedness condition.
Thus the main Hilbert space may equivalently be described as the reduced space
of co-closed relative one-forms, with the harmonic relative one-form
corresponding to the constant normal electric field removed.

This form-language description is useful for comparing with other treatments
of Maxwell theory with boundary.  It should not obscure the concrete content
of the present proof: in the slab geometry the physical operator is the
positive self-adjoint curl-curl operator defined by the quadratic form
\(\qMx[E]=\norm{\nabla\times E}_{L^2}^2\) on
\(\Vphys\), and its finite-volume spectrum is the one computed explicitly in
Sections~\ref{sec:fourier-domain-heat-trace} and~\ref{sec:te-tm-decomposition}.

\begin{remark}[No additional input to the main theorem]
\label{rem:gauge-comment-no-additional-input}
The discussion in this appendix supplies interpretation only.  The proof of
the Maxwell Riesz reduction, the Gaussian quadratic-form identity, and the
parallel-plate finite part uses the electric-field operator \(\LMx\), the
finite-volume spectral gap, the heat-trace bounds, and the TE/TM trace identity
proved in the main text.  It does not use a gauge-fixed determinant, a ghost
operator, or a vector-potential path integral.
\end{remark}

\end{document}